\documentclass[11pt,reqno]{amsart}

\usepackage[margin=1.05in]{geometry}
\usepackage{amsmath,amssymb,mathtools,mathrsfs}
\usepackage{microtype}
\usepackage{enumitem}
\usepackage{aliascnt}
\usepackage{xcolor}
\usepackage[colorlinks=true,linkcolor=blue!55!black,citecolor=blue!55!black,urlcolor=blue!55!black]{hyperref}
\hypersetup{
  pdftitle={Sharp nonuniqueness for singular solutions of the one-dimensional periodic cubic NLS},
  pdfauthor={Qirui Peng},
  pdfsubject={Sharp nonuniqueness and singular solutions for periodic cubic NLS},
  pdfkeywords={cubic NLS, non-uniqueness, convex integration, intermittent slab}
}
\usepackage[nameinlink,capitalize,noabbrev]{cleveref}

\allowdisplaybreaks
\numberwithin{equation}{section}
\setlist[enumerate]{leftmargin=2.2em,itemsep=0.3em,topsep=0.4em}
\setlist[itemize]{leftmargin=2.0em,itemsep=0.3em,topsep=0.4em}

\newtheorem{theorem}{Theorem}[section]

\newaliascnt{proposition}{theorem}
\newtheorem{proposition}[proposition]{Proposition}
\aliascntresetthe{proposition}

\newaliascnt{lemma}{theorem}
\newtheorem{lemma}[lemma]{Lemma}
\aliascntresetthe{lemma}

\newaliascnt{corollary}{theorem}

\aliascntresetthe{corollary}

\newaliascnt{claim}{theorem}

\aliascntresetthe{claim}

\theoremstyle{definition}
\newaliascnt{definition}{theorem}
\newtheorem{definition}[definition]{Definition}
\aliascntresetthe{definition}

\newaliascnt{construction}{theorem}

\aliascntresetthe{construction}

\theoremstyle{remark}
\newaliascnt{remark}{theorem}
\newtheorem{remark}[remark]{Remark}
\aliascntresetthe{remark}

\crefname{theorem}{Theorem}{Theorems}
\Crefname{theorem}{Theorem}{Theorems}
\crefname{proposition}{Proposition}{Propositions}
\Crefname{proposition}{Proposition}{Propositions}
\crefname{lemma}{Lemma}{Lemmas}
\Crefname{lemma}{Lemma}{Lemmas}
\crefname{corollary}{Corollary}{Corollaries}
\Crefname{corollary}{Corollary}{Corollaries}
\crefname{claim}{Claim}{Claims}
\Crefname{claim}{Claim}{Claims}
\crefname{definition}{Definition}{Definitions}
\Crefname{definition}{Definition}{Definitions}
\crefname{construction}{Construction}{Constructions}
\Crefname{construction}{Construction}{Constructions}
\crefname{remark}{Remark}{Remarks}
\Crefname{remark}{Remark}{Remarks}

\newcommand{\T}{\mathbb T}
\newcommand{\R}{\mathbb R}
\newcommand{\Z}{\mathbb Z}
\newcommand{\C}{\mathbb C}
\newcommand{\A}{\mathcal A}
\newcommand{\CS}{\mathfrak C}
\newcommand{\N}{\mathcal N}
\newcommand{\supp}{\operatorname{supp}}

\newcommand{\wt}[1]{\langle #1\rangle}
\newcommand{\dd}{\,\mathrm d}
\newcommand{\one}{\mathbf 1}
\newcommand{\eps}{\varepsilon}

\newcommand{\Pnz}{\mathbb P_{\neq 0}}

\def \l {\lambda}

\title[Sharp nonuniqueness for cubic NLS]{Sharp nonuniqueness for singular solutions of the one-dimensional periodic cubic NLS}
\author{Qirui Peng}
\address{Department of Mathematics, University of California, Santa Barbara, Santa Barbara, CA 93106, USA}
\email{qpeng9@ucsb.edu}
\date{August 2026}
\subjclass[2020]{35Q55, 35A02, 35D30, 35B30}
\keywords{cubic nonlinear Schr\"odinger equation, nonuniqueness, convex integration, intermittent slab, singular solution}

\begin{document}

\begin{abstract}
We construct by convex integration a nonzero singular weak solution of the one-dimensional periodic cubic nonlinear Schr\"odinger equation that is compactly supported in time, has zero initial datum, and satisfies
\[
u\in\bigcap_{\alpha<1/6} C_t^0 H_x^\alpha
\cap
\bigcap_{1\leq p<3} C_t^0 L_x^p.
\]
The key new ingredient is a two-carrier perturbation: its zero-output interaction cancels the previous error, without introducing any zero Fourier mode, which is crucial in the cubic NLS. Whenever a singular solution belongs to \(C_t^0L_x^3\), this nonlinearity agrees with the ordinary product \(|u|^2u\). Together with unconditional uniqueness at \(H^{1/6}\), this makes the threshold sharp within the singular solution class considered here.
\end{abstract}

\maketitle

\section{Introduction}\label{sec:intro}

We consider the Cauchy problem for the periodic cubic nonlinear Schr\"odinger equation
\begin{equation}\label{eq:NLS}
 \begin{cases}
  i\partial_tu+\partial_x^2u+\sigma|u|^2u=0
  &\text{on }(0,1)\times\T,\\
  u(0,\cdot)=u_{\mathrm{initial}}
  &\text{on }\T,
 \end{cases}
 \qquad \sigma\in\{-1,1\},
\end{equation}
where $u:[0,1]\times\T\to\C$, $\T=\R/\Z$, and $u_{\mathrm{initial}}$ is the prescribed initial datum.

Equation~\eqref{eq:NLS} is a canonical Hamiltonian envelope model for weakly nonlinear dispersive waves. It arises as a modulation equation for nearly monochromatic gravity waves on deep water \cite{ZakharovWaterWaves} and for optical pulses in dielectric fibers \cite{HasegawaTappertI,HasegawaTappertII}. With the sign convention in \eqref{eq:NLS}, $\sigma=1$ is focusing and $\sigma=-1$ is defocusing. The inverse-scattering theories for the focusing and defocusing equations were established in the classical works of Zakharov and Shabat \cite{ZakharovShabatFocusing,ZakharovShabatDefocusing}. For smooth solutions, both signs conserve the mass and Hamiltonian
\begin{equation}\label{eq:intro-mass-energy}
 M(u)=\int_{\T}|u|^2\,\dd x,
 \qquad
 H_\sigma(u)=\int_{\T}\left(|\partial_xu|^2-\frac{\sigma}{2}|u|^4\right)\dd x.
\end{equation}
Our construction treats both signs simultaneously: $\sigma$ affects only the algebraic choice of one carrier amplitude, while the analytic estimates are identical.

The one-dimensional cubic NLS is mass-subcritical, with scaling index $-1/2$. Its classical $L^2$ theory was established by Tsutsumi on $\R$ and Bourgain on $\T$ \cite{Tsutsumi,Bourgain}. In the periodic setting, Bourgain's Fourier restriction method gives a global solution in $C_t^0L_x^2$, with uniqueness in the auxiliary restriction space used in the construction. Guo, Kwon, and Oh subsequently proved unconditional global well-posedness in $H^\alpha(\T)$ for every $\alpha\geq1/6$ \cite{GKO}. The same exponent is the Sobolev threshold at which the ordinary cubic is automatically a spacetime distribution for every trajectory in $C_t^0H_x^\alpha$:
\[
 H^{1/6}(\T)\hookrightarrow L^3(\T)
 \qquad\Longrightarrow\qquad
 |u|^2u\in C_t^0L_x^1.
\]
The present paper asks how far nonuniqueness can penetrate below this endpoint when the cubic is defined intrinsically through its Fourier interactions rather than assumed a priori to be the ordinary pointwise product.

\begin{theorem}\label{thm:main}
Fix $\sigma\in\{-1,1\}$. There exists a nonzero function $u$ with the following properties:
\begin{enumerate}[label=\textup{(\roman*)}]
 \item $u$ is compactly supported in time in $(0,1)$ and $u(0,\cdot)=0$;
 \item
 \begin{equation}\label{eq:main-regularity}
 u\in\bigcap_{\alpha<1/6}C_t^0H_x^\alpha
 \cap
 \bigcap_{1\leq p<3}C_t^0L_x^p;
 \end{equation}
 \item $u$ solves \eqref{eq:NLS} with $u_{\mathrm{initial}}=0$ in the sense of \cref{def:singular-solution}.
\end{enumerate} 
\end{theorem}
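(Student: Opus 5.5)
The plan is a convex integration scheme built on a cubic NLS--Reynolds system. At step $q$ we seek a pair $(u_q,R_q)$ with $u_q$ smooth in $x$ and $t$, compactly supported in time in $(0,1)$, solving
\[
 i\partial_tu_q+\partial_x^2u_q+\sigma\,\N(u_q)=R_q ,
\]
where $\N$ is the Fourier-interaction cubic of \cref{def:singular-solution}. For smooth $u_q$, $\N$ coincides with $|u_q|^2u_q$. We start from a nonzero $u_0$ supported in a fixed time interval $[1/4,3/4]$ and let $R_0$ be whatever error it produces. The iteration is driven by parameters $\lambda_q\to\infty$ and amplitudes $\delta_q\to0$. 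The inductive hypotheses are:
\begin{itemize}
 \item $\|R_q\|_{C_tL^1_x}\le\delta_{q+1}$, in whatever norm is compatible with the definition;
 \item $\|u_{q+1}-u_q\|_{C_tL^p_x}\lesssim\delta_{q+1}^{1/2}\lambda_{q+1}^{\,1/2-1/p-\epsilon_q}$ (schematic) for all $p<3$;
 \item $\|u_{q+1}-u_q\|_{C_tH^\alpha}\lesssim\lambda_{q+1}^{\alpha-1/6-\epsilon}$;
 \item every time support stays inside $[1/8,7/8]$.
\end{itemize}
Summability in $q$ then gives $u=\lim u_q$ in the space \eqref{eq:main-regularity}. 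Because $u_0\ne0$ and the increments are small, $u$ is nonzero. The time supports give (i). Passing to the limit in the weak formulation, using $R_q\to0$ and the continuity of $\N$ under the interaction convergence guaranteed by the definition, gives (iii).

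The perturbation is $w_{q+1}=w^{(1)}+w^{(2)}+w^{(c)}$. Each carrier has the form $w^{(j)}=a_j(t,x)\,\Phi_j(\lambda_{q+1}x)e^{i\lambda_{q+1}k_jx-i\lambda_{q+1}^2k_j^2t}$, so that it solves the linear Schr\"odinger equation to leading order. Here $\Phi_j$ is an intermittent slab profile with concentration parameter $\mu$, meaning $L^2$-normalized with width $\mu^{-1}\lambda_{q+1}^{-1}$. This gives $\|w\|_{L^p}\sim\mu^{1/2-1/p}$. Choosing $\mu$ so that $L^3$ sits exactly at the critical scaling makes every $L^p$ with $p<3$ and every $H^\alpha$ with $\alpha<1/6$ small, with $1/6$ coming from $H^{1/6}\hookrightarrow L^3$.

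The two frequencies $k_1,k_2$ are chosen so that the cubic interaction $w^{(1)}\overline{w^{(2)}}w^{(2)}$ and its permutations have output frequency zero in the fast variable. The low-frequency part of this product then equals $a_1|a_2|^2$ times a slab average. The amplitudes $a_1,a_2$ are solved algebraically, using the sign $\sigma$ in the choice of one amplitude, so that this zero-output interaction equals $-R_q$ up to a high-frequency remainder. The crucial point is that the carriers never produce a genuine zero Fourier mode: $k_1,k_2\neq0$ and the resonance is cross-carrier. This matters because the singular $\N$ has no well-defined zero mode at this regularity. A corrector $w^{(c)}$ handles the mean or phase drift.

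The new error $R_{q+1}$ splits into several pieces:
\begin{itemize}
 \item a linear error from $i\partial_t a_j+2i\lambda k_j\partial_xa_j+\dots$, inverted with a Schr\"odinger antiderivative gaining $\lambda_{q+1}^{-1}$;
 \item the nonresonant high-frequency cubic terms, inverted with the same operator using the nonzero modulation $\tau-\xi^2$;
 \item cross terms $u_q w$ and $u_q^2w$;
 \item the oscillation remainder.
 \end{itemize}
The main obstacle is the cubic self-interaction of the intermittent slabs. It is not small in $L^1$ directly, since $\|w\|_{L^3}^3\sim1$, so the non-resonant interactions must be shown to have output frequencies of size $\gtrsim\lambda_{q+1}^2$ in the modulation. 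Only then does the inverse of $i\partial_t+\partial_x^2$ gain enough. I would first verify the frequency bookkeeping, namely that every triple other than the designed resonant ones has modulation $\gtrsim\lambda_{q+1}\mu$. Then I would choose $\mu=\lambda_{q+1}^{1-\eta}$ and close the estimates with the losses absorbed by $\epsilon$.
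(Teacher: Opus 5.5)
There is a genuine gap in the core mechanism. You measure the error in $C_tL^1_x$ and try to neutralize $\partial_x^2$ by giving each carrier the Schr\"odinger phase $e^{i(\lambda k_jx-\lambda^2k_j^2t)}$. This is incompatible with cancelling a slowly varying error, because cubic NLS has only trivial resonances:
\[
 \xi_1-\xi_2+\xi_3=0,\qquad \xi_1^2-\xi_2^2+\xi_3^2=0\ \Longrightarrow\ \xi_1\xi_3=0 .
\]
So no product of three nonzero Schr\"odinger carriers has zero output in both $x$ and $t$.

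Concretely, the triple you name, $w^{(1)}\overline{w^{(2)}}w^{(2)}$, sits at spatial carrier $k_1\neq0$. The triple with zero spatial output is $w^{(2)}\overline{w^{(1)}}w^{(2)}$ with $k_1=2k_2$, and it oscillates like $e^{2i\lambda^2k_2^2t}$, so it cannot cancel $R_q$. If you adjust the phases to restore time resonance, at least one carrier acquires a linear error of size $\gtrsim\lambda^2|w|$. That is exactly the $L^1$ obstruction described in the introduction.

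The Schr\"odinger antiderivative does not rescue this. The slab is not an approximate linear solution. On each nonresonant slab mode of its dispersion error, inverting $i\partial_t+\partial_x^2$ simply returns minus that mode of $w$, so there is no gain of $\lambda^{-1}$.

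The idea you are missing is to change the error topology to $\A^{-s}_{x,t}$ with $s>3$, where $\|\partial_x^2w\|_{\A^{-s}_{x,t}}\lesssim\|w\|_{C_tL^1_x}$. The carriers $e^{2\pi iKx}$ and $e^{4\pi iKx}$ can then be time independent, with $a$ constant and $b=-\sigma a^{-2}\overline{E_q}$, so that $w^{(1)}\overline{w^{(2)}}w^{(1)}=-\sigma E_q\Theta^3$ exactly. The remainder $-E_q\Pnz(\Theta^3)$ gains $\lambda^{-s\eps}$. All other pieces of $|w|^2w$ sit at nonzero carriers and gain $K^{-s}$. Only a slow cutoff and $b$ are differentiated in time.

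Two further steps fail as stated.

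First, $\CS_s(u)<\infty$ is a hypothesis of \cref{def:singular-solution}, not something the definition guarantees. It does not follow from $u\in C_tL^p_x$ with $p<3$, and it is exactly what gives $\N(u_q)\to\N(u)$. The paper propagates $\CS_s(u_{q+1})\le\CS_s(u_q)+\|E_q\|_{\A^{-s}_{x,t}}+\gamma_{q+1}$. This uses two ingredients:
\begin{itemize}
 \item disjoint frequency blocks, so that the $\CS_s(u_q)$ are increasing partial sums of $\CS_s(u)$;
 \item $\widehat\Theta\ge0$, so that the absolute sum over zero-output slab triples equals $\int\Theta^3=1$, and the exceptional triad costs exactly $\|E_q\|_{\A^{-s}_{x,t}}$ at zero slab frequency.
\end{itemize}
Nothing in your plan plays this role. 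Also, $\widehat{\N(u)}(0)$ is perfectly well defined when $\CS_s(u)<\infty$. The ``no zero mode'' in the paper means avoiding the spatial constant $A$ left by a shifted cube root $(A+E)^{1/3}$; the zero mode of $E_q$ must be, and is, cancelled.

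Second, as written your parameters cannot reach $1/6$. $\Phi_j(\lambda x)$ is $\lambda^{-1}$-periodic with pulses of width $(\lambda\mu)^{-1}$. With your choice $\mu=\lambda^{1-\eta}$, its spectral width $\lambda\mu=\lambda^{2-\eta}$ swamps the carriers $\lambda k_j$ unless $|k_j|\gtrsim\mu$, so the packets are separated neither from each other nor from $0$. Even with $L^3$ normalization one has $\|w\|_{H^\alpha}\gtrsim(\lambda\mu)^\alpha\mu^{-1/6}$, which decays only for $\alpha<\frac{1-\eta}{6(2-\eta)}<\frac1{12}$. With the $L^2$ normalization you state, $L^p$ norms even grow for $2<p<3$.

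The exponent $1/6$ does not come from $H^{1/6}\hookrightarrow L^3$ alone. It requires a support fraction comparable, up to $\lambda^{O(\eps)}$, to the inverse top frequency. The paper gets this from $\lambda^{\eps}$ pulses of width $\lambda^{-1}$ per period (support fraction $\lambda^{-(1-\eps)}$), spectral width $\nu=\lambda^{1+\eps}$, and carrier $K=16\nu$.
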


\begin{remark}\label{rem:no-mass-conservation}
The solution in \cref{thm:main} belongs to $C_t^0L_x^2$, but at that regularity the singular cubic $\N(u)$ defined by \cref{def:cubic-product} need not agree with the ordinary product $|u|^2u$. In fact, the equation only places $\N(u)$ in $C_t^0H_x^{-s}$ with $s>3$, so the pairing $\langle\N(u),u\rangle$ used in the usual derivation of mass conservation is not defined under the stated assumptions. Thus the standard $L^2$ conservation argument does not apply to this singular solution. 
\end{remark}

\begin{remark}\label{rem:sharpness-of-1/6}
The singular solution $u$ becomes a classical weak solution once $u\in C_t^0L_x^3$, by \cref{lem:compatibility}. Therefore the singular weak solution in the sense of \cref{def:singular-solution} that belongs to $C_t^0H_x^{1/6}$ is unique. Consequently, the threshold $1/6$ is sharp within this singular solution class.
\end{remark}

\subsection{Rough solution theories and generalized nonlinearities}

Below $L^2$, the full periodic equation exhibits markedly different behavior. Molinet proved that for every $s<0$ the periodic flow map fails to be continuous from $H^s(\T)$ to itself at any fixed nonzero time \cite{Molinet}. Guo and Oh proved a stronger non-existence statement for data in $H^s(\T)\setminus L^2(\T)$ when $-1/8<s<0$, in the sense of limits of smooth solutions to the full equation \cite{GuoOh}. Colliander and Oh proved almost sure local well-posedness for the modified cubic NLS for randomized data in $H^\alpha(\T)$ when $\alpha>-1/3$, and almost sure global well-posedness when $\alpha>-1/12$ \cite{CollianderOh}. Exploiting the integrable structure, Oh and Wang proved global well-posedness of the modified periodic equation in $\mathcal F L^p(\T)$ for every finite $p$ \cite{OhWangGWP}; their later infinite normal-form analysis gives an unconditionally globally well-posed normal-form equation for all $1\leq p<\infty$ and unconditional uniqueness for the modified cubic NLS when $1\leq p\leq3/2$ \cite{OhWangNF}.

A different route is to enlarge the notion of solution. Christ introduced a robust Fourier-cutoff definition of nonlinear products and, for the modified periodic cubic NLS, constructed nontrivial generalized solutions with zero initial datum in $C_t^0H_x^\alpha$ for every $\alpha<0$ \cite{Christ}. Guo, Kwon, and Oh subsequently formulated the corresponding Fourier-cutoff solution class for the full periodic cubic equation and constructed canonical solutions in $C_t^0L_x^2$ \cite{GKO}. The singular solution class used here is narrower: the signed cubic interactions are absolutely summable in a weighted Fourier norm. Consequently, the nonlinearity is intrinsic and every admissible Fourier cutoff converges to the same limit. The construction reaches every $\alpha<1/6$, including positive Sobolev regularities and $C_t^0L_x^2$.

The compatibility result in \cref{lem:compatibility} identifies $\N(u)$ with $|u|^2u$ whenever $u\in C_t^0L_x^3$. Combined with $H^{1/6}(\T)\hookrightarrow L^3(\T)$ and the unconditional uniqueness theorem of Guo, Kwon, and Oh \cite{GKO}, this gives the endpoint assertion in \cref{thm:main}. Thus the word ``sharp'' refers specifically to the singular solution class introduced in \cref{def:singular-solution}.

\subsection{Convex integration and the new NLS obstruction}

Convex integration originated in the work of Nash and Kuiper on $C^1$ isometric embeddings and was developed into a general flexibility theory by Gromov \cite{Nash,Kuiper,Gromov}. De Lellis and Sz\'ekelyhidi brought this viewpoint into fluid dynamics \cite{DeLellisSzekelyhidi}; later oscillatory and intermittent schemes led to Isett's proof of the flexible half of Onsager's conjecture \cite{Isett}, the finite-energy Navier--Stokes nonuniqueness theorem of Buckmaster and Vicol \cite{BuckmasterVicol}, and the sharp result of Cheskidov and Luo in $L_t^pL_x^\infty$ for $p<2$ \cite{CheskidovLuo}.

A recent branch of the subject uses convex integration in spaces where the nonlinear product is itself singular, so the solution concept becomes part of the construction. Ashkarian, Bhargava, Gismondi, and Novack constructed intermittent stationary solutions of the two-dimensional Navier--Stokes equations at the sharp $L^2$ threshold \cite{ABGN}. Cheskidov and Hou constructed stationary singular Navier--Stokes solutions in every Besov space of negative smoothness and used them to prove failure of unconditional uniqueness in those spaces \cite{CheskidovHou}. In the dispersive setting, Pathak studied the existence of nontrivial stationary solutions of KdV up to the sharp $L^p$ space \cite{pathak2026nontrivial}.  Gismondi, Ma, Pathak, and Radu constructed nonunique solutions for periodic generalized KdV equations, together with an absolute-Fourier definition of the rough nonlinearity \cite{GMPR}.

The present argument uses the analytic slab from \cite{GMPR} and \cite{pathak2026nontrivial}, but it is not a direct transplantation of the KdV construction. The full cubic NLS presents three linked obstructions. First, the second-order dispersive error is too large in the $L^1$ space of a traditional convex integration scheme. Second, a one-amplitude cancellation results in a nonsmooth complex cubic root at the zeros of the error. Third, shifting that cubic relation away from zero creates a spatial constant. The outer derivative removes this term in KdV, whereas the full NLS retains it. The two-carrier polarization introduced below resolves the last two obstructions, while a negative Wiener space resolves the first.

\subsection{An obstruction to the standard convex integration scheme}

There is a simple scaling obstruction to applying a standard fluid-style convex integration scheme directly to a dispersive equation. We record the cubic version of the heuristic from \cite[Section~1.3]{GMPR}. Consider the relaxed equation 
\[
 i\partial_tu_q+\partial_x^2u_q+\sigma|u_q|^2u_q=E_q.
\]
 Let $\l_q=a^{b^q}$ with $a,b>1$, and suppose the stage-$q$ error has size
\[
 \|E_q \|_{L^1} \lesssim \delta_{q+1} :=\l_{q}^{-3\gamma},
 \qquad \gamma>0.
\]
A conventional cubic perturbation $w_{q+1}$ that cancels an error of size $\delta_{q+1}$ must have endpoint size $\|w_{q+1}\|_{L^3}^3\sim\delta_{q+1}$. If it is concentrated at the fully intermittent scale and frequency localized near $\l_{q+1}$, the natural estimates are
\begin{equation}\label{eq:intro-traditional-scaling}
 \|w_{q+1}\|_{L^1}
 \lesssim
 \delta_{q+1}^{1/3}\l_{q+1}^{-2/3}
 =\l_{q+1}^{-\gamma-2/3},
 \qquad
 \|w_{q+1}\|_{L^2}
 \lesssim
 \l_{q+1}^{-\gamma-1/6}.
\end{equation}
The second estimate would formally give
\[
 \|w_{q+1}\|_{H^\alpha}
 \lesssim
 \l_{q+1}^{\alpha-\gamma-1/6},
\]
so a traditional scheme would appear capable of reaching $\alpha<1/6+\gamma$. However, the dispersive term destroys this apparent gain. If the new error is measured in $L^1$, then
\[
 \|\partial_x^2w_{q+1}\|_{L^1}
 \lesssim
 \l_{q+1}^2\|w_{q+1}\|_{L^1}
 \lesssim
 \l_{q+1}^{4/3-\gamma}. 
\]
To make this no larger than the next error $\delta_{q+2}=\l_{q+2}^{-3\gamma}=\l_{q+1}^{-3b\gamma}$, one would need
\[
 \frac43-\gamma\leq-3b\gamma,
 \qquad\text{equivalently}\qquad
 \gamma\leq\frac{4}{3(1-3b)}<0,
\]
contradicting $\gamma>0$. 

The way around the obstruction is to change the error topology. We measure the residual in an inhomogeneous negative Wiener space $\A^{-s}$ with $s>3$, which is natural in the class of singular solutions (see \cref{{def:singular-solution}}). In this norm, the two derivatives are absorbed by the negative Fourier weight:
\[
 \|\partial_x^2 E_q \|_{\A^{-s}_{x,t}}
 \lesssim_s
 \|E_q \|_{C_t^0L_x^1}.
\]
\subsection{The main idea of the convex integration scheme}
The idea of convex integration is to introduce highly oscillatory terms in order to remove the present error through nonlinear interactions. However, the NLS algebra does not permit a direct one-amplitude cancellation. Naively, if $\rho$ is real, highly oscillatory, and such that $\rho^3 = 1$, then introducing a perturbation $w=a\rho$, where $a$ is the amplitude, produces $|w|^2w=|a|^2a$. Solving $|a|^2a=-\sigma E$ requires the inverse map
\[
 E\longmapsto |E|^{1/3}e^{i\arg(-\sigma E)},
\]
which is not differentiable at the zeros of the complex error $E$. Shifting the cubic relation away from zero by adding a constant $A>0$, as in
\[
(A+E)^{\frac{1}{3}}, \ \ \ A > \|E\|_{L^\infty},
\]
removes that singularity but leaves a spatial constant. In many equations, such as the KdV equations, an outer spatial derivative acting on the full nonlinearity annihilates that constant, while in \eqref{eq:NLS} it survives.

One way to overcome this obstruction is to use a two-carrier complex polarization. For $a>0$, $b\in\C$, and $z=e^{2\pi iKx}$, one has
\begin{equation}\label{eq:intro-polarization}
 |az+bz^2|^2(az+bz^2)
 =a^2\overline b+a(a^2+2|b|^2)z+b(2a^2+|b|^2)z^2+ab^2z^3.
\end{equation}
Given a complex error $E_q$, we take
\begin{equation}\label{eq:intro-b}
 b_{q+1}=-\sigma a_{q+1}^{-2}\overline{E_q}.
\end{equation}
Then $a_{q+1}^2\overline{b_{q+1}}=-\sigma E_q$, so the zero slab mode of the low-output interaction reproduces the complete error, and the nonzero slab modes are small in the Wiener space.

Given $(u_q, E_q)$ solving the relaxed equation, we would like to construct another solution $(u_{q+1},E_{q+1})$ to the same equation:
\[
 i\partial_tu_{q+1}+\partial_x^2u_{q+1}+\sigma|u_{q+1}|^2u_{q+1}=E_{q+1}
\]
with $E_{q+1}$ much smaller than $E_q$ in the negative Wiener norm. To achieve this, we add a high-frequency perturbation to $u_q$, i.e.,
\[
u_{q+1} = u_q + \omega_{q+1}.
\]
The carrier waves alone do not produce an increment that is sufficiently
small in $L^p$ or $H^\alpha$. To make the perturbation small simultaneously in these two spaces, we modulate it by an intermittent profile. 
The analytic building block is adapted from the $L^k$-normalized pulse trains of Gismondi, Ma, Pathak, and Radu \cite{GMPR}. In the cubic case, the profile has
\[
 \|\Theta\|_{L^1}\ll1,
 \qquad
 \|\Theta\|_{L^2}^2\ll1,
 \qquad
 \int_{\T}\Theta^3\,\dd x=1,
\]
and can be chosen with nonnegative Fourier coefficients on a sparse lattice. These features provide both intermittency and error control of low-mode outputs.

The perturbation is therefore given by
\begin{equation}\label{eq:intro-w}
 w_{q+1}
 =h_{q+1}(t)\Theta_{q+1}(x)
 \left(a_{q+1}e^{2\pi iK_{q+1}x}
 +b_{q+1}(t,x)e^{4\pi iK_{q+1}x}\right).
\end{equation}
Here $h_{q+1}$ is a temporal cutoff and $\Theta_{q+1}$ is the intermittent slab mentioned above. If the frequency parameter of $\Theta_{q+1}$ is $\lambda_{q+1}$ and its intermittency exponent is $\eps_{q+1}$, then
\[
 \|w_{q+1}\|_{L^p}
 \lesssim_q
 \lambda_{q+1}^{(1-\eps_{q+1})(1/3-1/p)}.
\]
The implicit constant is fixed after the old stage is frozen, so the increment is small for $p<3$. Its Fourier support lies at frequencies comparable to
\[
 K_{q+1}\simeq \nu_{q+1}=\lambda_{q+1}^{1+\eps_{q+1}},
\]
whereas
\[
 \|\Theta_{q+1}\|_{L^2}
 \lesssim
 \lambda_{q+1}^{-\frac16(1-\eps_{q+1})}.
\]
Consequently,
\[
 \|w_{q+1}\|_{H^\alpha}
 \lesssim_q
 \lambda_{q+1}^{(1+\eps_{q+1})\alpha-\frac16(1-\eps_{q+1})},
\]
which decays whenever
\[
 \alpha<\frac{1-\eps_{q+1}}{6(1+\eps_{q+1})}.
\]
Sending $\eps_{q+1}$ to zero along the iteration reaches every fixed $\alpha<1/6$.

Apart from the error estimate, one must also control the absolute
sum of the Fourier interactions of the cubic product, so that the limiting solution has a well-defined nonlinearity. Recall that
\[
 u_{q+1}=u_q+w_{q+1}.
\]
At each finite stage, the cubic nonlinearity expands as
\[
\begin{aligned}
 |u_{q+1}|^2u_{q+1}
 ={}&|u_q|^2u_q\\
 &+\underbrace{
 w_{q+1}\overline{u_q}u_q
 +u_q\overline{w_{q+1}}u_q
 +u_q\overline{u_q}w_{q+1}
 }_{\text{exactly one new factor}}\\
 &+\underbrace{
 w_{q+1}\overline{w_{q+1}}u_q
 +w_{q+1}\overline{u_q}w_{q+1}
 +u_q\overline{w_{q+1}}w_{q+1}
 }_{\text{exactly two new factors}}\\
 &+\underbrace{
 w_{q+1}\overline{w_{q+1}}w_{q+1}
 }_{\text{three new factors}}.
\end{aligned}
\]
Write the perturbation in \eqref{eq:intro-w} as
\[
 w_{q+1}=w_{q+1}^{(1)}+w_{q+1}^{(2)},
\]
where the packet \(w_{q+1}^{(j)}\) is centered at the carrier
frequency \(jK_{q+1}\), \(j\in\{1,2\}\). More precisely, each of its
frequencies has the form
\[
 jK_{q+1}+\eta+\xi, \qquad \eta\in\mu_{q+1}\mathbb Z,
 \qquad |\xi| \ll \l_{q+1},
\]
where \(\eta\) is a slab frequency of $\Theta_{q+1}$ and \(\xi\) is an old-frequency
modulation of $E_q$. Since the cubic output is \(n_1-n_2+n_3\), every term
with exactly one new factor retains a nonzero carrier
\(\pm jK_{q+1}\) and is therefore high.

With two new factors, the three possible placements have
carrier coefficients
\[
 \begin{array}{c|ccc}
 \text{new slots}
 &(1,2)&(1,3)&(2,3)\\
 \hline
 \text{carrier coefficient}
 &j_1-j_2&j_1+j_3&-j_2+j_3.
 \end{array}
\]
Here, for example, $(1,2)$ stands for the term $\omega_{q+1} \overline{\omega_{q+1}} u_q$. Thus a low carrier occurs only in the placements \((1,2)\) and
\((2,3)\), when the conjugated and nonconjugated factors come from
the same packet. After the carrier cancels, the remaining slab
frequency is \(r=\eta_1-\eta_2\) or
\(r=-\eta_2+\eta_3\). Its zero-frequency contribution costs
\[
 \widehat{\Theta_{q+1}^2}(0)
 =\int_{\T}\Theta_{q+1}^2\,\dd x,
\]
whereas \(r\neq0\) implies \(|r|\geq\mu_{q+1}\) and hence gains the
negative weight \(\langle r\rangle^{-s}\lesssim\mu_{q+1}^{-s}\).

Finally, with three new factors the carrier has Fourier mode
\((j_1-j_2+j_3)K_{q+1}\), and
\[
 j_1-j_2+j_3=0
 \quad\Longleftrightarrow\quad
 (j_1,j_2,j_3)=(1,2,1).
\]
For this special low output, we have 
\[
 w_{q+1}^{(1)}
 \overline{w_{q+1}^{(2)}}
 w_{q+1}^{(1)}
 =-\sigma E_q\Theta_{q+1}^3.
\]
Since
\[
 \widehat{\Theta_{q+1}^3}(0)
 =\int_{\T}\Theta_{q+1}^3\,\dd x=1,
\]
its zero slab frequency contributes exactly
\(\|E_q\|_{\A^{-s}}\), while its nonzero slab frequencies again gain
\(\mu_{q+1}^{-s}\). The non-negativity of the Fourier coefficients of
\(\Theta_{q+1}\) allows these convolution identities to remain valid
after absolute values are inserted term by term.

The rest of the paper is organized as follows. In \cref{sec:prelim} we fix the Fourier and duality conventions, define the weighted Wiener spaces and the singular solution class, and prove compatibility with the ordinary cubic, as well as the Fourier cutoff robustness. In \cref{sec:main-prop} we state the main inductive proposition and derive \cref{thm:main}. The intermittent profile and its Fourier, moment, and carrier-wave properties are developed in \cref{sec:slabs}. Finally, \cref{sec:proof-main-prop} proves the inductive proposition through the two-carrier construction, error estimates, signed absolute-Fourier analysis, exact block argument, and parameter choice.

\section{Preliminaries}\label{sec:prelim}

\subsection{Fourier Series}

For $f\in\mathcal D'(\T)$, we use the Fourier convention
\begin{equation}\label{eq:Fourier-convention}
 \widehat f(n):=\left\langle f,e^{2\pi i n x}\right\rangle,
 \qquad
 f(x)=\sum_{n\in\Z}\widehat f(n)e^{2\pi i n x}
\end{equation}
in the sense of distributions.  The bracket $\langle\cdot,\cdot\rangle$
stands for the standard duality pairing. For functions,
\[
 \langle f,g\rangle
 =\int_{\T}f(x)\overline{g(x)}\,\dd x.
\]
Thus, when $f\in L^1(\T)$,
\begin{equation}\label{eq:Fourier-integral}
 \widehat f(n)=\int_{\T}f(x)e^{-2\pi i n x}\,\dd x.
\end{equation}
For $s>0$, $f\in H_x^{-s}$, and $g\in H_x^s$, we write
\begin{equation}\label{eq:Sobolev-duality}
 \left\langle f,g\right\rangle_{H_x^{-s},H_x^s}
 :=\sum_{n\in\Z}\widehat f(n)\overline{\widehat g(n)}.
\end{equation}
For smooth functions it agrees with
$\int_{\T}f(x)\overline{g(x)}\,\dd x$.

Following Christ and Guo--Kwon--Oh \cite{Christ,GKO}, a \emph{sequence of Fourier cutoff operators} is a sequence $(P_N)_{N\geq1}$ of Fourier multipliers
\begin{equation}\label{eq:Fourier-cutoff}
 \widehat{P_Nf}(n)=m_N(n)\widehat f(n)
\end{equation}
such that each $m_N$ has finite support,
\begin{equation}\label{eq:cutoff-uniform}
 \sup_N\|m_N\|_{\ell^\infty(\Z)}<\infty,
 \qquad
 m_N(n)\longrightarrow1
 \quad\text{for every fixed }n\in\Z.
\end{equation}
We also write $\mathbb P_{\neq0}f=f-\widehat f(0)$ and use $\mathbb P_{\leq N}$ for a standard smooth Fourier projection with symbol $\vartheta(n/N)$, where $\vartheta\in C_c^\infty(\R)$ is even and equals one near the origin.

\subsection{Weighted Wiener spaces}

We introduce the following weighted inhomogeneous Wiener spaces below.  This formulation, including the stronger spacetime norm obtained by summing the $C_t^0$ norms of the Fourier coefficients, follows the setup used for intermittent slabs in \cite{GMPR}; see also \cite{Katznelson} for the classical Wiener algebra.

\begin{definition}[Weighted Wiener spaces]\label{def:Wiener-spaces}
Let $r\in\R$.  Define
\begin{equation}\label{eq:Ar-def}
 \A^r(\T)
 :=\left\{f\in\mathcal D'(\T):
 \|f\|_{\A_x^r}:=
 \sum_{n\in\Z}\wt n^r|\widehat f(n)|<\infty\right\},
 \qquad
 \wt n=(1+n^2)^{1/2}.
\end{equation}
For $f\in C_t^0\mathcal D'_x([0,1]\times\T)$, define
\begin{equation}\label{eq:Ar-xt-def}
 \|f\|_{\A_{x,t}^r}
 :=\sum_{n\in\Z}\wt n^r
 \|\widehat f(\cdot,n)\|_{C_t^0}.
\end{equation}
The corresponding Banach space is denoted by $\A_{x,t}^r([0,1]\times\T)$.
\end{definition}

The norm in \eqref{eq:Ar-xt-def} is stronger than the Bochner norm:
\begin{equation}\label{eq:Wiener-embedding}
 \|f\|_{C_t^0\A_x^r}
 =\sup_{t\in[0,1]}\sum_n\wt n^r|\widehat f(t,n)|
 \leq\|f\|_{\A_{x,t}^r}.
\end{equation}
Since $\ell^1(\Z)\hookrightarrow\ell^2(\Z)$, one also has
\begin{equation}\label{eq:Wiener-Sobolev-embedding}
 \|f\|_{C_t^0H_x^r}
 \leq \|f\|_{C_t^0\A_x^r}
 \leq \|f\|_{\A_{x,t}^r}
 \qquad\text{for every }r\in\R.
\end{equation}
The inhomogeneous weight is essential here, as the zero Fourier mode of the cubic NLS is part of the equation.

We record the elementary Wiener estimates used throughout this paper. The first estimate is the inhomogeneous counterpart of the Kato--Ponce-type weighted Wiener product estimate in \cite[Lemma~2.10]{GMPR}.

\begin{lemma}\label{lem:Wiener}
Let $s\geq0$. We have the following estimates:
\begin{enumerate}[label=\textup{(\roman*)}]
 \item For time-dependent trigonometric polynomials $f,g$,
 \begin{equation}\label{eq:Wiener-product}
  \|fg\|_{\A^{-s}_{x,t}}
  \lesssim_s
  \|f\|_{\A^s_{x,t}}\|g\|_{\A^{-s}_{x,t}}.
 \end{equation}
 \item If $m \geq 0$ and $s>m+1$, then
 \begin{equation}\label{eq:derivative-L1-to-A}
  \|\partial_x^m f\|_{\A^{-s}_{x,t}}
  \lesssim_{s,m}
  \|f\|_{C_t^0L_x^1}.
 \end{equation}
 \item Suppose $K\geq1$, $\ell\in\Z\setminus\{0\}$, and
 \[
  \supp\widehat f(t,\cdot)\subset[-|\ell|K/2,|\ell|K/2]
 \]
 for every $t$.  Then
 \begin{equation}\label{eq:modulation-Wiener}
  \|e^{2\pi i\ell Kx}f\|_{\A^{-s}_{x,t}}
  \lesssim_s
  (|\ell|K)^{-s}\|f\|_{\A^0_{x,t}}.
 \end{equation}
\end{enumerate}
\end{lemma}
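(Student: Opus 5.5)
All three parts reduce to pointwise-in-$n$ bounds on Fourier coefficients, each followed by a sum over $n$. Because the norm $\|\cdot\|_{\A^r_{x,t}}$ puts the $C_t^0$ supremum inside the sum, we estimate $\|\widehat{\cdot}(\cdot,n)\|_{C_t^0}$ first and sum afterwards.

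\textbf{Part (i).} Expand the product as a convolution, $\widehat{fg}(t,n)=\sum_{k}\widehat f(t,k)\widehat g(t,n-k)$. The sums are finite because $f,g$ are trigonometric polynomials. Taking the sup in $t$ of each term gives
\[
\|\widehat{fg}(\cdot,n)\|_{C_t^0}\le\sum_k\|\widehat f(\cdot,k)\|_{C_t^0}\|\widehat g(\cdot,n-k)\|_{C_t^0}.
\]
The only input needed is Peetre's inequality: $\langle n-k\rangle^{s}\le 2^{s/2}\langle k\rangle^{s}\langle n\rangle^{s}$, which rearranges to
\[
\langle n\rangle^{-s}\lesssim_s\langle k\rangle^{s}\langle n-k\rangle^{-s}.
\]
Multiply the coefficient bound by $\langle n\rangle^{-s}$, insert this inequality, and sum over $n$. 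Substituting $m=n-k$ separates the double sum into $\|f\|_{\A^s_{x,t}}\|g\|_{\A^{-s}_{x,t}}$. The case $s=0$ is the classical Wiener algebra inequality and is covered by the same computation.

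\textbf{Part (ii).} For $f\in C_t^0L^1_x$, the Fourier coefficients of the derivative are $(2\pi i n)^m\widehat f(t,n)$. The trivial bound $|\widehat f(t,n)|\le\|f(t)\|_{L^1}$ gives
\[
\|\widehat{\partial_x^m f}(\cdot,n)\|_{C_t^0}\le(2\pi)^m|n|^m\|f\|_{C_t^0L^1_x}.
\]
Multiplying by $\langle n\rangle^{-s}$ and summing leaves $\sum_n\langle n\rangle^{m-s}$, which is finite exactly when $s-m>1$. This is where the hypothesis $s>m+1$ enters. Continuity in $t$ of each coefficient, needed for membership in $C_t^0$, follows from continuity of $t\mapsto f(t)$ in $L^1$.

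\textbf{Part (iii).} Multiplication by $e^{2\pi i\ell Kx}$ shifts the coefficients: $\widehat{e^{2\pi i\ell Kx}f}(t,n)=\widehat f(t,n-\ell K)$. By hypothesis this is nonzero only when $|n-\ell K|\le|\ell|K/2$. On that set the triangle inequality gives $|n|\ge|\ell|K/2$, hence $\langle n\rangle^{-s}\le(|\ell|K/2)^{-s}$. Pulling this factor out of the sum leaves $\sum_n\|\widehat f(\cdot,n-\ell K)\|_{C_t^0}=\|f\|_{\A^0_{x,t}}$, with constant $2^s$. If $\ell K$ is not an integer, the shifted frequencies must be read as lying in $\ell K+\Z$. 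In the intended application $K\in\Z$, so this case does not arise.

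\textbf{Main obstacle.} The only point needing care is part (i): the weight must fall on $f$ with a positive exponent and on $g$ with a negative one. Peetre's inequality in the stated direction does exactly this. Parts (ii) and (iii) are direct computations.
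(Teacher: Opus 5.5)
Your proposal is correct and follows the paper's proof essentially step for step. You use Peetre's inequality inside the convolution formula for (i), the coefficient bound $|\widehat f(t,n)|\le\|f(t)\|_{L^1}$ together with summability of $\sum_n\langle n\rangle^{m-s}$ for (ii), and the support lower bound $|n|\ge|\ell|K/2$ after the frequency shift for (iii). Your added remarks (explicit constants $2^{s/2}$ and $2^s$, and the implicit requirement $\ell K\in\Z$) are accurate refinements of the same argument.
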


\begin{proof}
Peetre's inequality gives
\[
 \wt{n+m}^{-s}\lesssim_s\wt n^s\wt m^{-s}.
\]
Inserting this into the convolution formula for $\widehat{fg}$ proves \eqref{eq:Wiener-product}. For \eqref{eq:derivative-L1-to-A}, use
\[
 \|\widehat f(\cdot,n)\|_{C_t^0}
 \leq\|f\|_{C_t^0L_x^1}
\]
together with the summability of 
$
\sum_n\wt n^{m-s}
$
when $s>m+1$. Finally, the Fourier support of $e^{2\pi i\ell Kx}f$ satisfies
\[
 |\ell K+n|\geq|\ell|K/2.
\]
Therefore
\[
 \|e^{2\pi i\ell Kx}f\|_{\A^{-s}_{x,t}}
 =\sum_n\wt{\ell K+n}^{-s}
 \|\widehat f(\cdot,n)\|_{C_t^0}
 \lesssim_s(|\ell|K)^{-s}\|f\|_{\A^0_{x,t}}.
\]
\end{proof}

\subsection{Classical and singular weak solutions} We need to make sense of the solution $u$ with the regularity considered in this paper. To begin with, we state the ordinary weak formulation of \eqref{eq:NLS}.

\begin{definition} \label{def:classical-weak}
Let $u_{\mathrm{initial}}\in L^1(\T)$.  A function
\[
 u\in C([0,1]; L^1(\T))\cap L_{\mathrm{loc}}^3([0,1)\times\T)
\]
with $u(0)=u_{\mathrm{initial}}$ is a classical weak solution of \eqref{eq:NLS} if, for every
$\varphi\in C_c^\infty([0,1)\times\T;\C)$,
\begin{equation}\label{eq:classical-weak-duality}
 \int_0^1\int_{\T}
 \left[u\big(-i\partial_t\overline\varphi
 +\partial_x^2\overline\varphi\big)
 +\sigma|u|^2u\,\overline\varphi\right]\,\dd x\dd t
 =i\int_{\T}u_{\mathrm{initial}}(x)
 \overline{\varphi(0,x)}\,\dd x.
\end{equation}
\end{definition}

For functions rougher than $L^3(\T)$, the pointwise cubic need not be defined.  We therefore encode the cubic interaction before summing the output frequency.

\begin{definition}[\cite{GMPR}]\label{def:absolute-cubic}
Let $s>0$ and $u_1,u_2,u_3\in C_t^0\mathcal D'_x$.  Define
\begin{equation}\label{eq:Cs-def}
 \begin{aligned}
 \CS_s(u_1,u_2,u_3)
 :=\sum_{n_1,n_2,n_3\in\Z}
 \wt{n_1-n_2+n_3}^{-s}
 \left\|
 \widehat u_1(\cdot,n_1)
 \overline{\widehat u_2(\cdot,n_2)}
 \widehat u_3(\cdot,n_3)
 \right\|_{C_t^0}.
 \end{aligned}
\end{equation}
We write $\CS_s(u):=\CS_s(u,u,u)$.
\end{definition}

\begin{definition}\label{def:cubic-product}
Suppose $\CS_s(u)<\infty$ for some $s>0$.  For every $n\in\Z$, define
\begin{equation}\label{eq:N-Fourier}
 \widehat{\N(u)}(t,n)
 :=\sum_{n_1-n_2+n_3=n}
 \widehat u(t,n_1)
 \overline{\widehat u(t,n_2)}
 \widehat u(t,n_3).
\end{equation}
Then the series converges absolutely in $C_t^0$ for each output frequency and
\begin{equation}\label{eq:N-A-bound}
 \|\N(u)\|_{\A^{-s}_{x,t}}\leq\CS_s(u).
\end{equation}
In particular, \eqref{eq:Wiener-Sobolev-embedding} gives
\begin{equation}\label{eq:N-Hminus-bound}
 \|\N(u)\|_{C_t^0H_x^{-s}}
 \leq \|\N(u)\|_{\A^{-s}_{x,t}}
 \leq \CS_s(u),
\end{equation}
so the $H_x^{-s}$--$H_x^s$ duality pairing of $\N(u)$ against a smooth test function is well defined.
\end{definition}

\begin{definition}\label{def:singular-solution}
Fix $s>0$ and let $u_{\mathrm{initial}}\in H_x^{-s}$.  A function
$u\in C([0,1];H_x^{-s})$ is a singular weak solution of \eqref{eq:NLS} with initial datum $u_{\mathrm{initial}}$ if $\CS_s(u)<\infty$ and if
\begin{equation}\label{eq:singular-weak-duality}
 \begin{aligned}
 &\int_0^1
 \left\langle
 u(t),i\partial_t\varphi(t)+\partial_x^2\varphi(t)
 \right\rangle_{H_x^{-s},H_x^s}\,\dd t\\
 &\qquad
 +\sigma\int_0^1
 \left\langle\N(u)(t),\varphi(t)\right\rangle_{H_x^{-s},H_x^s}\,\dd t
 =i\left\langle u_{\mathrm{initial}},\varphi(0)\right\rangle_{H_x^{-s},H_x^s}
 \end{aligned}
\end{equation}
for every $\varphi\in C_c^\infty([0,1)\times\T;\C)$. 
\end{definition}

The terminology is adapted from the absolute Fourier summability framework of \cite{GMPR}.  It is stronger than prescribing one particular approximation of the cubic term.  The following compatibility statement is the only comparison with the ordinary cubic needed for the sharpness assertion.

\begin{lemma}\label{lem:compatibility}
Suppose $u\in C_t^0L_x^3$ and $\CS_s(u)<\infty$ for some $s>0$.  Then
\begin{equation}\label{eq:compatibility}
 \N(u)=|u|^2u
\end{equation}
in $C_t^0\mathcal D'(\T)$.  Consequently, every singular weak solution that also belongs to $C_t^0L_x^3$ is a classical weak solution in the sense of \cref{def:classical-weak}.
\end{lemma}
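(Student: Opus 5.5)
We prove \eqref{eq:compatibility} by approximation, fixing $t$ and comparing Fourier coefficients. Let $\mathbb P_{\leq N}$ be the smooth Fourier projection from the preliminaries and set $u_N:=\mathbb P_{\leq N}u$. Then $u_N(t)$ is a trigonometric polynomial. Its cubic is a finite sum, so
\[
 \widehat{|u_N|^2u_N}(t,n)=\sum_{n_1-n_2+n_3=n}
 \vartheta(n_1/N)\vartheta(n_2/N)\vartheta(n_3/N)\,
 \widehat u(t,n_1)\overline{\widehat u(t,n_2)}\widehat u(t,n_3).
\]
We show that both sides of this identity converge to the corresponding sides of \eqref{eq:compatibility}, uniformly in $t$ for each fixed $n$.

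For the right-hand side, fix $n$. Each summand tends to the corresponding summand in \eqref{eq:N-Fourier}, since $\vartheta(k/N)\to1$. The summands are dominated by $\|\widehat u(\cdot,n_1)\overline{\widehat u(\cdot,n_2)}\widehat u(\cdot,n_3)\|_{C_t^0}$, using $\|\vartheta\|_\infty<\infty$. This dominating family is summable over $n_1-n_2+n_3=n$, because $\CS_s(u)<\infty$ and the weight $\wt n^{-s}$ is a fixed positive constant for fixed $n$. Dominated convergence then gives $\widehat{|u_N|^2u_N}(t,n)\to\widehat{\N(u)}(t,n)$ uniformly in $t\in[0,1]$: the tail of the dominating series is uniformly small, and the finitely many remaining terms converge uniformly.

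For the left-hand side, we use that $\mathbb P_{\leq N}$ is convolution with a kernel $\check\vartheta_N$ satisfying $\sup_N\|\check\vartheta_N\|_{L^1(\T)}<\infty$; this is the standard de la Vallée-Poussin-type bound for smooth symbols. Such convolutions form an approximate identity. Therefore $u_N(t)\to u(t)$ in $L^3$ for each $t$. Moreover, $\{u(t):t\in[0,1]\}$ is compact in $L^3$ because $u\in C_t^0L^3_x$, so the convergence is uniform in $t$. Write
\[
 |a|^2a-|b|^2b=O\big((|a|^2+|b|^2)|a-b|\big).
\]
Hölder's inequality then gives $\||u_N|^2u_N-|u|^2u\|_{L^1}\lesssim(\|u_N\|_{L^3}^2+\|u\|_{L^3}^2)\|u_N-u\|_{L^3}\to0$, uniformly in $t$. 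Consequently the Fourier coefficients of $|u_N|^2u_N$ converge to those of $|u|^2u$, uniformly in $t$. Comparing the two limits gives $\widehat{\N(u)}(t,n)=\widehat{|u|^2u}(t,n)$ for all $n$ and $t$. Hence $\N(u)=|u|^2u$ as elements of $C_t^0\mathcal D'$; in fact $|u|^2u\in C_t^0L^1_x$.

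For the consequence, take a singular weak solution $u\in C_t^0L^3_x$. Then $u\in C_t^0L^1_x\cap L^3_{\mathrm{loc}}$. Because $u(t)$ and $\N(u)(t)=|u|^2u(t)$ lie in $L^1$, each $H^{-s}$–$H^s$ pairing in \eqref{eq:singular-weak-duality} equals the corresponding integral: for an $L^1$ function the coefficient sum $\sum_n\widehat f(n)\overline{\widehat\varphi(n)}$ converges absolutely to $\int f\overline\varphi$, since $\widehat\varphi$ decays rapidly. Substituting these integrals into \eqref{eq:singular-weak-duality} and conjugating (or matching the sign conventions) yields \eqref{eq:classical-weak-duality}. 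The initial datum also matches, since $u(0)=u_{\mathrm{initial}}\in L^3\subset L^1$.

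The main obstacle is the uniform $L^1$ bound for the smooth cutoff kernel, which is what makes $u_N\to u$ in $L^3$. Sharp Dirichlet truncation would fail here, since its kernel is not bounded in $L^1$. We resolve this by choosing $\vartheta\in C_c^\infty$: Poisson summation then shows $\check\vartheta_N$ is the periodization of $N\check\vartheta(N\cdot)$, whose $L^1$ norm is bounded uniformly in $N$.
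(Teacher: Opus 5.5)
Your proposal is correct and follows the paper's proof essentially step for step. You use the smooth truncation $\mathbb P_{\leq N}$, apply dominated convergence over the absolutely summable interaction series to get $|u_N|^2u_N\to\N(u)$, obtain uniform-in-time $L^3$ convergence $u_N\to u$ from uniform boundedness plus compactness of $\{u(t)\}$ in $L^3$, use the cubic difference estimate in $L^1$, and compare the two limits. The differences are only cosmetic: you identify the limits coefficientwise, uniformly in $t$, rather than in $\A^{-s}_{x,t}$, and you spell out the kernel bound and the pairing-equals-integral step that the paper leaves implicit. The identity $u(0)=u_{\mathrm{initial}}$, which you assert without proof, follows from the weak formulation by testing with $\chi(t/\epsilon)\psi(x)$ and using continuity in time; the paper likewise leaves this implicit.
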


\begin{proof}
Let $P_N=\mathbb P_{\leq N}$ be a standard smooth Fourier truncation, with symbol $m_N$.  By the triangle inequality in output frequency,
\begin{align*}
 &\big\||P_Nu|^2P_Nu-\N(u)\big\|_{\A^{-s}_{x,t}}
 \leq  \sum_{n_1,n_2,n_3\in\Z}
 \wt{n_1-n_2+n_3}^{-s}
 \big|m_N(n_1)\overline{m_N(n_2)}m_N(n_3)-1\big|\\
 &\hspace{3.2cm}\times
 \left\|
 \widehat u(\cdot,n_1)
 \overline{\widehat u(\cdot,n_2)}
 \widehat u(\cdot,n_3)
 \right\|_{C_t^0}.
\end{align*}
The multiplier factor converges pointwise to zero and is uniformly bounded.  Since $\CS_s(u)<\infty$, dominated convergence yields
\begin{equation}\label{eq:standard-cutoff-to-N}
 |P_Nu|^2P_Nu\longrightarrow\N(u)
 \qquad\text{in }\A^{-s}_{x,t}.
\end{equation}

The operators $P_N$ are uniformly bounded on $L^3(\T)$ and converge strongly to the identity.  Since the image of the continuous map $t\mapsto u(t)$ is compact in $L^3(\T)$, the convergence is uniform in time, and therefore
\[
 P_Nu\longrightarrow u
 \qquad\text{in }C_t^0L_x^3.
\]
Using
\[
 \big\||f|^2f-|g|^2g\big\|_{L^1}
 \lesssim
 \big(\|f\|_{L^3}^2+\|g\|_{L^3}^2\big)\|f-g\|_{L^3},
\]
we obtain
\[
 |P_Nu|^2P_Nu\longrightarrow|u|^2u
 \qquad\text{in }C_t^0L_x^1.
\]
Comparing this limit with \eqref{eq:standard-cutoff-to-N} gives \eqref{eq:compatibility} in $C_t^0\mathcal D'(\T)$.  Substituting \eqref{eq:compatibility} into \eqref{eq:singular-weak-duality} gives \eqref{eq:classical-weak-duality}.
\end{proof}

For completeness, we also record that the signed absolute-Fourier product is robust under arbitrary admissible Fourier cutoffs in the sense used by Christ and Guo--Kwon--Oh \cite{Christ,GKO}.

\begin{proposition}\label{prop:cutoff-independence}
Suppose $\CS_s(u)<\infty$ for some $s>0$, and let $(P_N)$ be any sequence of Fourier cutoff operators satisfying \eqref{eq:Fourier-cutoff}--\eqref{eq:cutoff-uniform}.  Then
\begin{equation}\label{eq:cutoff-convergence}
 |P_Nu|^2P_Nu\longrightarrow\N(u)
 \qquad\text{in }\A^{-s}_{x,t}.
\end{equation}
In particular, the limit exists and is independent of the cutoff sequence.
\end{proposition}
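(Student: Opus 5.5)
The plan is to repeat the first half of the proof of \cref{lem:compatibility} with $m_N$ replaced by an arbitrary admissible symbol. First I would write the Fourier coefficients of the truncated cubic. Since each $m_N$ has finite support, $P_Nu$ is, for each $t$, a trigonometric polynomial, so $|P_Nu|^2P_Nu$ is a finite sum. Its $n$-th coefficient is
\[
 \sum_{n_1-n_2+n_3=n} m_N(n_1)\overline{m_N(n_2)}m_N(n_3)\,
 \widehat u(t,n_1)\overline{\widehat u(t,n_2)}\widehat u(t,n_3).
\]
Note that $\overline{P_Nu}$ has coefficients $\overline{m_N(n_2)}\,\overline{\widehat u(t,n_2)}$, so the conjugate falls on the symbol as well. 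Subtracting \eqref{eq:N-Fourier} output frequency by output frequency, and using the triangle inequality in $C_t^0$ and then in $n$, gives
\[
 \big\||P_Nu|^2P_Nu-\N(u)\big\|_{\A^{-s}_{x,t}}
 \le \sum_{n_1,n_2,n_3}\wt{n_1-n_2+n_3}^{-s}\,
 \big|m_N(n_1)\overline{m_N(n_2)}m_N(n_3)-1\big|\,
 \big\|\widehat u(\cdot,n_1)\overline{\widehat u(\cdot,n_2)}\widehat u(\cdot,n_3)\big\|_{C_t^0}.
\]
The symbols are constant in time, so they can be pulled out of the $C_t^0$ norm.

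Next I would apply dominated convergence on $\Z^3$ with counting measure. Set $M:=\sup_N\|m_N\|_{\ell^\infty}<\infty$, which is finite by \eqref{eq:cutoff-uniform}. The multiplier factor is then bounded by $M^3+1$. Hence the summand is dominated by $(M^3+1)$ times the summand of $\CS_s(u)$, and that sum is finite by hypothesis. For each fixed triple $(n_1,n_2,n_3)$, the pointwise convergence $m_N(n)\to1$ gives $m_N(n_1)\overline{m_N(n_2)}m_N(n_3)\to1$, so each summand tends to zero. Dominated convergence therefore yields \eqref{eq:cutoff-convergence}.

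Independence of the cutoff sequence is then immediate, because the limit $\N(u)$ is defined in \cref{def:cubic-product} without reference to any cutoff. There is no real obstacle here. The only points needing care are that the symbols $m_N$ may be complex, which is why the conjugate appears on $m_N(n_2)$, and that no uniformity of the convergence $m_N\to1$ is needed. The absolute summability built into $\CS_s(u)$ replaces any such uniformity.
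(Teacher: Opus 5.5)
Your proposal is correct and is essentially the paper's own proof. Like the paper, you attach the multiplier $m_N(n_1)\overline{m_N(n_2)}m_N(n_3)$ to each triple and bound the difference in $\A^{-s}_{x,t}$ termwise. You then apply dominated convergence with a constant multiple of the summable series defining $\CS_s(u)$ as the majorant, spelling out the triangle-inequality step the paper only cites from the proof of \cref{lem:compatibility}.
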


\begin{proof}
As in the argument leading to \eqref{eq:standard-cutoff-to-N}, note that for each triple $(n_1,n_2,n_3)$, the multiplier attached to the truncated cubic is
\[
 m_N(n_1)\overline{m_N(n_2)}m_N(n_3),
\]
which converges to one and is uniformly bounded.  After multiplication by the output weight, the corresponding summand is dominated by a fixed constant times the summable series in \eqref{eq:Cs-def}.  Dominated convergence proves \eqref{eq:cutoff-convergence}.
\end{proof}

\section{The main inductive proposition and proof of the theorem}\label{sec:main-prop}

Fix throughout
\begin{equation}\label{eq:s-beta}
 s>3,
 \qquad
 0<\beta<\frac13.
\end{equation}
For $q\geq0$, we let
\begin{equation}\label{eq:diagonal-parameters}
 \eps_{q+1}=2^{-(q+1)},
 \qquad
 \alpha_{q+1}=\frac16-\eps_{q+1},
 \qquad
 p_{q+1}=3-\eps_{q+1},
\end{equation}
and
\begin{equation}\label{eq:budgets}
 \delta_q=2^{-q-200},
 \qquad
 \gamma_{q+1}=2^{-q-30}.
\end{equation}

The convex integration scheme is based on the relaxed equation
\begin{equation}\label{eq:relaxed-main-prop}
 i\partial_tu_q+\partial_x^2u_q+\sigma|u_q|^2u_q=E_q.
\end{equation}
We say a pair $(u_q,E_q)$ solves \eqref{eq:relaxed-main-prop} at stage $q$ when this identity holds.

\begin{proposition}[Main inductive proposition]\label{prop:main-inductive}
There exist a family of smooth functions $\{(u_q,E_q)\}_{q \geq 0}$ on $[0,1]\times\T$, integers $M_q\geq1$, compact intervals $I_q\Subset(0,1)$, and, for every $q\geq0$, a dyadic number $\lambda_{q+1}$ and an increment
\[
 w_{q+1}:=u_{q+1}-u_q
\]
such that the following properties hold.

\begin{enumerate}[label=\textup{(\roman*)}]
\item The pair $(u_q,E_q)$ solves the equation \eqref{eq:relaxed-main-prop} at stage $q$.

\item The time intervals are given explicitly by
\begin{equation}\label{eq:Iq-explicit}
 I_q
 :=\left[
 \frac12-\frac1{16}-\sum_{j=1}^{q}\lambda_j^{-\beta},
 \frac12+\frac1{16}+\sum_{j=1}^{q}\lambda_j^{-\beta}
 \right],
\end{equation}
where the sum is understood to be empty when $q=0$.  One has
\[
 \supp_tu_q\cup\supp_tE_q\subset I_q,
\]
and
\begin{equation}\label{eq:time-sum}
 \sum_{j=1}^{\infty}\lambda_j^{-\beta}<\frac1{16}.
\end{equation}
Consequently,
\[
 I_q\subset\left(\frac38,\frac58\right)
 \Subset\left(\frac14,\frac34\right)
 \qquad\text{for every }q\geq0.
\]

\item The exponent of $\lambda_{q+1}$ as a power of two is divisible by $2^{q+1}$.  Hence
\begin{equation}\label{eq:mu-nu-K}
 \mu_{q+1}=\lambda_{q+1}^{\eps_{q+1}},
 \qquad
 \nu_{q+1}=\lambda_{q+1}^{1+\eps_{q+1}},
 \qquad
 K_{q+1}=16\nu_{q+1}
\end{equation}
are positive integers.  Moreover,
\begin{equation}\label{eq:old-new-separation-main}
 \mu_{q+1}>100M_q,
 \qquad
 M_{q+1}=5K_{q+1},
\end{equation}
and
\begin{equation}\label{eq:uq-Eq-support}
 \supp\widehat{u_q}(t,\cdot)\cup\supp\widehat{E_q}(t,\cdot)
 \subset[-M_q,M_q]
\end{equation}
for every $t$.  The increment satisfies
\begin{equation}\label{eq:exact-block}
 \begin{aligned}
 \supp\widehat w_{q+1}(t,\cdot)
 \subset B_{q+1}:={}&
 \{K_{q+1}+\eta:\ \eta\in\mu_{q+1}\Z,\ |\eta|\leq2\nu_{q+1}\}\\
 &\cup
 \{2K_{q+1}+\eta+\xi:\ \eta\in\mu_{q+1}\Z,\ |\eta|\leq2\nu_{q+1},\ |\xi|\leq M_q\}.
 \end{aligned}
\end{equation}
In particular,
\begin{equation}\label{eq:block-annulus}
 B_{q+1}\subset\{n\in\Z:2M_q<n<M_{q+1}/2\},
\end{equation}
so the blocks $B_{q+1}$ are pairwise disjoint.

\item The perturbation satisfies the following smallness estimates:
\begin{equation}\label{eq:small-increments-main}
 \|w_{q+1}\|_{C_t^0H_x^{\alpha_{q+1}}}
 +\|w_{q+1}\|_{C_t^0L_x^{p_{q+1}}}
 <2^{-q-10},
\end{equation}
and, with $m_0:=\|u_0\|_{C_t^0L_x^1}>0$,
\begin{equation}\label{eq:L1-budget-main}
 \|w_{q+1}\|_{C_t^0L_x^1}<2^{-q-3}m_0.
\end{equation}

\item The error $E_q$ is bounded by
\begin{equation}\label{eq:error-budget-main}
 \|E_q\|_{\A^{-s}_{x,t}}\leq\delta_q.
\end{equation}

\item The nonlinearity satisfies the following Fourier bound:
\begin{equation}\label{eq:CS-induction-main}
 \CS_s(u_q)
 \leq
 \CS_s(u_0)
 +\sum_{j=0}^{q-1}(\delta_j+\gamma_{j+1}).
\end{equation}

\item The following nontriviality condition holds at each stage: 
\begin{equation}\label{eq:nontriviality-main}
 \|u_q-u_0\|_{C_t^0L_x^1}<\frac12m_0.
\end{equation}
\end{enumerate}
\end{proposition}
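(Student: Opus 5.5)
The proof goes by induction on $q$: first a base pair $(u_0,E_0)$, then the passage from stage $q$ to stage $q+1$ using the two-carrier perturbation \eqref{eq:intro-w}.

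\textbf{Base case.} Choose $u_0 = \delta\,\chi(t)e^{2\pi i x}$, where $\chi\in C_c^\infty$ is supported in $[\frac12-\frac1{16},\frac12+\frac1{16}]$ and $\delta>0$ is small. Define $E_0$ by \eqref{eq:relaxed-main-prop}. Since $|u_0|^2u_0=\delta^3\chi^3e^{2\pi ix}$, the Fourier support of $u_0$ and $E_0$ lies in $\{1\}\subset[-1,1]$, so we may take $M_0=1$. For $\delta$ small (depending on $\chi$), we have $\|E_0\|_{\A^{-s}_{x,t}}\le\delta_0$, and $m_0=\delta\|\chi\|_{L^\infty}>0$. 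The nonlinear bound (vi) is trivial at $q=0$.

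\textbf{Inductive step: construction.} Given stage $q$, pick $\lambda_{q+1}=2^{k2^{q+1}}$ with $k$ huge, so that $\mu_{q+1},\nu_{q+1},K_{q+1}$ are integers, $\mu_{q+1}>100M_q$, and \eqref{eq:time-sum} holds. Let $\Theta_{q+1}$ be the intermittent slab of \cref{sec:slabs}. It has nonnegative Fourier coefficients supported in $\mu_{q+1}\Z\cap[-2\nu_{q+1},2\nu_{q+1}]$, satisfies $\int\Theta^3=1$, and $\|\Theta\|_{L^p}\lesssim\lambda^{(1-\eps)(1/3-1/p)}$. Choose a constant $a_{q+1}>0$ and $b_{q+1}=-\sigma a^{-2}\overline{E_q}$. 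Let $h_{q+1}$ be a time cutoff equal to $1$ on $\supp_tE_q$ and supported in $I_{q+1}$, with $|h'|\lesssim\lambda_{q+1}^{\beta}$.

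\begin{itemize}
\item \emph{Support.} Since $\widehat{E_q}$ lives in $[-M_q,M_q]$, the Fourier support condition \eqref{eq:exact-block} is immediate. The annulus bound \eqref{eq:block-annulus} follows from $2\nu_{q+1}+M_q\ll K_{q+1}$.
\item \emph{Size.} The estimates \eqref{eq:small-increments-main} and \eqref{eq:L1-budget-main} follow from Hölder with the slab $L^p$ bounds, as in the heuristic of \cref{sec:intro}. The frequency factor is $\nu_{q+1}^{\alpha}\|\Theta\|_{L^2}$, and the exponent $\alpha_{q+1}=\frac16-\eps_{q+1}$ lies strictly below $(1-\eps)/(6(1+\eps))$. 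All constants depend only on the frozen stage, so taking $\lambda_{q+1}$ large makes these norms small.
\item \emph{Nontriviality.} Property (vii) follows from summing \eqref{eq:L1-budget-main}.
\end{itemize}

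\textbf{Inductive step: the new error.} Set $u_{q+1}=u_q+w_{q+1}$ and
\[
E_{q+1}=E_q+i\partial_tw+\partial_x^2w+\sigma(|u_{q+1}|^2u_{q+1}-|u_q|^2u_q),
\]
and expand the cubic difference as in the introduction.

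\begin{itemize}
\item \emph{Linear terms.} $i\partial_tw$ and $\partial_x^2w$ are supported at frequencies $\gtrsim K_{q+1}$. By \cref{lem:Wiener}(iii) their $\A^{-s}$ norm is at most $K^{-s}\cdot K^2\cdot\lambda^{\beta}\cdot\|w\|_{\A^0}$. Since $\|\Theta\|_{\A^0}\lesssim \#\text{modes}\cdot\max|\widehat\Theta|$ is polynomial in $\lambda$, this is negligible because $s>3$.
\item \emph{Terms with one new factor.} These keep a nonzero carrier $\pm jK$, so the same lemma applies.
\item \emph{Terms with two new factors.} Only placements with matching packets have low carrier. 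Their zero slab mode costs $\int\Theta^2\lesssim\lambda^{-(1-\eps)/3}$, which is small. Their nonzero slab modes gain $\mu^{-s}$.
\item \emph{Terms with three new factors.} The combination $(1,2,1)$ gives $-\sigma E_q h^3\Theta^3$. Since $h=1$ on $\supp E_q$ and $\widehat{\Theta^3}(0)=1$, its zero slab mode cancels $E_q$ exactly. The nonzero slab modes $\eta\neq0$ satisfy $|\eta|\ge\mu\gg M_q$, giving a factor $\mu^{-s}$ times a polynomial loss. All other carrier combinations are high.
\end{itemize}

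Choosing $\lambda_{q+1}$ large then gives $\|E_{q+1}\|_{\A^{-s}}\le\delta_{q+1}$. One must also check that the polynomial losses from $\|\Theta\|_{\A^0}^3$ are beaten by $\mu^{-s}=\lambda^{-\eps s}$. I expect this to be the main obstacle, since $\eps_{q+1}$ shrinks. The plan is to ensure the slab's Fourier coefficients satisfy $\sum_\eta\widehat\Theta(\eta)\lesssim\lambda^{C\eps}$ or better, using that the slab is a normalized pulse train. Failing this, I would exploit the nonnegativity so that absolute sums equal actual values of $\Theta^2,\Theta^3$ at $x=0$. Those values are bounded by $\|\Theta\|_{L^\infty}^3$, which is polynomial in $\lambda$, and I would then tune the exponents.

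\textbf{Inductive step: the nonlinear bound (vi).} Write $\CS_s(u_{q+1})=\CS_s(u_q)+(\text{triples involving at least one }w)$. This requires signed absolute-Fourier analysis. By the block disjointness \eqref{eq:block-annulus}, the triples containing new factors are exactly those in the expansion above. The nonnegativity of $\widehat\Theta$ lets the absolute sums be computed as in the error estimate. The zero-output $(1,2,1)$ interaction contributes at most $\|E_q\|_{\A^{-s}}\le\delta_q$, and everything else is at most $\gamma_{q+1}$ for $\lambda$ large. This yields \eqref{eq:CS-induction-main} and completes the induction.
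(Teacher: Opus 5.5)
\textbf{There is a genuine gap, at exactly the step you flag as the main obstacle, and neither remedy closes it.} Much of your outline is fine: the seed, the two-carrier perturbation with $b_{q+1}=-\sigma a_{q+1}^{-2}\overline{E_q}$, the support and size bounds, and the carrier bookkeeping. Your treatment of $\partial_x^2w_{q+1}$ and $\partial_tw_{q+1}$ via frequency localization near $K_{q+1}$ differs from the paper's $L^1$ route \eqref{eq:derivative-L1-to-A}, but it works, since $K_{q+1}^{2-s}\lambda_{q+1}^{(1-\eps)/3}\to0$ for $s>3$.

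The problematic terms are the low-carrier terms with nonzero slab frequency $r$:
\begin{itemize}
\item $E_q(1-\Theta_{q+1}^3)=-E_q\Pnz(\Theta_{q+1}^3)$ in the new error;
\item the $r\neq0$ part of the $(1,2,1)$ interaction in $\CS_s(u_{q+1})$;
\item the $r\neq0$ part of the same-packet $(1,2)$ and $(2,3)$ pairs.
\end{itemize}
These gain only $\wt r^{-s}\le\mu_{q+1}^{-s}=\lambda_{q+1}^{-s\eps_{q+1}}$. If you pay for the sum over $r$ with the $\ell^1$ Fourier mass, you get $\mu^{-s}\|\Theta^3\|_{\A^0}=\mu^{-s}\Theta(0)^3\sim\lambda^{1-\eps-s\eps}$. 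This diverges once $\eps_{q+1}<1/(s+1)$.

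Your first remedy is impossible. Indeed $\sum_\eta\widehat\Theta(\eta)=\|\Theta\|_{\A^0}\ge\|\Theta\|_{L^\infty}$, and $1=\int\Theta^3\le\|\Theta\|_{L^\infty}\|\Theta\|_{L^2}^2$ forces $\|\Theta\|_{L^\infty}\gtrsim\lambda^{(1-\eps)/3}$. So no bound of the form $\lambda^{C\eps}$ with $3C<s$ can hold for small $\eps$. Your second remedy just reproduces $\lambda^{1-\eps-s\eps}$, and there is nothing to tune. Here $s$ is fixed in \eqref{eq:s-beta}, while $\eps_{q+1}=2^{-(q+1)}\to0$ is forced by the statement: it defines $\mu_{q+1}$ and $B_{q+1}$ and is needed to reach $\alpha_{q+1}$. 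As written, (v) and (vi) fail at all sufficiently late stages.

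\textbf{The missing idea is to control these coefficients in $\ell^\infty$ rather than $\ell^1$, using the lattice support.} Since $\Theta^j$ is Fourier supported in $\mu\Z$ and $|\widehat{\Theta^j}(n)|\le\|\Theta\|_{L^j}^j$, you get, for $s>1$,
\[
\|\Pnz(\Theta^j)\|_{\A^{-s}_x}\le\|\Theta\|_{L^j}^j\sum_{k\neq0}\wt{\mu k}^{-s}\lesssim_s\lambda^{(1-\eps)(\frac j3-1)}\mu^{-s}.
\]
This is \eqref{eq:Theta-Pneq}; for $j=3$ it has no loss at all. With it, the product estimate gives $\|E_q\Pnz(\Theta^3)\|_{\A^{-s}_{x,t}}\lesssim\|E_q\|_{\A^s_{x,t}}\mu^{-s}\le C_q\mu^{-s}$.

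In $\CS_s$, nonnegativity and evenness of $\widehat\Theta$ make the absolute sum over $\eta_1-\eta_2+\eta_3=r$ exactly $\widehat{\Theta^3}(r)$, and $\widehat{\Theta^2}(r)$ for the same-packet pairs. Then $\wt{r+k}^{-s}\lesssim\wt r^{-s}$ for $|r|\ge\mu_{q+1}>100M_q$ reduces everything to the display above. Only the genuinely high-carrier terms, which gain $K_{q+1}^{-s}$, can afford the polynomial $\ell^1$ losses you describe.
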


\begin{remark}\label{rem:fixed-epsilon}
For one fixed pair $\alpha<1/6$ and $p<3$, one may keep a sufficiently small dyadic rational intermittency exponent $\eps>0$ fixed, provided
\[
 \alpha<\frac{1-\eps}{6(1+\eps)}.
\]
This gives a simpler construction in $C_t^0H_x^\alpha\cap C_t^0L_x^p$. The diagonal choice $\eps_q\downarrow0$ is needed only to obtain the full intersections in \eqref{eq:main-regularity} from a single solution.
\end{remark}

\subsection{Proof of the main theorem from the inductive proposition}

\begin{proof}
Fix $\alpha<1/6$.  Since $\alpha_{q+1}\uparrow1/6$, there exists $q_\alpha$ such that $\alpha<\alpha_{q+1}$ for every $q\geq q_\alpha$.  The monotonicity of the inhomogeneous Sobolev norms and \eqref{eq:small-increments-main} give
\[
 \sum_{q\geq q_\alpha}
 \|w_{q+1}\|_{C_t^0H_x^\alpha}
 \leq
 \sum_{q\geq q_\alpha}
 \|w_{q+1}\|_{C_t^0H_x^{\alpha_{q+1}}}
 <\infty.
\]
The first finitely many increments are smooth, so they belong to $C_t^0H_x^\alpha$ for every $\alpha$.  Hence $u_q$ converges in $C_t^0H_x^\alpha$.

The $L^p$ argument is identical.  Fix $1\leq p<3$.  For all sufficiently large $q$, one has $p<p_{q+1}$.  Since
\[
 \|f\|_{L^p}\leq\|f\|_{L^{p_{q+1}}},
\]
$u_q$ converges in $C_t^0L_x^p$.  We therefore obtain a limit
\begin{equation}\label{eq:limit-regularity}
 u\in\bigcap_{\alpha<1/6}C_t^0H_x^\alpha
 \cap
 \bigcap_{1\leq p<3}C_t^0L_x^p.
\end{equation}

By the explicit formula \eqref{eq:Iq-explicit} and \eqref{eq:time-sum}, every $I_q$ is contained in $(3/8,5/8)$.  Hence all $u_q$ and $E_q$ are supported in one fixed compact subinterval of $(0,1)$.  Therefore the same is true for $u$, and $u(0,\cdot)=0$.  Moreover, \eqref{eq:L1-budget-main} implies
\[
 \|u-u_0\|_{C_t^0L_x^1}
 \leq\sum_{q=0}^{\infty}2^{-q-3}m_0
 =\frac14m_0.
\]
Consequently,
\[
 \|u\|_{C_t^0L_x^1}\geq\frac34m_0>0,
\]
so $u$ is nontrivial.

We next prove that the limiting cubic product is well-defined.  The block relation \eqref{eq:block-annulus} implies that no later increment changes a Fourier coefficient already present in $u_q$.  More precisely, let
\[
 F_q:=\supp\widehat u_0\cup B_1\cup\cdots\cup B_q.
\]
Then
\begin{equation}\label{eq:block-truncation}
 \widehat u_q(t,n)=\one_{F_q}(n)\widehat u(t,n).
\end{equation}
Thus $\CS_s(u_q)$ is the partial sum of the nonnegative series $\CS_s(u)$ over triples whose three input frequencies lie in the first $q$ blocks.  These partial sums are increasing.  By \eqref{eq:CS-induction-main} and the summability of $\delta_j$ and $\gamma_j$, monotone convergence yields
\begin{equation}\label{eq:CS-limit}
 \CS_s(u)<\infty.
\end{equation}
In particular, $\N(u)$ is well-defined by \cref{def:cubic-product}. Furthermore, using the triangle inequality in each output frequency,
\begin{align}
 \|\N(u)-\N(u_q)\|_{\A^{-s}_{x,t}}
 \leq
 \sum_{\substack{n_1,n_2,n_3\in\Z\\
 \{n_1,n_2,n_3\}\not\subset F_q}}
 &\wt{n_1-n_2+n_3}^{-s} \notag \\
 &\times
 \left\|
 \widehat u(\cdot,n_1)
 \overline{\widehat u(\cdot,n_2)}
 \widehat u(\cdot,n_3)
 \right\|_{C_t^0} \longrightarrow 0 . \label{eq:N-convergence}
\end{align}

The right-hand side is a decreasing tail of the absolutely convergent
series defining $\CS_s(u)$ and therefore tends to zero. This proves the convergence of the singular cubic product. 

Now we show that the limit $u$ is a singular weak solution. For each $q$, the classical cubic product agrees with $\N(u_q)$ because $u_q$ is a trigonometric polynomial.  Testing \eqref{eq:relaxed-main-prop} against a compactly supported smooth function and using the convergence $u_q\to u$ in $C_t^0L_x^1$, we pass the two linear terms to the limit in distributions.  The nonlinear term converges by \eqref{eq:N-convergence}, and $E_q\to0$ in $\A^{-s}_{x,t}$ by \eqref{eq:error-budget-main}.  Therefore
\[
 i\partial_tu+\partial_x^2u+\sigma\N(u)=0
\]
in distributions on $(0,1)\times\T$.  Since $u$ and $\N(u)$ vanish on a neighborhood of $t=0$, this interior identity extends to every test function in $C_c^\infty([0,1)\times\T)$ with the zero boundary term in \eqref{eq:singular-weak-duality}.  Hence $u$ is a singular weak solution with zero initial datum.

Finally, let $v$ be any singular weak solution with zero initial datum such that $v\in C_t^0H_x^{1/6}$.  The Sobolev embedding $H^{1/6}(\T)\hookrightarrow L^3(\T)$ and \cref{lem:compatibility} identify $\N(v)$ with the ordinary cubic product $|v|^2v$.  Thus $v$ is a classical weak solution in $C_t^0H_x^{1/6}$, and the unconditional uniqueness theorem of Guo, Kwon, and Oh \cite{GKO} implies $v\equiv0$.  This proves the endpoint assertion and completes the proof.
\end{proof}

\section{Intermittent slabs and carrier-wave estimates}\label{sec:slabs}

The analytic building block is the intermittent slab introduced for periodic gKdV in \cite{GMPR}; see also \cite{GismondiSlabs}. We recall its construction and properties with a self-contained proof in this section. In addition, we provide estimates for placing the intermittent slab in the carrier-wave perturbation.  All constants below are uniform in the large parameter $\lambda$ after $\eps$ has been fixed.

\subsection{Construction and properties of the intermittent slab}\label{subsec:Theta}

Choose a nonzero real function $\psi\in C_c^\infty((-1/4,1/4))$ with
\[
 \int_{\R}\psi(x)\,\dd x=0.
\]
We use the real-line Fourier transform
$\widehat f(\xi)=\int_{\R}f(x)e^{-2\pi ix\xi}\,\dd x$.
Set $\widetilde\psi(x)=\psi(-x)$ and $\phi_0=\psi*\widetilde\psi$.
Then $\phi_0$ is real and even, and
\[
 \widehat{\phi_0}(\xi)=|\widehat\psi(\xi)|^2\geq0,
 \qquad
 \int_{\R}\phi_0=0.
\]
Moreover,
\begin{equation}\label{eq:phi-cubic-positive}
 \int_{\R}\phi_0^3\,\dd x>0.
\end{equation}
Indeed, by Fourier inversion,
\[
 \int_{\R}\phi_0^3\,\dd x
 =\iint_{\R^2}
 \widehat{\phi_0}(\xi)
 \widehat{\phi_0}(\eta)
 \widehat{\phi_0}(-\xi-\eta)
 \,\dd\xi\dd\eta.
\]
The integrand is nonnegative.  Since $\widehat{\phi_0}$ is a nonzero
real-analytic function, its zero set on $\R$ has empty interior.  Hence
$\{\widehat{\phi_0}>0\}$ is open and dense.  The three corresponding
open dense subsets of $\R^2$ determined by $\xi$, $\eta$, and
$-\xi-\eta$ have nonempty intersection, so the integrand is positive on
an open set of pairs.  This proves \eqref{eq:phi-cubic-positive}.  After normalization, we henceforth assume
\begin{equation}\label{eq:phi-properties}
 \phi\in C_c^\infty((-1/2,1/2)),
 \qquad
 \phi\text{ real and even},
 \qquad
 \int_{\R}\phi=0,
 \qquad
 \int_{\R}\phi^3=1,
 \qquad
 \widehat\phi\geq0.
\end{equation}

Fix $0<\eps\leq1/2$, and let $\lambda$ be a sufficiently large power of two such that
\begin{equation}\label{eq:integrality-slab}
 \mu:=\lambda^\eps\in \mathbb N,
 \qquad
 \lambda^{1-\eps}\in \mathbb N.
\end{equation}
Define the raw pulse train
\begin{equation}\label{eq:rho-def}
 \rho_{\lambda,\eps}(x)
 :=\sum_{m\in\Z}
 \lambda^{(1-\eps)/3}
 \phi(\lambda x+\lambda^{1-\eps}m).
\end{equation}
Each pulse has width $O(\lambda^{-1})$, whereas neighboring pulse centers are separated by $\mu^{-1}=\lambda^{-\eps}$.  Hence the supports of the pulses are disjoint for all sufficiently large $\lambda$. The next lemma provides some useful properties of $\rho_{\lambda,\epsilon}$.

\begin{lemma}\label{lem:raw-slab}
The function $\rho :=\rho_{\lambda,\eps}$ is real, even, and $\mu^{-1}$-periodic.  It satisfies
\begin{equation}\label{eq:raw-moments}
 \int_{\T}\rho\,\dd x=0,
 \qquad
 \int_{\T}\rho^3\,\dd x=1.
\end{equation}
For every $1\leq p<\infty$,
\begin{equation}\label{eq:raw-Lp-exact}
 \|\rho\|_{L^p(\T)}^p
 =\lambda^{(1-\eps)(p/3-1)}\|\phi\|_{L^p(\R)}^p,
\end{equation}
and
\begin{equation}\label{eq:raw-Linf}
 \|\rho\|_{L^\infty(\T)}
 \lesssim\lambda^{(1-\eps)/3}.
\end{equation}
Its Fourier coefficients are supported on $\mu\Z$ and are given by
\begin{equation}\label{eq:raw-Fourier}
 \widehat\rho(\mu k)
 =\lambda^{-\frac23(1-\eps)}
 \widehat\phi(\lambda^{\eps-1}k)\geq0,
 \qquad k\in\Z.
\end{equation}
More generally, for $n=2,3$,
\begin{equation}\label{eq:rhoj-Fourier}
 \widehat{\rho^n}(\mu k)
 =\lambda^{(1-\eps)(\frac{n}{3}-1)}
 \widehat{\phi^n}(\lambda^{\eps-1}k).
\end{equation}
All other Fourier coefficients in \eqref{eq:raw-Fourier} and \eqref{eq:rhoj-Fourier} vanish.
\end{lemma}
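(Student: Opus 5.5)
The plan is to reduce every claim to a computation on a single pulse. This works because, for $\lambda$ large, $\rho$ restricted to one period of length $\mu^{-1}$ is a single rescaled copy of $\phi$.

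\textbf{Structure of the pulse train.} The $m$-th summand in \eqref{eq:rho-def} is $\lambda^{(1-\eps)/3}\phi(\lambda(x-x_m))$ with center $x_m=-\lambda^{-\eps}m=-m/\mu$. Since $\supp\phi\subset(-1/2,1/2)$, this summand is supported in an interval of length $\lambda^{-1}$ around $x_m$. Neighboring centers are $\mu^{-1}=\lambda^{-\eps}$ apart, and $\lambda^{-1}<\lambda^{-\eps}$ because $\eps\leq1/2$ and $\lambda>1$, so the supports are pairwise disjoint. Consequences:
\begin{itemize}
\item The sum is locally finite, so $\rho$ is smooth.
\item $\rho(x+\mu^{-1})=\rho(x)$, by reindexing $m\mapsto m-1$.
\item Since $\mu\in\mathbb N$ by \eqref{eq:integrality-slab}, $1$ is a multiple of $\mu^{-1}$, so $\rho$ descends to $\T$.
\item $\rho$ is real because $\phi$ is real, and even because $\phi$ is even and $m\mapsto-m$ maps the set of centers to itself.
\end{itemize}

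\textbf{$L^p$ norms.} Disjointness of supports gives the pointwise identity
\[
|\rho|^p=\sum_m\lambda^{p(1-\eps)/3}|\phi(\lambda x+\lambda^{1-\eps}m)|^p .
\]
The circle $\T$ contains exactly $\mu$ pulses, so substituting $y=\lambda x+\lambda^{1-\eps}m$ gives
\[
\|\rho\|_{L^p}^p=\mu\,\lambda^{p(1-\eps)/3}\lambda^{-1}\|\phi\|_{L^p}^p=\lambda^{(1-\eps)(p/3-1)}\|\phi\|_{L^p}^p .
\]
This is \eqref{eq:raw-Lp-exact}. The bound \eqref{eq:raw-Linf} follows from the same disjointness, since at each point at most one summand is nonzero.

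\textbf{Fourier coefficients.} For $n\in\{1,2,3\}$, disjointness again gives $\rho^n=\sum_m\lambda^{n(1-\eps)/3}\phi^n(\lambda x+\lambda^{1-\eps}m)$. This is $\mu^{-1}$-periodic, so its Fourier coefficients on $\T$ vanish off $\mu\Z$. For the frequency $\mu k$, the period $[0,\mu^{-1})$ is repeated $\mu$ times on $\T$, and on each period the phase $e^{-2\pi i\mu kx}$ is the same. Unfolding the sum over $m$ against this phase turns the integral into one over all of $\R$:
\[
\widehat{\rho^n}(\mu k)=\mu\,\lambda^{n(1-\eps)/3}\int_{\R}\phi^n(\lambda x)e^{-2\pi i\mu kx}\dd x=\mu\,\lambda^{n(1-\eps)/3}\lambda^{-1}\widehat{\phi^n}(\mu k/\lambda).
\]
Using $\mu/\lambda=\lambda^{\eps-1}$, this yields \eqref{eq:rhoj-Fourier}, and the case $n=1$ gives \eqref{eq:raw-Fourier}. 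Nonnegativity of $\widehat\rho$ comes from $\widehat\phi\geq0$ in \eqref{eq:phi-properties}.

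\textbf{Moments.} Setting $k=0$ and using $\int\phi=0$ and $\int\phi^3=1$ gives \eqref{eq:raw-moments}: for $n=1$ the prefactor is irrelevant since $\widehat\phi(0)=0$, and for $n=3$ the exponent $(1-\eps)(n/3-1)$ vanishes.

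The only delicate step is the unfolding, which requires $\mu\in\mathbb N$ so that periodization on $\T$ is legitimate, and disjointness of supports so that powers of the sum equal sums of powers. Both are guaranteed by \eqref{eq:integrality-slab} and $\lambda$ large.
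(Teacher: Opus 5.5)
Your proof is correct and follows essentially the same route as the paper: disjointness of the pulses, counting exactly $\mu$ pulses per period of $\T$ for the $L^p$ norms, and a Poisson-summation computation (your explicit unfolding) for the Fourier coefficients of $\rho^n$, $n=1,2,3$. The only cosmetic difference is that you read off the two moment identities from the $k=0$ Fourier coefficients, whereas the paper computes $\int_{\T}\rho^n\,\dd x$ directly by the same pulse-counting argument.
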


\begin{proof}
The periodicity follows by translating $x$ by $\mu^{-1}$ and reindexing $m$.  Evenness follows from the evenness of $\phi$ and the change $m\mapsto-m$.  There are exactly $\mu$ disjoint pulses on one period of $\T$.  Therefore, for every integer $n\geq1$,
\begin{equation}\label{eq:raw-j-moment}
 \int_{\T}\rho^n\,\dd x
 =\lambda^{n(1-\eps)/3}\mu\lambda^{-1}
 \int_{\R}\phi^n\,\dd x
 =\lambda^{(1-\eps)(n/3-1)}
 \int_{\R}\phi^n\,\dd x.
\end{equation}
The cases $n=1$ and $n=3$ give \eqref{eq:raw-moments}, and the same calculation with $|\phi|^p$ gives \eqref{eq:raw-Lp-exact}.  The $L^\infty$ bound is immediate from the disjointness of the pulses. Next, Poisson summation applied to \eqref{eq:rho-def} gives
\[
 \rho(x)
 =\sum_{k\in\Z}
 \lambda^{-\frac23(1-\eps)}
 \widehat\phi(\lambda^{\eps-1}k)
 e^{2\pi i\mu kx},
\]
which proves \eqref{eq:raw-Fourier}.  Applying the same calculation to the disjoint pulse representation of $\rho^j$ gives \eqref{eq:rhoj-Fourier}.
\end{proof}

We now impose an exact finite Fourier support.  Let
\begin{equation}\label{eq:nu-def}
 \nu=\lambda^{1+\eps}=\lambda\mu.
\end{equation}
Choose an even function $\chi\in C_c^\infty(\R)$ such that
\begin{equation}\label{eq:chi-properties}
 0\leq\chi\leq1,
 \qquad
 \chi(\xi)=1\text{ for }|\xi|\leq1,
 \qquad
 \chi(\xi)=0\text{ for }|\xi|\geq2.
\end{equation}
Define
\begin{equation}\label{eq:rho-sharp}
 \widehat{\rho^\sharp}(n)
 :=\chi(n/\nu)\widehat\rho(n).
\end{equation}

\begin{lemma}[Fourier cutoff error]\label{lem:cutoff-error}
Fix $0<\eps\leq1/2$.  For every $m,N\geq0$, there exists $C_{m,N,\eps}$ independent of $\lambda$ such that
\begin{equation}\label{eq:cutoff-error}
 \|\rho-\rho^\sharp\|_{C^m(\T)}
 \leq C_{m,N,\eps}\lambda^{-N}.
\end{equation}
Consequently, for every $N\geq0$,
\begin{equation}\label{eq:cubic-cutoff-moment}
 \int_{\T}(\rho^\sharp)^3\,\dd x
 =1+O_{N,\eps}(\lambda^{-N}).
\end{equation}
\end{lemma}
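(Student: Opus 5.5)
The plan is to estimate the tail of the Fourier series directly. By \eqref{eq:raw-Fourier} and \eqref{eq:rho-sharp}, the difference $\rho-\rho^\sharp$ has Fourier coefficients only at $n=\mu k$, where they equal
\[
 \bigl(1-\chi(\mu k/\nu)\bigr)\lambda^{-\frac23(1-\eps)}\widehat\phi(\lambda^{\eps-1}k).
\]
Since $\nu=\lambda\mu$ and $\chi=1$ on $[-1,1]$, the factor $1-\chi(\mu k/\nu)=1-\chi(k/\lambda)$ vanishes unless $|k|>\lambda$. We also have $0\le 1-\chi\le 1$. Bounding the $C^m$ norm by the absolute sum of the differentiated Fourier coefficients therefore gives
\[
 \|\rho-\rho^\sharp\|_{C^m(\T)}
 \lesssim_m \sum_{|k|>\lambda}(\mu|k|)^m\lambda^{-\frac23(1-\eps)}\,|\widehat\phi(\lambda^{\eps-1}k)|.
\]

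The key input is the Schwartz decay of $\widehat\phi$, which holds because $\phi\in C_c^\infty$: we have $|\widehat\phi(\xi)|\le C_L\langle\xi\rangle^{-L}$ for every $L$. For $|k|>\lambda$ the argument $\xi=\lambda^{\eps-1}k$ satisfies $|\xi|\ge\lambda^{\eps}\ge1$. Hence each term is bounded by
\[
 C_L\,\mu^m|k|^m(\lambda^{\eps-1}|k|)^{-L}
 =C_L\,\lambda^{\eps m+(1-\eps)L}|k|^{m-L}.
\]
Summing over $|k|>\lambda$ with $L>m+1$ gives
\[
 \lesssim \lambda^{\eps m+(1-\eps)L}\lambda^{m-L+1}
 =\lambda^{\eps m+m+1-\eps L}.
\]
Since $\eps>0$ is fixed, choosing $L$ large depending on $m,N,\eps$ makes this at most $\lambda^{-N}$, which proves \eqref{eq:cutoff-error}. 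The only point needing care is this bookkeeping. Part of the decay is absorbed by converting from $\xi$ to $k$, so the gain per unit of $L$ is only $\lambda^{-\eps}$. This is exactly why the constant depends on $\eps$, and it is harmless.

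For \eqref{eq:cubic-cutoff-moment}, write $\rho^\sharp=\rho-e$ with $\|e\|_{L^\infty}\le C_N\lambda^{-N}$; this is the case $m=0$ of the first part. Expand
\[
 (\rho^\sharp)^3=\rho^3-3\rho^2e+3\rho e^2-e^3
\]
and use $\int_\T\rho^3=1$ from \eqref{eq:raw-moments}. The remaining terms are controlled using \eqref{eq:raw-Lp-exact}: $\|\rho\|_{L^2}^2\lesssim\lambda^{-(1-\eps)/3}\le1$ and $\|\rho\|_{L^1}\lesssim1$. Thus
\[
 \Bigl|\int_\T(\rho^\sharp)^3-1\Bigr|
 \lesssim \|e\|_{L^\infty}+\|e\|_{L^\infty}^2+\|e\|_{L^\infty}^3
 \lesssim_{N,\eps}\lambda^{-N}.
\]
Alternatively, one can bound $\|\rho\|_{L^\infty}\lesssim\lambda^{1/3}$ by \eqref{eq:raw-Linf} and absorb that loss by applying the first part with $N$ replaced by $N+1$.
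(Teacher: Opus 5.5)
Your proposal is correct and follows the paper's proof: you restrict the tail to $|k|>\lambda$ and use the Schwartz decay of $\widehat\phi$ to get the exponent $\eps m+m+1-\eps L$. The paper also keeps the factor $\lambda^{-\frac23(1-\eps)}$, which is harmless. For the moment you use the same cubic expansion with H\"older, with the cruder bounds $\|\rho\|_{L^2}^2,\|\rho\|_{L^1}\lesssim1$ where the paper keeps their decay; this changes nothing.
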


\begin{proof}
The multiplier in \eqref{eq:rho-sharp} differs from one only when $|n|>\nu$.  Since $n=\mu k$ on the support of $\widehat\rho$, this requires $|k|>\lambda$.  By \eqref{eq:raw-Fourier},
\[
 \|\rho-\rho^\sharp\|_{C^m}
 \lesssim_m
 \lambda^{-\frac23(1-\eps)}
 \sum_{|k|>\lambda}
 (\mu|k|)^m
 \left|\widehat\phi(\lambda^{\eps-1}k)\right|.
\]
For any $R>0$, the Schwartz decay of $\widehat\phi$ gives
\[
 \left|\widehat\phi(\lambda^{\eps-1}k)\right|
 \leq C_R(1+\lambda^{\eps-1}|k|)^{-R}.
\]
For $|k|>\lambda$, the factor in parentheses is at least
$\lambda^{\eps-1}|k|$.  Taking $R>m+1$ and summing the resulting power
tail gives
\[
 \|\rho-\rho^\sharp\|_{C^m}
 \lesssim_{m,R}
 \lambda^{-\frac23(1-\eps)+\eps m+(1-\eps)R}
 \lambda^{m-R+1}
 =\lambda^{m(1+\eps)+1-\frac23(1-\eps)-\eps R}.
\]
Choosing $R$ sufficiently large in terms of $m,N,\eps$ makes this at
most $C_{m,N,\eps}\lambda^{-N}$.  This proves
\eqref{eq:cutoff-error}.

To prove \eqref{eq:cubic-cutoff-moment}, let
\[
 d:=\rho^\sharp-\rho.
\]
Then
\[
 (\rho^\sharp)^3-\rho^3
 =3\rho^2d+3\rho d^2+d^3.
\]
By H\"older's inequality,
\[
 \left|
 \int_{\T}\big((\rho^\sharp)^3-\rho^3\big)\,\dd x
 \right|
 \leq
 3\|d\|_{L^\infty}\|\rho\|_{L^2}^2
 +3\|d\|_{L^\infty}^2\|\rho\|_{L^1}
 +\|d\|_{L^\infty}^3.
\]
Using \eqref{eq:raw-Lp-exact} with $p=2$ and $p=1$, respectively,
\[
 \|\rho\|_{L^2}^2
 \lesssim_\eps
 \lambda^{-\frac13(1-\eps)},
 \qquad
 \|\rho\|_{L^1}
 \lesssim_\eps
 \lambda^{-\frac23(1-\eps)}.
\]
On the other hand, \eqref{eq:cutoff-error} with $m=0$ gives
\[
 \|d\|_{L^\infty}
 \lesssim_{N,\eps}\lambda^{-N}.
\]
Consequently, for every $N\geq0$,
\[
 \left|
 \int_{\T}\big((\rho^\sharp)^3-\rho^3\big)\,\dd x
 \right|
 \lesssim_{N,\eps}
 \lambda^{-N-\frac13(1-\eps)}
 +\lambda^{-2N-\frac23(1-\eps)}
 +\lambda^{-3N}
 \lesssim_{N,\eps}\lambda^{-N}.
\]
Combining this with \eqref{eq:raw-moments} proves
\eqref{eq:cubic-cutoff-moment}.
\end{proof}

For large $\lambda$, the quantity in \eqref{eq:cubic-cutoff-moment} is positive.  Define
\begin{equation}\label{eq:Theta-def}
 c_{\lambda,\eps}
 :=\left(\int_{\T}(\rho^\sharp)^3\,\dd x\right)^{-1/3},
 \qquad
 \Theta_{\lambda,\eps}:=c_{\lambda,\eps}\rho^\sharp.
\end{equation}
By \cref{lem:cutoff-error},
\begin{equation}\label{eq:c-close}
 c_{\lambda,\eps}=1+O_{N,\eps}(\lambda^{-N})
\end{equation}
for every $N$, and in particular
\begin{equation}\label{eq:c-uniform}
 \frac12\leq c_{\lambda,\eps}\leq2
\end{equation}
for all sufficiently large $\lambda$. The truncated normalized intermittent slab $\Theta_{\lambda,\eps}$ is the one we will use in our perturbation. The next proposition provides the key properties we need for the subsequent estimates, most of which are derived from the original slab $\rho_{\lambda,\eps}$.

\begin{proposition}\label{prop:Theta}
Let $\Theta=\Theta_{\lambda,\eps}$ be defined by \eqref{eq:Theta-def}.  For all sufficiently large $\lambda$, the following properties hold.

\begin{enumerate}[label=\textup{(\roman*)}]
 \item $\Theta$ is real and even, and
 \begin{equation}\label{eq:Theta-Fourier-support}
  \widehat\Theta(0)=0,
  \qquad
  \widehat\Theta(n)\geq0,
  \qquad
  \supp\widehat\Theta
  \subset \mu\Z\cap[-2\nu,2\nu].
 \end{equation}
 \item The cubic moment is exact:
 \begin{equation}\label{eq:Theta-cubic}
  \int_{\T}\Theta^3\,\dd x=1.
 \end{equation}
 \item For every $1\leq p\leq\infty$,
 \begin{equation}\label{eq:Theta-Lp}
  \|\Theta\|_{L^p(\T)}
  \lesssim_p
  \lambda^{(1-\eps)(1/3-1/p)}.
 \end{equation}
 In particular,
 \begin{equation}\label{eq:Theta-L1-L2}
  \|\Theta\|_{L^1}
  \lesssim \lambda^{-\frac23(1-\eps)},
  \qquad
  \int_{\T}\Theta^2\,\dd x
  \lesssim \lambda^{-\frac13(1-\eps)}.
 \end{equation}
 \item The absolute Fourier masses satisfy
 \begin{equation}\label{eq:Theta-A0}
  \|\Theta\|_{\A_x^0} := \|\widehat{\Theta}\|_{\ell^1}
  \lesssim\lambda^{\frac13(1-\eps)},
  \qquad
  \|\Theta^j\|_{\A_x^0} := \|\widehat{\Theta^j}\|_{\ell^1}
  \lesssim_j\lambda^{\frac j3(1-\eps)},
  \quad j=2,3.
 \end{equation}
 \item For $j=2,3$ and every $s>1$,
 \begin{equation}\label{eq:Theta-Pneq}
  \|\Pnz(\Theta^j)\|_{\A_x^{-s}}
  \lesssim_{s,j}
  \lambda^{(1-\eps)(j/3-1)}\mu^{-s}.
 \end{equation}
 \item The Fourier coefficients of $\Theta$ are real and even. The zero modes of the nonlinear interactions satisfy
 \begin{equation}\label{eq:Theta-Q2}
  \sum_{\eta_1-\eta_2=0}
  \widehat\Theta(\eta_1)\widehat\Theta(\eta_2)
  =\int_{\T}\Theta^2\,\dd x
  \lesssim\lambda^{-\frac13(1-\eps)},
 \end{equation}
 and
 \begin{equation}\label{eq:Theta-Q3}
  \sum_{\eta_1-\eta_2+\eta_3=0}
  \prod_{r=1}^3\widehat\Theta(\eta_r)
  =\int_{\T}\Theta^3\,\dd x=1.
 \end{equation}
 Because every Fourier coefficient is nonnegative, the same identities hold after absolute values are inserted term by term.
\end{enumerate}
\end{proposition}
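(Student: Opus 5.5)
The plan is to transfer every property from the raw pulse train $\rho$ of \cref{lem:raw-slab} to $\Theta=c_{\lambda,\eps}\rho^\sharp$, using \cref{lem:cutoff-error}, \eqref{eq:c-close} and \eqref{eq:c-uniform}.

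For (i), $\chi$ is even, real and nonnegative, and $\widehat\rho$ is real, even, nonnegative and supported on $\mu\Z$ by \eqref{eq:raw-Fourier}. Hence $\widehat{\rho^\sharp}$ has the same properties and is supported in $\mu\Z\cap[-2\nu,2\nu]$. Moreover $\widehat\Theta(0)=c\,\widehat\rho(0)=0$ by \eqref{eq:raw-moments}. Realness and evenness of $\Theta$ follow from those of its Fourier coefficients. Item (ii) is immediate from the definition \eqref{eq:Theta-def} of $c_{\lambda,\eps}$.

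For (iii), I would write $\Theta=c(\rho+d)$ with $\|d\|_{L^\infty}\lesssim_N\lambda^{-N}$. Then \eqref{eq:raw-Lp-exact}, \eqref{eq:raw-Linf} and \eqref{eq:c-uniform} give \eqref{eq:Theta-Lp}, since the $d$ contribution is negligible against any power of $\lambda$.

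For (iv), nonnegativity of the coefficients gives $\|\Theta\|_{\A^0}=\sum_n\widehat\Theta(n)=\Theta(0)\leq\|\Theta\|_{L^\infty}\lesssim\lambda^{(1-\eps)/3}$. The Fourier coefficients of $\Theta^j$ are $j$-fold convolutions of nonnegative sequences, so they are also nonnegative. Hence $\|\Theta^j\|_{\A^0}=\Theta(0)^j\lesssim\lambda^{j(1-\eps)/3}$.

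For (v), which I expect to be the main computational step, I would compare with $\rho^j$. Write $\Theta^j-c^j\rho^j=c^j\big((\rho+d)^j-\rho^j\big)$. This difference has size $O(\lambda^{-N})$ in $C^2$, so by \eqref{eq:derivative-L1-to-A}-type summation it is $O(\lambda^{-N})$ in $\A^{-s}$, which is harmless because $\mu^{-s}=\lambda^{-\eps s}$ is only polynomially small. It remains to treat the main term. By \eqref{eq:rhoj-Fourier},
\[
 \|\Pnz\rho^j\|_{\A^{-s}}=\lambda^{(1-\eps)(j/3-1)}\sum_{k\neq0}\wt{\mu k}^{-s}\,\big|\widehat{\phi^j}(\lambda^{\eps-1}k)\big|.
\]
I bound $|\widehat{\phi^j}|\leq\|\phi^j\|_{L^1}$. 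The sum $\sum_{k\ne0}(\mu|k|)^{-s}$ is $\lesssim\mu^{-s}$ since $s>1$, which gives \eqref{eq:Theta-Pneq}. One point to check is that the Fourier support of $\Theta^j$ still lies in $\mu\Z$; this holds since $\widehat\Theta$ is supported on $\mu\Z$, but the estimate above does not even need it.

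For (vi), realness and evenness of the coefficients are from (i). The identities are Parseval/convolution evaluated at the zero mode. Using reality and evenness, $\widehat{\Theta^2}(0)=\sum_{\eta_1-\eta_2=0}\widehat\Theta(\eta_1)\widehat\Theta(\eta_2)$, and similarly the zero mode of $\Theta^3$ equals the triple sum; here $\widehat\Theta(-\eta)=\widehat\Theta(\eta)$ lets the conjugated slot be rewritten with a minus sign. The values then follow from (iii) with $p=2$ and from (ii). Since all terms are nonnegative, inserting absolute values changes nothing.
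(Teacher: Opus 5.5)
Your proposal is correct. Items (i)--(iv) and (vi) follow the paper's argument. In (iv), your identity $\|\Theta^j\|_{\A^0_x}=\Theta(0)^j$ and the paper's convolution bound coincide, since both equal $\|\widehat\Theta\|_{\ell^1}^j$ when the coefficients are nonnegative.

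The real difference is in (v). The paper never treats $\Theta^j$ as a perturbation of $c^j\rho^j$. Because $0\leq\chi\leq1$ and $\widehat\rho\geq0$, one has $0\leq\widehat{\rho^\sharp}\leq\widehat\rho$ termwise. Since the coefficients of $(\rho^\sharp)^j$ and $\rho^j$ are $j$-fold convolutions of nonnegative sequences, this gives $0\leq\widehat{(\rho^\sharp)^j}(n)\leq\widehat{\rho^j}(n)$ for every $n$. Then \eqref{eq:rhoj-Fourier} finishes exactly as in your main-term computation.

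You instead split $\Theta^j=c^j\rho^j+c^j\big((\rho+d)^j-\rho^j\big)$ and absorb the second piece using the rapid decay from \cref{lem:cutoff-error}. This is valid: once $\eps$ and $s$ are fixed, the target $\lambda^{(1-\eps)(j/3-1)-\eps s}$ is a fixed power of $\lambda$. The $C^2$ bound you invoke is more than you need. The $C^0$ (hence $L^1$) smallness of the difference, together with \eqref{eq:derivative-L1-to-A} for $m=0$ and $s>1$, already suffices.

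The two routes buy different things. The paper's monotonicity argument gives an exact bound with no cutoff error and a constant depending only on $s$, $j$, $\phi$ (not on $\eps$ or a choice of $N$), and it reuses the same nonnegativity that drives (iv) and (vi). Your route does not use the sign structure of the cutoff, but its constant depends on $\eps$ and it needs $\lambda$ large in terms of $s$ and $\eps$. That is harmless under the paper's convention that constants may depend on the fixed intermittency exponent.
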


\begin{proof}
All the properties in \eqref{eq:Theta-Fourier-support} follow from \eqref{eq:raw-Fourier} and the even nonnegative cutoff in \eqref{eq:chi-properties}.  The identity \eqref{eq:Theta-cubic} is the definition of the normalizing constant.

The raw estimate \eqref{eq:raw-Lp-exact}, the rapid cutoff error
\eqref{eq:cutoff-error}, and the uniform bound \eqref{eq:c-uniform}
imply \eqref{eq:Theta-Lp}.  More explicitly, for finite $p$,
$\|\rho^\sharp\|_{L^p}\leq\|\rho\|_{L^p}
+\|\rho-\rho^\sharp\|_{C^0}$.  If $p<3$, choose the decay order $N>(1-\eps)(1/p-1/3)$; if
$p\geq3$, the cutoff error is bounded by the nonnegative target power.  The $p=\infty$ case is identical using
\eqref{eq:raw-Linf}.  The two estimates in \eqref{eq:Theta-L1-L2} are
the cases $p=1$ and $p=2$.

Since all Fourier coefficients are nonnegative,
\[
 \|\Theta\|_{\A_x^0}
 =\sum_n\widehat\Theta(n)
 =\Theta (0)
 \lesssim \lambda^{(1-\eps)/3},
\]
where we used \eqref{eq:Theta-Lp}.  This proves the first estimate in \eqref{eq:Theta-A0}.  A direct convolution estimate then gives the bounds in $\A^0_x$ for $\Theta^2$ and $\Theta^3$.

We now prove \eqref{eq:Theta-Pneq}, including the effect of the exact cutoff and normalization.  Since
\[
 0\leq\widehat{\rho^\sharp}(n)\leq\widehat\rho(n)
\]
for every $n$, and since the coefficients of $\rho^j$ are the
$j$-fold convolution of the nonnegative sequence $\widehat\rho$,
convolution of nonnegative sequences yields the termwise domination
\begin{equation}\label{eq:termwise-domination}
 0\leq
 \widehat{(\rho^\sharp)^j}(n)
 \leq
 \widehat{\rho^j}(n),
 \qquad j=2,3.
\end{equation}
Using \eqref{eq:c-uniform}, \eqref{eq:rhoj-Fourier}, and the bound
$|\widehat{\phi^j}(\xi)|\leq\|\phi^j\|_{L^1}$, we find
\[
 \begin{aligned}
 \|\Pnz(\Theta^j)\|_{\A_x^{-s}}
 &\leq C_j
 \sum_{k\neq0}\wt{\mu k}^{-s}
 \widehat{\rho^j}(\mu k)\\
 &\leq C_j\lambda^{(1-\eps)(j/3-1)}
 \sum_{k\neq0}\wt{\mu k}^{-s}
 |\widehat{\phi^j}(\lambda^{\eps-1}k)|\\
 &\lesssim_{s,j}
 \lambda^{(1-\eps)(j/3-1)}\mu^{-s}
 \sum_{k\neq0}|k|^{-s}.
 \end{aligned}
\]
The last series converges for $s>1$, proving \eqref{eq:Theta-Pneq} with a constant independent of $\lambda$.

Finally, since $\Theta$ is real and even, its Fourier coefficients are
real and even.  Thus
\[
 \sum_{\eta_1-\eta_2=0}
 \widehat\Theta(\eta_1)\widehat\Theta(\eta_2)
 =\sum_\eta|\widehat\Theta(\eta)|^2
 =\int_{\T}\Theta^2\,\dd x.
\]
Likewise, after the change $\eta_2\mapsto-\eta_2$, the left side of
\eqref{eq:Theta-Q3} is the zero Fourier coefficient of $\Theta^3$.  This proves \eqref{eq:Theta-Q2} and \eqref{eq:Theta-Q3}.
\end{proof}

\subsection{Placing the slab on a carrier wave}

For an integer $\ell\neq0$, a carrier frequency $K\geq1$, an intermittent profile $\Theta:= \Theta(x)$, a time cutoff function $h:=h(t)$, and an amplitude $c:= c(t,x)$, define
\begin{equation}\label{eq:carrier-operator}
 \mathscr W_{\ell,K}[\Theta,h,c]
 :=h(t)\Theta(x)c(t,x)e^{2\pi i\ell Kx}.
\end{equation}
The following lemma collects the general facts about carrier waves used in the iteration.

\begin{lemma}\label{lem:carrier-slab}
Let $\Theta=\Theta_{\lambda,\eps}$ be given by \cref{prop:Theta}, and suppose
\begin{equation}\label{eq:carrier-amplitude-support}
 \supp\widehat c(t,\cdot)\subset[-M,M]
\end{equation}
for every $t$.  Then:
\begin{enumerate}[label=\textup{(\roman*)}]
 \item the exact Fourier support satisfies
 \begin{equation}\label{eq:carrier-support-general}
  \supp\widehat{\mathscr W_{\ell,K}[\Theta,h,c]}(t,\cdot)
  \subset
  \{\ell K+\eta+\xi:
  \eta\in\mu\Z,\ |\eta|\leq2\nu,\ |\xi|\leq M\};
 \end{equation}
 \item for every $1\leq p\leq\infty$,
 \begin{equation}\label{eq:carrier-Lp-general}
  \|\mathscr W_{\ell,K}[\Theta,h,c]\|_{C_t^0L_x^p}
  \leq
  \|h\|_{L_t^\infty}\|c\|_{C_t^0L_x^\infty}
  \|\Theta\|_{L_x^p};
 \end{equation}
 \item if $2\nu+M\leq|\ell|K/2$, then for every $s\geq0$,
 \begin{equation}\label{eq:carrier-Wiener-general}
  \|\mathscr W_{\ell,K}[\Theta,h,c]\|_{\A^{-s}_{x,t}}
  \lesssim_s
  (|\ell|K)^{-s}
  \|h\Theta c\|_{\A^0_{x,t}}.
 \end{equation}
\end{enumerate}
\end{lemma}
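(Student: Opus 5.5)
The three assertions are essentially direct consequences of \cref{prop:Theta} and \cref{lem:Wiener}, and I will prove them in the stated order, working at a fixed time $t$ and then taking suprema or $C_t^0$ norms of Fourier coefficients.

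For (i), I will expand the product on the Fourier side. Since $h(t)$ is a scalar in $x$, the spatial Fourier coefficients of $h\Theta c$ are $h(t)$ times the convolution $\widehat\Theta*\widehat c(t,\cdot)$. By \eqref{eq:Theta-Fourier-support}, $\widehat\Theta$ is supported in $\mu\Z\cap[-2\nu,2\nu]$, and by \eqref{eq:carrier-amplitude-support}, $\widehat c(t,\cdot)$ is supported in $[-M,M]$. Hence every output frequency of $h\Theta c$ has the form $\eta+\xi$ with $\eta\in\mu\Z$, $|\eta|\le2\nu$, and $|\xi|\le M$. Multiplying by $e^{2\pi i\ell Kx}$ shifts the spectrum by $\ell K$, which gives \eqref{eq:carrier-support-general}.

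For (ii), I will use pointwise H\"older at each fixed $t$. The modulus of the carrier is one, so
\[
 \|\mathscr W_{\ell,K}[\Theta,h,c](t)\|_{L^p_x}\le|h(t)|\,\|c(t)\|_{L^\infty_x}\|\Theta\|_{L^p_x}.
\]
Taking the supremum over $t$ yields \eqref{eq:carrier-Lp-general}. This step holds for every $1\le p\le\infty$ with no dependence on the support assumptions.

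For (iii), I will apply \eqref{eq:modulation-Wiener} from \cref{lem:Wiener} with $f=h\Theta c$. By part (i) before modulation, the spectrum of $f(t,\cdot)$ lies in $[-(2\nu+M),2\nu+M]$. Under the hypothesis $2\nu+M\le|\ell|K/2$, this is contained in $[-|\ell|K/2,|\ell|K/2]$. That is exactly the support condition of \cref{lem:Wiener}(iii), so the lemma gives
\[
 \|e^{2\pi i\ell Kx}f\|_{\A^{-s}_{x,t}}\lesssim_s(|\ell|K)^{-s}\|f\|_{\A^0_{x,t}}.
\]
One technical point is that \cref{lem:Wiener}(iii) is stated for $K\ge1$ and uses $\wt{\ell K+n}\ge|\ell K+n|\ge|\ell|K/2$; both hold here.

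There is no real obstacle. The only point needing care is that $\|h\Theta c\|_{\A^0_{x,t}}$ is finite and correctly defined. This follows because $\Theta$ is a trigonometric polynomial and $c(t,\cdot)$ has finite Fourier support. I will also assume $c$ is continuous in time, so that the coefficients lie in $C_t^0$.
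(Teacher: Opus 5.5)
Your proposal is correct and follows the paper's own argument: part (i) comes from convolving the supports of $\widehat\Theta$ and $\widehat c(t,\cdot)$ and then shifting by $\ell K$, part (ii) from the unit modulus of the carrier, and part (iii) is a direct application of \eqref{eq:modulation-Wiener} to $f=h\Theta c$, whose spectrum lies in $[-(2\nu+M),2\nu+M]\subset[-|\ell|K/2,|\ell|K/2]$. Your remarks on the finiteness of $\|h\Theta c\|_{\A^0_{x,t}}$ and the time continuity of $c$ are harmless bookkeeping that the paper leaves implicit.
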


\begin{proof}
The support statement follows from convolution and \eqref{eq:Theta-Fourier-support}.  The $L^p$ estimate is immediate because the carrier has unit modulus.  Under the last support assumption, the unmodulated factor $h\Theta c$ lies in $[-|\ell|K/2,|\ell|K/2]$, so \eqref{eq:carrier-Wiener-general} is exactly \eqref{eq:modulation-Wiener} from \cref{lem:Wiener}.
\end{proof}

\section{Proof of the main inductive proposition}\label{sec:proof-main-prop}

This section is devoted to the proof of \cref{prop:main-inductive}. Firstly, we fix an old-stage solution $(u_q,E_q)$ of the iteration and construct the candidate solution $(u_{q+1},E_{q+1})$ for the next stage. Then we carry out the estimates mentioned in the iterative proposition for $(u_{q+1},E_{q+1})$.

\subsection{The two-carrier perturbation and exact support}\label{subsec:one-step}

Assume that $u_q,E_q\in C^\infty([0,1]\times\T)$ satisfy all the properties stated in \cref{prop:main-inductive}. Then
\[
i\partial_tu_q+\partial_x^2u_q+\sigma|u_q|^2u_q=E_q
\]
and $(u_q, E_q)$ have finite spatial Fourier support,
\[
\operatorname{supp}\widehat u_q(t,\cdot)
\cup\operatorname{supp}\widehat E_q(t,\cdot)
\subset[-M_q,M_q].
\]
In addition, we recall explicitly the inductive residual bound
\[
 \|E_q\|_{\A^{-s}_{x,t}}\leq\delta_q
\]
from \eqref{eq:error-budget-main}.  Since $E_q$ is supported in spatial frequencies $|k|\leq M_q$, \eqref{eq:Wiener-product} in \cref{lem:Wiener} implies
\begin{equation}\label{eq:old-error-positive-Wiener}
 \|E_q\|_{\A^s_{x,t}}
 \leq \langle M_q\rangle^{2s}\|E_q\|_{\A^{-s}_{x,t}}
 \leq \langle M_q\rangle^{2s}\delta_q.
\end{equation}
Recall also that their temporal supports lie in the explicit interval
\begin{equation}\label{eq:Iq-at-stage-q}
 I_q
 =\left[
 \frac12-\frac1{16}-\sum_{j=1}^{q}\lambda_j^{-\beta},
 \frac12+\frac1{16}+\sum_{j=1}^{q}\lambda_j^{-\beta}
 \right].
\end{equation}

Choose a scalar $A_{q+1}$ such that
\begin{equation}\label{eq:A-choice}
 A_{q+1}>1+\|E_q\|_{C_t^0L_x^\infty}
\end{equation}
and define
\begin{equation}\label{eq:a-b-choice}
 a_{q+1}=A_{q+1}^{1/3},
 \qquad
 b_{q+1}(t,x)
 =-\sigma a_{q+1}^{-2}\overline{E_q(t,x)}.
\end{equation}
Then
\begin{equation}\label{eq:b-properties}
 a_{q+1}^2\overline{b_{q+1}}=-\sigma E_q,
 \qquad
 \|b_{q+1}\|_{C_t^0L_x^\infty}\leq a_{q+1},
 \qquad
 \supp\widehat{b_{q+1}}(t,\cdot)\subset[-M_q,M_q].
\end{equation}

Take the intermittency exponent $\eps_{q+1}$ from \eqref{eq:diagonal-parameters} and choose a sufficiently large admissible frequency $\lambda_{q+1}$.  Define
\begin{equation}\label{eq:one-step-frequency-definitions}
 \mu_{q+1}=\lambda_{q+1}^{\eps_{q+1}},
 \qquad
 \nu_{q+1}=\lambda_{q+1}^{1+\eps_{q+1}},
 \qquad
 K_{q+1}=16\nu_{q+1}, 
 \qquad
 M_{q+1}=5K_{q+1}
\end{equation}
in agreement with \eqref{eq:mu-nu-K}, and require
\begin{equation}\label{eq:separation-step}
 \mu_{q+1}>100M_q.
\end{equation}
According to \eqref{eq:Iq-explicit},
\begin{equation}\label{eq:Iqplus1-explicit-step}
 I_{q+1}
 =\left[
 \inf I_q-\lambda_{q+1}^{-\beta},
 \sup I_q+\lambda_{q+1}^{-\beta}
 \right].
\end{equation}
Choose a smooth temporal cutoff $h_{q+1}$ such that
\begin{equation}\label{eq:h-properties}
 0\leq h_{q+1}\leq1,
 \qquad
 h_{q+1}=1\text{ on }I_q,
 \qquad
 \supp h_{q+1}\subset I_{q+1},
 \qquad
 \|\partial_t h_{q+1}\|_{L_t^\infty}
 \lesssim\lambda_{q+1}^{\beta}.
\end{equation}

\begin{lemma}[Two-carrier algebraic identity]\label{lem:polarization}
Let $a>0$, $b\in\C$, and $z\in\C$ with $|z|=1$.  Then
\begin{equation}\label{eq:polarization}
 |az+bz^2|^2(az+bz^2)
 =a^2\overline b
 +a(a^2+2|b|^2)z
 +b(2a^2+|b|^2)z^2
 +ab^2z^3.
\end{equation}
\end{lemma}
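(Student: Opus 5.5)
The identity is a finite algebraic computation, and the plan is to expand the left-hand side directly using only $|z|=1$, so that $\overline z=z^{-1}$. Writing $w=az+bz^2$, its conjugate is
\[
 \overline w=a\overline z+\overline b\,\overline z^{2}=az^{-1}+\overline b z^{-2},
\]
because $a$ is real. Hence
\[
 |w|^2=w\overline w
 =(az+bz^2)(az^{-1}+\overline b z^{-2})
 =a^2+|b|^2+a\overline b z^{-1}+abz.
\]
This first step isolates the only place the hypotheses $a>0$ and $|z|=1$ enter.

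Next multiply $|w|^2$ by $w=az+bz^2$ and collect powers of $z$. The contributions are as follows.
\begin{enumerate}
 \item From $a\overline b z^{-1}$: $a\overline b z^{-1}\cdot az=a^2\overline b$ and $a\overline b z^{-1}\cdot bz^2=a|b|^2z$.
 \item From $a^2+|b|^2$: $(a^2+|b|^2)az$ and $(a^2+|b|^2)bz^2$.
 \item From $abz$: $abz\cdot az=a^2bz^2$ and $abz\cdot bz^2=ab^2z^3$.
\end{enumerate}
Grouping by powers of $z$ gives:
\begin{itemize}
 \item $z^0$: $a^2\overline b$;
 \item $z^1$: $a(a^2+|b|^2)+a|b|^2=a(a^2+2|b|^2)$;
 \item $z^2$: $b(a^2+|b|^2)+a^2b=b(2a^2+|b|^2)$;
 \item $z^3$: $ab^2$.
\end{itemize}
This is exactly \eqref{eq:polarization}.

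No step is a genuine obstacle. The only care needed is bookkeeping, namely using $\overline a=a$ and $\overline z=z^{-1}$ when conjugating. It is also worth noting that no $z^{-1}$ or $z^{-2}$ terms survive in the product, which is why the output contains only the modes $z^0,\dots,z^3$.

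Finally, I would record how the identity is used. Substituting $z=e^{2\pi iK_{q+1}x}$ and $a=a_{q+1}$, $b=b_{q+1}$, the constant term $a_{q+1}^2\overline{b_{q+1}}$ equals $-\sigma E_q$ by \eqref{eq:b-properties}. The remaining terms carry carriers $\ell K_{q+1}$ with $\ell\in\{1,2,3\}$. These are the high-frequency terms to which \cref{lem:carrier-slab} applies later.
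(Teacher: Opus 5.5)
Your proposal is correct and follows the paper's own argument: use $\overline z=z^{-1}$ and $a\in\R$ to get $|az+bz^2|^2=a^2+|b|^2+a\overline b z^{-1}+abz$, then multiply by $az+bz^2$ and collect powers of $z$. Your coefficient bookkeeping for the modes $z^0,\dots,z^3$ is right.
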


\begin{proof}
Since
\[
 |az+bz^2|^2=a^2+|b|^2+a\overline b z^{-1}+abz,
\]
multiplication by $az+bz^2$ gives \eqref{eq:polarization}.
\end{proof}

Define the carrier polynomial
\begin{equation}\label{eq:V-and-w}
 V_{q+1}
 :=a_{q+1}e^{2\pi iK_{q+1}x}+b_{q+1}e^{4\pi iK_{q+1}x}
\end{equation}
and let
\begin{equation}\label{eq:Theta-qplus1}
 \Theta_{q+1}
 :=\Theta_{\lambda_{q+1},\eps_{q+1}}
\end{equation}
be the normalized intermittent slab constructed in \cref{prop:Theta}.  
Using the carrier-wave operator \eqref{eq:carrier-operator}, set
\begin{equation}\label{eq:packet-decomp}
 w_{q+1}^{(1)}
 :=\mathscr W_{1,K_{q+1}}[\Theta_{q+1},h_{q+1},a_{q+1}],
 \qquad
 w_{q+1}^{(2)}
 :=\mathscr W_{2,K_{q+1}}[\Theta_{q+1},h_{q+1},b_{q+1}],
\end{equation}
and
\begin{equation}\label{eq:packets}
 w_{q+1}:=h_{q+1}\Theta_{q+1} V_{q+1} =w_{q+1}^{(1)}+w_{q+1}^{(2)}
 ,
 \qquad
 u_{q+1}:=u_q+w_{q+1}.
\end{equation}
Hence, $\omega_{q+1}$ is the perturbation and $u_{q+1}$ given above is our new candidate solution to \eqref{eq:relaxed-main-prop}. The following lemma gives the Fourier support of the perturbation.
\begin{lemma}\label{lem:packet-support}
For every $t$, with the parameters defined in \eqref{eq:one-step-frequency-definitions}, we have that
\begin{equation}\label{eq:packet-support-1}
 \supp\widehat w_{q+1}^{(1)}(t,\cdot)
 \subset
 \{K_{q+1}+\eta:\eta\in\mu_{q+1}\Z,\ |\eta|\leq2\nu_{q+1}\},
\end{equation}
and
\begin{equation}\label{eq:packet-support-2}
 \supp\widehat w_{q+1}^{(2)}(t,\cdot)
 \subset
 \{2K_{q+1}+\eta+\xi:\eta\in\mu_{q+1}\Z,\ |\eta|\leq2\nu_{q+1},\ |\xi|\leq M_q\}.
\end{equation}
Consequently,
\begin{equation}\label{eq:rough-w-annulus}
 14\nu_{q+1}\leq n\leq35\nu_{q+1}
 \qquad\text{for every }n\in\supp\widehat w_{q+1}(t,\cdot),
\end{equation}
for sufficiently large $\lambda_{q+1}$.  In particular,
\begin{equation}\label{eq:block-step}
 \supp\widehat w_{q+1}(t,\cdot)
 \subset\{2M_q<n<M_{q+1}/2\}.
\end{equation}
\end{lemma}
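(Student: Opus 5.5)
The plan is to read off each packet's Fourier support from \cref{lem:carrier-slab}(i) and then turn the resulting description into the numerical bounds \eqref{eq:rough-w-annulus}--\eqref{eq:block-step}, using only $\nu_{q+1}\gg M_q$.

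\textbf{The two packet supports.} For $w_{q+1}^{(1)}$ the amplitude $c=a_{q+1}$ is a constant. Hence $\widehat c(t,\cdot)$ is supported at $\{0\}$, and we may apply \eqref{eq:carrier-support-general} with $M=0$, $\ell=1$, $K=K_{q+1}$. This gives \eqref{eq:packet-support-1} directly, since the only remaining freedom is the slab frequency $\eta\in\mu_{q+1}\Z$ with $|\eta|\le 2\nu_{q+1}$, by \eqref{eq:Theta-Fourier-support}. The time cutoff $h_{q+1}(t)$ is a scalar for each fixed $t$, so it does not move spatial frequencies. For $w_{q+1}^{(2)}$ we use \eqref{eq:b-properties}, which says $\supp\widehat{b_{q+1}}(t,\cdot)\subset[-M_q,M_q]$. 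Then \eqref{eq:carrier-support-general} with $\ell=2$ and $M=M_q$ yields \eqref{eq:packet-support-2}. Since $w_{q+1}=w_{q+1}^{(1)}+w_{q+1}^{(2)}$, its Fourier support lies in the union of the two sets.

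\textbf{The annulus bound.} Recall $K_{q+1}=16\nu_{q+1}$. Every frequency in the first set satisfies
\[
14\nu_{q+1}\le K_{q+1}-2\nu_{q+1}\le n\le K_{q+1}+2\nu_{q+1}=18\nu_{q+1}.
\]
Every frequency in the second set satisfies
\[
30\nu_{q+1}-M_q\le n\le 34\nu_{q+1}+M_q.
\]
To place this inside $[14\nu_{q+1},35\nu_{q+1}]$ we need $M_q\le\nu_{q+1}$. This follows from \eqref{eq:separation-step}: $M_q<\mu_{q+1}/100\le\nu_{q+1}$, because $\nu_{q+1}=\lambda_{q+1}\mu_{q+1}$. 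This proves \eqref{eq:rough-w-annulus}.

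\textbf{The block bound.} For \eqref{eq:block-step}, the upper side uses $M_{q+1}/2=5K_{q+1}/2=40\nu_{q+1}>35\nu_{q+1}$. For the lower side we need $2M_q<14\nu_{q+1}$, which again follows from $M_q<\nu_{q+1}$.

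\textbf{Main difficulty.} Nothing here is delicate; the argument is just careful bookkeeping. The one point to watch is to invoke \cref{lem:carrier-slab} with $M=0$ for the constant amplitude of the first packet. This is what gives the sharper form \eqref{eq:packet-support-1}, with no $\xi$ modulation, matching the block $B_{q+1}$ in \eqref{eq:exact-block}.
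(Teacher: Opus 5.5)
Your proposal is correct and follows essentially the same route as the paper. The paper also applies \cref{lem:carrier-slab}\,(i) with $\ell=1$ and the constant amplitude $a_{q+1}$, and with $\ell=2$ and $b_{q+1}$ supported in $[-M_q,M_q]$, and then uses $M_q<\mu_{q+1}/100\le\nu_{q+1}$, $K_{q+1}=16\nu_{q+1}$, and $M_{q+1}/2=40\nu_{q+1}$ to obtain the annulus and block bounds.
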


\begin{proof}
Recall from \eqref{eq:a-b-choice} the definitions of $a_{q+1}$ and $b_{q+1}$. Apply \cref{lem:carrier-slab}, \textup{(i)} first with $\ell=1$ and the constant amplitude $c=a_{q+1}$, and then with $\ell=2$ and $c=b_{q+1}$, whose support is contained in $[-M_q,M_q]$ by \eqref{eq:b-properties}.  This gives \eqref{eq:packet-support-1}--\eqref{eq:packet-support-2}.  The first carrier-wave packet $\omega^{(1)}_{q+1}$ lies in $[K_{q+1}-2\nu_{q+1},K_{q+1}+2\nu_{q+1}]=[14\nu_{q+1},18\nu_{q+1}]$.  The second lies in
\[
 [2K_{q+1}-2\nu_{q+1}-M_q,2K_{q+1}+2\nu_{q+1}+M_q].
\]
Since $M_q<\mu_{q+1}/100<\nu_{q+1}/100$, this interval is contained in $[29\nu_{q+1},35\nu_{q+1}]$.  This proves \eqref{eq:rough-w-annulus}.  The lower bound in \eqref{eq:block-step} follows from $14\nu_{q+1}>2M_q$, and the upper bound follows from $35\nu_{q+1}<40\nu_{q+1}=M_{q+1}/2$.
\end{proof}

In the next section we will establish all estimates for the perturbation as well as the new error. For notational convenience, in the subsequent estimates, all constants that depend only on the earlier stage $q$, on $s,\beta$, on the prescribed exponent $\eps_{q+1}$, and on the fixed profiles will be denoted by $C_q$.  They may depend on finitely many smooth and Wiener norms of $u_q,E_q,b_{q+1}$, and on $A_{q+1}$, but they are independent of the new large parameter $\lambda_{q+1}$. 

\subsection{Size of the perturbation}

\begin{lemma}[$L^p$ and Sobolev size]\label{lem:w-size}
For every $1\leq p\leq\infty$,
\begin{equation}\label{eq:w-Lp}
 \|w_{q+1}\|_{C_t^0L_x^p}
 \leq C_q
 \lambda_{q+1}^{(1-\eps_{q+1})(1/3-1/p)}.
\end{equation}
In particular,
\begin{equation}\label{eq:w-L1}
 \|w_{q+1}\|_{C_t^0L_x^1}
 \leq C_q\lambda_{q+1}^{-\frac23(1-\eps_{q+1})},
\end{equation}
and
\begin{equation}\label{eq:w-L2}
 \|w_{q+1}\|_{C_t^0L_x^2}^2
 \leq C_q\lambda_{q+1}^{-\frac13(1-\eps_{q+1})}.
\end{equation}
For every $\alpha\in\R$, we also have
\begin{equation}\label{eq:w-Halpha}
 \|w_{q+1}\|_{C_t^0H_x^\alpha}
 \leq C_{q,\alpha}
 \lambda_{q+1}^{(1+\eps_{q+1})\alpha-\frac16(1-\eps_{q+1})}.
\end{equation}
\end{lemma}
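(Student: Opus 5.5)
The $L^p$ bound \eqref{eq:w-Lp} will come straight from the packet decomposition \eqref{eq:packets} and \cref{lem:carrier-slab}\textup{(ii)}. Writing $w_{q+1}=w_{q+1}^{(1)}+w_{q+1}^{(2)}$ and applying \eqref{eq:carrier-Lp-general} once with $c=a_{q+1}$ and once with $c=b_{q+1}$, together with $0\le h_{q+1}\le1$ and $\|b_{q+1}\|_{C_t^0L_x^\infty}\le a_{q+1}$ from \eqref{eq:b-properties}, we get
\[
 \|w_{q+1}\|_{C_t^0L_x^p}\le 2a_{q+1}\|\Theta_{q+1}\|_{L^p}.
\]
Then \eqref{eq:Theta-Lp} gives \eqref{eq:w-Lp} with $C_q\simeq a_{q+1}=A_{q+1}^{1/3}$. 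The constant $A_{q+1}$ depends only on $E_q$, hence only on stage $q$. The implicit constant in \eqref{eq:Theta-Lp} depends on $p$ and $\eps_{q+1}$ but not on $\lambda_{q+1}$. The special cases \eqref{eq:w-L1} and \eqref{eq:w-L2} are $p=1$ and $p=2$; for \eqref{eq:w-L2} we square the bound, so the exponent becomes $2(1-\eps_{q+1})(1/3-1/2)=-\tfrac13(1-\eps_{q+1})$.

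For the Sobolev bound \eqref{eq:w-Halpha}, the key input is the frequency localization \eqref{eq:rough-w-annulus} in \cref{lem:packet-support}. Every frequency $n$ in the support of $\widehat w_{q+1}(t,\cdot)$ satisfies $14\nu_{q+1}\le n\le35\nu_{q+1}$, so $\wt n^{\alpha}\le C_\alpha\nu_{q+1}^{\alpha}$ for every real $\alpha$. For $\alpha\ge0$ we use the upper bound on $n$, and for $\alpha<0$ the lower bound. By Plancherel,
\[
 \|w_{q+1}(t)\|_{H^\alpha}\le C_\alpha\nu_{q+1}^{\alpha}\|w_{q+1}(t)\|_{L^2}.
\]
Combining this with the $L^2$ bound just proved (unsquared, exponent $-\tfrac16(1-\eps_{q+1})$) and $\nu_{q+1}=\lambda_{q+1}^{1+\eps_{q+1}}$ gives \eqref{eq:w-Halpha} uniformly in $t$.

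No step here is a genuine obstacle. The only point needing care is to confirm that the constants are independent of $\lambda_{q+1}$. For the $L^p$ bound this is the uniformity of \eqref{eq:Theta-Lp} in $\lambda$, which holds for $\lambda$ large by \eqref{eq:c-uniform}. For the Sobolev bound, the annulus \eqref{eq:rough-w-annulus} is only asserted for sufficiently large $\lambda_{q+1}$, so we impose this largeness, which is compatible with the choice of $\lambda_{q+1}$.
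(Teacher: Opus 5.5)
Your proof is correct and follows essentially the same route as the paper. You bound the two packets via $\|b_{q+1}\|_{C_t^0L_x^\infty}\le a_{q+1}$ and $0\le h_{q+1}\le 1$, which is the triangle-inequality form of the paper's pointwise bound $|V_{q+1}|\le 2a_{q+1}$, and combine it with the slab estimate \eqref{eq:Theta-Lp}. You then use Plancherel on the frequency annulus $|n|\sim\nu_{q+1}\sim K_{q+1}$ from \cref{lem:packet-support} together with the unsquared $L^2$ bound, exactly as the paper does.
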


\begin{proof}
By \eqref{eq:b-properties},
\[
 |V_{q+1}(t,x)|\leq a_{q+1}+|b_{q+1}(t,x)|\leq2a_{q+1}.
\]
Thus \eqref{eq:w-Lp} follows from \eqref{eq:Theta-Lp}.  The estimates \eqref{eq:w-L1} and \eqref{eq:w-L2} are the cases $p=1$ and $p=2$.

By \cref{lem:packet-support}, every Fourier frequency of $w_{q+1}$ is comparable to $K_{q+1}\simeq\nu_{q+1}=\lambda_{q+1}^{1+\eps_{q+1}}$.  Plancherel therefore gives, for every real $\alpha$,
\[
 \|w_{q+1}\|_{H^\alpha}
 \lesssim_\alpha K_{q+1}^\alpha\|w_{q+1}\|_{L^2}.
\]
Combining this with \eqref{eq:w-L2} proves \eqref{eq:w-Halpha}.
\end{proof}

Note that for the Sobolev exponent $\alpha_{q+1}=1/6-\eps_{q+1}$, the power in \eqref{eq:w-Halpha} is
\begin{equation}\label{eq:H-diagonal-exponent}
 (1+\eps_{q+1})\left(\frac16-\eps_{q+1}\right)
 -\frac16(1-\eps_{q+1})
 =-\frac23\eps_{q+1}-\eps_{q+1}^2<0.
\end{equation}
For $p_{q+1}=3-\eps_{q+1}$, the power in \eqref{eq:w-Lp} is
\begin{equation}\label{eq:Lp-diagonal-exponent}
 (1-\eps_{q+1})\left(\frac13-\frac1{3-\eps_{q+1}}\right)
 =-\frac{\eps_{q+1}(1-\eps_{q+1})}{3(3-\eps_{q+1})}<0.
\end{equation}

\subsection{The new error estimate}\label{subsec:residual}
Apply \cref{lem:polarization} with $z=e^{2\pi iK_{q+1}x}$.  By \eqref{eq:b-properties},
\begin{equation}\label{eq:V-cubic}
 |V_{q+1}|^2V_{q+1}
 =-\sigma E_q
 +\sum_{\ell=1}^3H_{\ell,q+1}e^{2\pi i\ell K_{q+1}x},
\end{equation}
where
\begin{equation}\label{eq:H-defs}
 \begin{aligned}
 H_{1,q+1}&=a_{q+1}(a_{q+1}^2+2|b_{q+1}|^2),\\
 H_{2,q+1}&=b_{q+1}(2a_{q+1}^2+|b_{q+1}|^2),\\
 H_{3,q+1}&=a_{q+1}b_{q+1}^2.
 \end{aligned}
\end{equation}
Their Fourier supports satisfy
\begin{equation}\label{eq:H-support}
 \supp\widehat{H_{1,q+1}}, \supp\widehat{H_{3,q+1}}\subset[-2M_q,2M_q],
 \quad
 \supp\widehat{H_{2,q+1}}\subset[-3M_q,3M_q].
\end{equation}
By $\eqref{eq:h-properties}$, $h_{q+1}=1$ on the temporal support of $E_q$, and hence
\begin{equation}\label{eq:hE}
 h_{q+1}^3E_q=E_q.
\end{equation}
Therefore
\begin{equation}\label{eq:osc-exact}
 E_q+\sigma|w_{q+1}|^2w_{q+1}
 =E_q(1-\Theta_{q+1}^3)
 +\sigma h_{q+1}^3\Theta_{q+1}^3
 \sum_{\ell=1}^3H_{\ell,q+1}e^{2\pi i\ell K_{q+1}x}.
\end{equation}

Using the cubic identity 
\begin{equation}\label{eq:mixed-cubic}
 \begin{aligned}
 &|u_q+w_{q+1}|^2(u_q+w_{q+1})
 -|u_q|^2u_q-|w_{q+1}|^2w_{q+1}
 \\={}&2|u_q|^2w_{q+1}+u_q^2\overline{w_{q+1}}
 +2u_q|w_{q+1}|^2+\overline{u_q}w_{q+1}^2,
 \end{aligned}
\end{equation}
we obtain
\begin{equation}\label{eq:relaxed-next}
 i\partial_t u_{q+1}+\partial_x^2u_{q+1}
 +\sigma|u_{q+1}|^2u_{q+1}=E_{q+1},
\end{equation}
where we define
\begin{equation}\label{eq:error-decomp}
 \begin{aligned}
 E_{O,q+1}&:=E_q+\sigma|w_{q+1}|^2w_{q+1},\\
 E_{N,q+1}&:=\sigma\left(
 2|u_q|^2w_{q+1}+u_q^2\overline{w_{q+1}}
 +2u_q|w_{q+1}|^2+\overline{u_q}w_{q+1}^2
 \right),\\
 E_{D,q+1}&:=\partial_x^2w_{q+1},\\
 E_{T,q+1}&:=i\partial_t w_{q+1},\\
 E_{q+1}&:=E_{O,q+1}+E_{N,q+1}+E_{D,q+1}+E_{T,q+1}.
 \end{aligned}
\end{equation}
Hence $(u_{q+1},E_{q+1})$ is the new solution to our relaxed equation. The new defect $E_{q+1}$ consists of several types of errors arising from the perturbation and the old solution. The term $E_{O,q+1}$ is called the \textit{oscillation error} and $E_{N,q+1}$ is known as the \textit{Nash error}. The linear errors include $E_{D,q+1}$, the \textit{dispersion error}, and $E_{T,q+1}$, the \textit{temporal error}. We first study the Fourier support of the new error $E_{q+1}$, which is given by the next lemma.

\begin{lemma}\label{lem:new-error-support}
For every $t$, the new solution $(u_{q+1},E_{q+1})$ has Fourier support satisfying 
\begin{equation}\label{eq:new-error-support}
 \supp\widehat{u_{q+1}}(t,\cdot)
 \cup
 \supp\widehat{E_{q+1}}(t,\cdot)
 \subset[-5K_{q+1},5K_{q+1}].
\end{equation}
Moreover, the temporal support of both functions lies in $I_{q+1}$.
\end{lemma}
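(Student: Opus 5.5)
The plan is to track Fourier supports term by term through the decomposition \eqref{eq:error-decomp}, using only that products add supports and derivatives preserve them. By \cref{lem:packet-support}, every frequency of $w_{q+1}$ lies in $[14\nu_{q+1},35\nu_{q+1}]$, and $\overline{w_{q+1}}$ has support in the negatives of this interval. Also $u_q$ and $E_q$ are supported in $[-M_q,M_q]$, where $M_q<\nu_{q+1}/100$. For $u_{q+1}=u_q+w_{q+1}$ this already gives support in $[-35\nu_{q+1},35\nu_{q+1}]$, and $35\nu_{q+1}\le 5K_{q+1}=80\nu_{q+1}$.

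For the error, I will treat the pieces in order.

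\textbf{Linear errors.} $E_{D,q+1}=\partial_x^2w_{q+1}$ and $E_{T,q+1}=i\partial_tw_{q+1}$ have the same spatial support as $w_{q+1}$. This holds because $\partial_t$ acts only on $h_{q+1}$ and on $b_{q+1}$, and $\partial_t b_{q+1}=-\sigma a_{q+1}^{-2}\overline{\partial_tE_q}$ keeps the support in $[-M_q,M_q]$. So \eqref{eq:packet-support-2} still applies.

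\textbf{Nash error.} Each term of $E_{N,q+1}$ is a product of at most one copy of $u_q$ or $\overline{u_q}$ and at most two factors from $w_{q+1},\overline{w_{q+1}}$; the terms with a single new factor carry two old factors. Each $w$-factor contributes at most $35\nu_{q+1}$ in absolute value and each old factor at most $M_q$. So every output frequency is bounded by $70\nu_{q+1}+2M_q<80\nu_{q+1}$.

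\textbf{Oscillation error.} $E_{O,q+1}=E_q+\sigma|w_{q+1}|^2w_{q+1}$. Here $E_q$ is trivially inside the range, and the cubic term is bounded by $3\cdot 35\nu_{q+1}=105\nu_{q+1}$. This crude count exceeds $80\nu_{q+1}$, so this is the one step needing care.

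I will instead use the exact representation \eqref{eq:osc-exact}. The function $\Theta_{q+1}^3$ has support in $[-6\nu_{q+1},6\nu_{q+1}]$ by \eqref{eq:Theta-Fourier-support}. By \eqref{eq:H-support}, $H_{\ell,q+1}$ has support in $[-3M_q,3M_q]$. Hence $\Theta_{q+1}^3H_{\ell,q+1}e^{2\pi i\ell K_{q+1}x}$ is supported in
\[
[\ell K_{q+1}-6\nu_{q+1}-3M_q,\ \ell K_{q+1}+6\nu_{q+1}+3M_q].
\]
For $\ell=3$ the upper endpoint is $48\nu_{q+1}+6\nu_{q+1}+3M_q<80\nu_{q+1}$. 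The term $E_q(1-\Theta_{q+1}^3)$ lies in $[-6\nu_{q+1}-M_q,6\nu_{q+1}+M_q]$. Thus all of $E_{q+1}$ lies in $[-5K_{q+1},5K_{q+1}]$.

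\textbf{Temporal support.} By \eqref{eq:h-properties}, $w_{q+1}$ vanishes outside $\supp h_{q+1}\subset I_{q+1}$. Also $u_q$ and $E_q$ are supported in $I_q\subset I_{q+1}$. Every term of $E_{q+1}$ contains either $E_q$ or at least one factor of $w_{q+1}$ or one of its derivatives, so each vanishes outside $I_{q+1}$. The same holds for $u_{q+1}$.
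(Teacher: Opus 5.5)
Your proof is correct and follows the paper's argument: the linear errors inherit the support of $w_{q+1}$, the oscillation error is handled through the exact representation \eqref{eq:osc-exact} (carriers $\ell K_{q+1}$ with radius $6\nu_{q+1}+3M_q$, plus the low part $E_q(1-\Theta_{q+1}^3)$), and the temporal support comes from $h_{q+1}$, $u_q$, and $E_q$. The only difference is cosmetic: you bound the Nash terms crudely via the annulus $[14\nu_{q+1},35\nu_{q+1}]$, getting $70\nu_{q+1}+2M_q$, whereas the paper counts carriers and gets $4K_{q+1}+4\nu_{q+1}+3M_q$; both lie below $5K_{q+1}=80\nu_{q+1}$.
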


\begin{proof}
The linear errors have the same spatial support as $w_{q+1}$ given by \cref{lem:packet-support}.  The low-mode part $E_q(1-\Theta_{q+1}^3)$ is supported in $[-M_q-6\nu_{q+1},M_q+6\nu_{q+1}]$.  By \eqref{eq:H-support}, the high oscillation terms in \eqref{eq:osc-exact} lie within a radius of $6\nu_{q+1}+3M_q$, centered at one of the carriers $K_{q+1},2K_{q+1},3K_{q+1}$.

For the Nash terms, the largest carrier mode occurs in $\overline{u_q}w_{q+1}^2$.  Its carrier is $4K_{q+1}$, its slab width is at most $4\nu_{q+1}$, and all old modulations contribute at most $3M_q$.  Thus its frequencies have magnitude below
\[
 4K_{q+1}+4\nu_{q+1}+3M_q<5K_{q+1}.
\]
All other mixed terms are smaller.  This proves the spatial support claim.  Every term in \eqref{eq:error-decomp} is supported in time where $u_q$, $E_q$, or $h_{q+1}$ is supported, hence in $I_{q+1}$.
\end{proof}

We continue to estimate the four classes of errors. Recall that we have already fixed the parameters $s > 3$ and $\beta < \frac{1}{3}$ in \eqref{eq:s-beta}.

\begin{lemma}[Dispersion and temporal errors]\label{lem:linear-errors}
One has
\begin{equation}\label{eq:dispersion-error}
 \|E_{D,q+1}\|_{\A^{-s}_{x,t}}
 \leq C_q\lambda_{q+1}^{-\frac23(1-\eps_{q+1})},
\end{equation}
and
\begin{equation}\label{eq:temporal-error}
 \|E_{T,q+1}\|_{\A^{-s}_{x,t}}
 \leq C_q\left(
 \lambda_{q+1}^{-\frac23(1-\eps_{q+1})}
 +\lambda_{q+1}^{\beta-\frac23(1-\eps_{q+1})}
 \right).
\end{equation}
\end{lemma}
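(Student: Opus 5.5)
The dispersion estimate is a direct application of \cref{lem:Wiener}\textup{(ii)} with $m=2$. Since $s>3=m+1$, \eqref{eq:derivative-L1-to-A} gives
\[
 \|E_{D,q+1}\|_{\A^{-s}_{x,t}}=\|\partial_x^2w_{q+1}\|_{\A^{-s}_{x,t}}\lesssim_s\|w_{q+1}\|_{C_t^0L_x^1}.
\]
The right-hand side is bounded by $C_q\lambda_{q+1}^{-\frac23(1-\eps_{q+1})}$ by \eqref{eq:w-L1}, which is \eqref{eq:dispersion-error}. This is exactly where the negative Wiener topology absorbs the two derivatives that obstruct the $L^1$ scheme described in the introduction. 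No use of the large carrier frequency is needed.

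For the temporal error, I will differentiate $w_{q+1}=h_{q+1}\Theta_{q+1}V_{q+1}$ in time. Since $\Theta_{q+1}$ is time-independent and $a_{q+1}$ is constant, this gives
\[
 \partial_tw_{q+1}=(\partial_th_{q+1})\Theta_{q+1}V_{q+1}+h_{q+1}\Theta_{q+1}(\partial_tb_{q+1})e^{4\pi iK_{q+1}x}.
\]
Each piece is bounded in $\A^{-s}_{x,t}$ using \eqref{eq:derivative-L1-to-A} with $m=0$, which requires only $s>1$. It therefore suffices to bound each piece in $C_t^0L_x^1$.

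For the first piece, I use $|V_{q+1}|\le 2a_{q+1}$, the cutoff bound $\|\partial_th_{q+1}\|_{L^\infty_t}\lesssim\lambda_{q+1}^\beta$ from \eqref{eq:h-properties}, and the slab bound \eqref{eq:Theta-L1-L2}. Together these give $C_q\lambda_{q+1}^{\beta-\frac23(1-\eps_{q+1})}$.

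For the second piece, $\partial_tb_{q+1}=-\sigma a_{q+1}^{-2}\overline{\partial_tE_q}$. Because $E_q$ is smooth, $\|\partial_tb_{q+1}\|_{C^0_tL^\infty_x}$ is a constant depending only on stage $q$ (and on $A_{q+1}$), and so it is absorbed into $C_q$. Combined with $\|\Theta_{q+1}\|_{L^1}$, this yields $C_q\lambda_{q+1}^{-\frac23(1-\eps_{q+1})}$. Summing the two pieces gives \eqref{eq:temporal-error}.

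The only point needing care is that every constant is independent of $\lambda_{q+1}$: the time-derivative of the amplitude must depend only on the frozen stage $q$. This holds because $b_{q+1}$ is built from $E_q$ alone and does not involve $\lambda_{q+1}$, and $h_{q+1}$ contributes only the explicit factor $\lambda_{q+1}^\beta$. As an alternative, one could use \eqref{eq:carrier-Wiener-general} to gain a factor $K_{q+1}^{-s}$, but the $L^1$ route already gives the stated bounds.
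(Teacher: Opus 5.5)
Your proposal is correct and takes essentially the same route as the paper. For the dispersion error, the paper writes out the sum $\sum_k\langle k\rangle^{2-s}\|\widehat{w}_{q+1}(\cdot,k)\|_{C_t^0}$ explicitly, which is exactly \eqref{eq:derivative-L1-to-A} with $m=2$, and combines it with \eqref{eq:w-L1}. For the temporal error, it uses the same product-rule splitting of $\partial_t w_{q+1}$ together with \eqref{eq:derivative-L1-to-A} for $m=0$, the bound $\|\partial_t h_{q+1}\|_{L^\infty_t}\lesssim\lambda_{q+1}^\beta$, the stage-$q$ bound on $\partial_t b_{q+1}$, and \eqref{eq:Theta-L1-L2}.
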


\begin{proof}
By the definition of the spacetime Wiener norm,
\[
\begin{aligned}
 \|E_{D,q+1}\|_{\A^{-s}_{x,t}}
 &= \|\partial_x^2w_{q+1}\|_{\A^{-s}_{x,t}} \\
 &\lesssim
 \sum_{k\in\Z}
 \langle k\rangle^{2-s}
 \big\|\widehat{w}_{q+1}(\cdot,k)\big\|_{C_t^0}.
\end{aligned}
\]
Since
\[
 \big\|\widehat{w}_{q+1}(\cdot,k)\big\|_{C_t^0}
 \leq \|w_{q+1}\|_{C_t^0L_x^1}
\]
for every \(k\in\Z\), and
\[
 \sum_{k\in\Z}\langle k\rangle^{2-s}<\infty
\]
because \(s>3\), we obtain
\[
 \|E_{D,q+1}\|_{\A^{-s}_{x,t}}
 \lesssim_s \|w_{q+1}\|_{C_t^0L_x^1}
 \leq
 C_q\lambda_{q+1}^{-\frac23(1-\eps_{q+1})}
\]
by \eqref{eq:w-L1}. For the temporal error $E_{T,q+1}$, notice that only $h_{q+1}$ and $b_{q+1}$ depend on time, so
\[
 \partial_t w_{q+1}
 =h_{q+1}'\Theta_{q+1} V_{q+1}
 +h_{q+1}\Theta_{q+1}(\partial_t b_{q+1})e^{4\pi iK_{q+1}x}.
\]
The $L^\infty$ norms of $V_{q+1}$ and $\partial_t b_{q+1}$ depend only on the stage-$q$ quantities. Using \eqref{eq:derivative-L1-to-A} with $m=0$, \eqref{eq:h-properties}, and \eqref{eq:Theta-L1-L2},
\[
 \|E_{T,q+1}\|_{\A^{-s}_{x,t}}
 \lesssim_q(1+\lambda_{q+1}^{\beta})\|\Theta_{q+1}\|_{L^1}
 \leq C_q(1+\lambda_{q+1}^\beta)\lambda_{q+1}^{-\frac23(1-\eps_{q+1})}.
\]
\end{proof}

\begin{lemma}[Nash error]\label{lem:Nash-error}
The Nash error can be bounded by
\begin{equation}\label{eq:Nash-error}
 \|E_{N,q+1}\|_{\A^{-s}_{x,t}}
 \leq C_q\left(
 \lambda_{q+1}^{-\frac23(1-\eps_{q+1})}
 +\lambda_{q+1}^{-\frac13(1-\eps_{q+1})}
 \right).
\end{equation}
\end{lemma}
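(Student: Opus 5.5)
The plan is to bound each term of $E_{N,q+1}$ in $\A^{-s}_{x,t}$ by $C_t^0L_x^1$ norms via \eqref{eq:derivative-L1-to-A} with $m=0$, which is valid since $s>1$. The four terms in \eqref{eq:error-decomp} fall into two groups according to how many factors of $w_{q+1}$ they contain, and these two groups produce the two powers in \eqref{eq:Nash-error}.

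\emph{Terms linear in $w_{q+1}$.} For $2|u_q|^2w_{q+1}$ and $u_q^2\overline{w_{q+1}}$, Hölder's inequality gives
\[
\|2|u_q|^2w_{q+1}+u_q^2\overline{w_{q+1}}\|_{C_t^0L_x^1}\le 3\|u_q\|_{C_t^0L_x^\infty}^2\|w_{q+1}\|_{C_t^0L_x^1}.
\]
Here $\|u_q\|_{C_t^0L_x^\infty}$ is a stage-$q$ quantity because $u_q$ is a smooth trigonometric polynomial, so it is absorbed into $C_q$. By \eqref{eq:w-L1} these terms then contribute $C_q\lambda_{q+1}^{-\frac23(1-\eps_{q+1})}$.

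\emph{Terms quadratic in $w_{q+1}$.} For $2u_q|w_{q+1}|^2$ and $\overline{u_q}w_{q+1}^2$, the bound is
\[
\|u_q\|_{C_t^0L_x^\infty}\|w_{q+1}\|_{C_t^0L_x^2}^2,
\]
and \eqref{eq:w-L2} gives $C_q\lambda_{q+1}^{-\frac13(1-\eps_{q+1})}$. One could also write this via \eqref{eq:Theta-L1-L2}: $|w_{q+1}|^2\le 4a_{q+1}^2\Theta_{q+1}^2$, and $\int\Theta_{q+1}^2\lesssim\lambda_{q+1}^{-\frac13(1-\eps_{q+1})}$.

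Summing the two groups yields \eqref{eq:Nash-error}. There is no real obstacle here. The only point to check is that the constants depend only on $\|u_q\|_{L^\infty}$, $a_{q+1}$ and $\sup|b_{q+1}|\le a_{q+1}$, so they are independent of $\lambda_{q+1}$, consistent with the convention for $C_q$.

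The quadratic terms are the weaker ones, since the decay $\lambda_{q+1}^{-\frac13(1-\eps_{q+1})}$ is slower. A sharper argument would use the carrier structure: $\overline{u_q}w_{q+1}^2$ is supported near $2K_{q+1},3K_{q+1},4K_{q+1}$, so \cref{lem:carrier-slab}(iii) gives $K_{q+1}^{-s}$ decay there. Likewise the nonzero slab modes of $|w_{q+1}|^2$ gain $\mu_{q+1}^{-s}$ through \eqref{eq:Theta-Pneq}. This refinement is not needed, however, because the stated bound only requires the crude $L^1$ estimate.
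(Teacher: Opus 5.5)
Your proposal is correct and is essentially the paper's proof: Hölder bounds the linear terms by $\|u_q\|_{C_t^0L_x^\infty}^2\|w_{q+1}\|_{C_t^0L_x^1}$ and the quadratic terms by $\|u_q\|_{C_t^0L_x^\infty}\|w_{q+1}\|_{C_t^0L_x^2}^2$. These are then passed to $\A^{-s}_{x,t}$ via \eqref{eq:derivative-L1-to-A} with $m=0$ and combined with \eqref{eq:w-L1}--\eqref{eq:w-L2}. One small caveat on your optional refinement: the zero slab mode of $|w_{q+1}|^2$ still costs $\int_{\T}\Theta_{q+1}^2\,\dd x\sim\lambda_{q+1}^{-\frac13(1-\eps_{q+1})}$, so it would not actually improve the leading power. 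This does not affect the proof, since you do not rely on it.
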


\begin{proof}
Since the function $u_q$ is fixed, its $C_t^0L_x^\infty$ norm is absorbed into the constant $C_q$.  By H\"older's inequality,
\[
\begin{aligned}
 \big\||u_q|^2w_{q+1}\big\|_{C_t^0L_x^1}
 &\leq
 \|u_q\|_{C_t^0L_x^\infty}^2
 \|w_{q+1}\|_{C_t^0L_x^1},\\
 \big\|u_q^2\overline{w_{q+1}}\big\|_{C_t^0L_x^1}
 &\leq
 \|u_q\|_{C_t^0L_x^\infty}^2
 \|w_{q+1}\|_{C_t^0L_x^1}.
\end{aligned}
\]
Similarly,
\[
\begin{aligned}
 \big\|u_q|w_{q+1}|^2\big\|_{C_t^0L_x^1}
 &\leq
 \|u_q\|_{C_t^0L_x^\infty}
 \|w_{q+1}\|_{C_t^0L_x^2}^2,\\
 \big\|\overline{u_q}w_{q+1}^2\big\|_{C_t^0L_x^1}
 &\leq
 \|u_q\|_{C_t^0L_x^\infty}
 \|w_{q+1}\|_{C_t^0L_x^2}^2.
\end{aligned}
\]
It follows from \eqref{eq:error-decomp} and \eqref{eq:derivative-L1-to-A} with $m=0$ that
\[
 \|E_{N,q+1}\|_{\A^{-s}_{x,t}}
 \leq C_q\left(
 \|w_{q+1}\|_{C_t^0L_x^1}
 +\|w_{q+1}\|_{C_t^0L_x^2}^2
 \right).
\]
Utilizing \eqref{eq:w-L1} and \eqref{eq:w-L2}, we obtain
\[
 \|E_{N,q+1}\|_{\A^{-s}_{x,t}}
 \leq C_q\left(
 \lambda_{q+1}^{-\frac23(1-\eps_{q+1})}
 +\lambda_{q+1}^{-\frac13(1-\eps_{q+1})}
 \right),
\]
which proves \eqref{eq:Nash-error}.
\end{proof}

\begin{lemma}[Oscillation error]\label{lem:oscillation-error}
The low-output part satisfies
\begin{equation}\label{eq:principal-osc}
 \|E_q(1-\Theta_{q+1}^3)\|_{\A^{-s}_{x,t}}
 \leq C_q\delta_q\mu_{q+1}^{-s}.
\end{equation}
The high oscillation part satisfies
\begin{equation}\label{eq:carrier-osc}
 \left\|
 h_{q+1}^3\Theta_{q+1}^3H_{\ell,q+1}e^{2\pi i\ell K_{q+1}x}
 \right\|_{\A^{-s}_{x,t}}
 \leq C_q K_{q+1}^{-s}\lambda_{q+1}^{1-\eps_{q+1}},
 \qquad \ell=1,2,3.
\end{equation}
\end{lemma}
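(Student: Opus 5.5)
For the low-output bound \eqref{eq:principal-osc}, I will write $1-\Theta_{q+1}^3=-\Pnz(\Theta_{q+1}^3)$. This uses the exact cubic moment \eqref{eq:Theta-cubic}, which says that $\widehat{\Theta_{q+1}^3}(0)=1$. The product $E_q\Pnz(\Theta_{q+1}^3)$ is a product of time-dependent trigonometric polynomials, so the weighted product estimate \eqref{eq:Wiener-product} applies with $f=E_q$ and $g=\Pnz(\Theta_{q+1}^3)$:
\[
 \|E_q(1-\Theta_{q+1}^3)\|_{\A^{-s}_{x,t}}
 \lesssim_s
 \|E_q\|_{\A^{s}_{x,t}}\,\|\Pnz(\Theta_{q+1}^3)\|_{\A^{-s}_x}.
\]
Here $\Theta_{q+1}$ is time independent, so its spacetime and spatial Wiener norms coincide. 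The first factor is controlled by \eqref{eq:old-error-positive-Wiener}, which gives $\langle M_q\rangle^{2s}\delta_q$. The second is controlled by \eqref{eq:Theta-Pneq} with $j=3$, which gives $\lesssim_s\mu_{q+1}^{-s}$, because the prefactor $\lambda^{(1-\eps)(3/3-1)}=1$. Absorbing $\langle M_q\rangle^{2s}$ into $C_q$ yields $C_q\delta_q\mu_{q+1}^{-s}$. An alternative is to estimate directly via the support separation $\mu_{q+1}>100M_q$: every output frequency $\mu_{q+1}k+\xi$ with $k\neq0$ and $|\xi|\le M_q$ has size $\gtrsim\mu_{q+1}|k|$. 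I will keep this in reserve in case the Peetre-based constant needs checking.

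For the high-output bound \eqref{eq:carrier-osc}, I will apply \cref{lem:carrier-slab}(iii) with carrier $\ell K_{q+1}$, profile $\Theta_{q+1}^3$, time factor $h_{q+1}^3$ and amplitude $c=H_{\ell,q+1}$. First I check the support hypothesis, using the same argument as in \cref{lem:carrier-slab}(iii) and \eqref{eq:modulation-Wiener}. The function $\Theta_{q+1}^3$ has Fourier support in $[-6\nu_{q+1},6\nu_{q+1}]$, and by \eqref{eq:H-support} the amplitude has support in $[-3M_q,3M_q]$. Hence the unmodulated product is supported in $|n|\le 6\nu_{q+1}+3M_q<7\nu_{q+1}\le \ell K_{q+1}/2=8\ell\nu_{q+1}$. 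Then \eqref{eq:modulation-Wiener} gives the bound
\[
 \lesssim_s (\ell K_{q+1})^{-s}\,\|h_{q+1}^3\Theta_{q+1}^3H_{\ell,q+1}\|_{\A^0_{x,t}}.
\]
To finish, I bound the $\A^0_{x,t}$ norm by the algebra property of $\A^0$, using $|h_{q+1}|\le1$:
\[
 \|h_{q+1}^3\Theta_{q+1}^3H_{\ell,q+1}\|_{\A^0_{x,t}}\le\|\Theta_{q+1}^3\|_{\A^0_x}\,\|H_{\ell,q+1}\|_{\A^0_{x,t}}.
\]
By \eqref{eq:Theta-A0} with $j=3$, the first factor is $\lesssim\lambda_{q+1}^{1-\eps_{q+1}}$. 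The second factor is a finite stage-$q$ quantity, a polynomial in $a_{q+1}$ and the Wiener norms of $b_{q+1}$, so it goes into $C_q$.

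I do not expect a serious obstacle. The only points needing care are the exact vanishing of the zero mode through the normalization \eqref{eq:Theta-cubic}, which makes the principal term gain $\mu_{q+1}^{-s}$ rather than merely $O(1)$, and the verification of the support condition $6\nu_{q+1}+3M_q\le \ell K_{q+1}/2$. The choice $K_{q+1}=16\nu_{q+1}$ and the separation \eqref{eq:separation-step} make both immediate.
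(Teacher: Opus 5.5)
Your proposal is correct and matches the paper's proof step for step. For the low-output bound, the exact cubic normalization turns $1-\Theta_{q+1}^3$ into $-\Pnz(\Theta_{q+1}^3)$, and then \eqref{eq:Wiener-product}, \eqref{eq:old-error-positive-Wiener}, and \eqref{eq:Theta-Pneq} with $j=3$ finish it; for the high-output bound, the support check $6\nu_{q+1}+3M_q\le \ell K_{q+1}/2$, \eqref{eq:modulation-Wiener}, and \eqref{eq:Theta-A0} finish it, with your explicit use of the $\A^0$ algebra property to absorb $\|H_{\ell,q+1}\|_{\A^0_{x,t}}$ into $C_q$ merely spelling out a step the paper leaves implicit.
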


\begin{proof}
The zero Fourier coefficient of $\Theta_{q+1}^3$ is one by \eqref{eq:Theta-cubic}, so
\[
 1-\Theta_{q+1}^3=-\Pnz(\Theta_{q+1}^3).
\]
The product estimate \eqref{eq:Wiener-product}, the positive-Wiener bound \eqref{eq:old-error-positive-Wiener}, and \eqref{eq:Theta-Pneq} with $j=3$ give
\[
\begin{aligned}
 \|E_q(1-\Theta_{q+1}^3)\|_{\A^{-s}_{x,t}}
 &\lesssim_s
 \|E_q\|_{\A^s_{x,t}}
 \|\Pnz(\Theta_{q+1}^3)\|_{\A^{-s}_x}\\
 &\leq C_q\delta_q\mu_{q+1}^{-s}.
\end{aligned}
\]
For the high oscillatory terms, \eqref{eq:H-support} and \eqref{eq:Theta-Fourier-support} imply
\[
 \supp\widehat{\Theta_{q+1}^3H_{\ell,q+1}}
 \subset[-6\nu_{q+1}-3M_q,6\nu_{q+1}+3M_q]
 \subset[-K_{q+1}/2,K_{q+1}/2].
\]
Apply \eqref{eq:modulation-Wiener} and then \eqref{eq:Theta-A0}:
\[
 \left\|h_{q+1}^3\Theta_{q+1}^3H_{\ell,q+1}e^{2\pi i\ell K_{q+1}x}\right\|_{\A^{-s}_{x,t}}
 \leq C_q K_{q+1}^{-s}\|\Theta_{q+1}^3\|_{\A_x^0}
 \leq C_q K_{q+1}^{-s}\lambda_{q+1}^{1-\eps_{q+1}}.
\]
\end{proof}

Combining the preceding estimates gives the overall error estimate for $E_{q+1}$.

\begin{proposition}\label{prop:one-step-residual}
Under the assumptions in \cref{prop:main-inductive},
\begin{equation}\label{eq:one-step-residual}
 \|E_{q+1}\|_{\A^{-s}_{x,t}}
 \leq C_q\left[
 \delta_q\mu_{q+1}^{-s}
 +K_{q+1}^{-s}\lambda_{q+1}^{1-\eps_{q+1}}
 +\lambda_{q+1}^{-\frac13(1-\eps_{q+1})}
 +\lambda_{q+1}^{\beta-\frac23(1-\eps_{q+1})}
 \right].
\end{equation}
Each power of $\lambda_{q+1}$ on the right is negative.
\end{proposition}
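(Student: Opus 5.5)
The estimate follows from the decomposition \eqref{eq:error-decomp}, the triangle inequality in $\A^{-s}_{x,t}$, and the four lemmas just proved. We write
\[
 \|E_{q+1}\|_{\A^{-s}_{x,t}}
 \leq \|E_{O,q+1}\|_{\A^{-s}_{x,t}}+\|E_{N,q+1}\|_{\A^{-s}_{x,t}}
 +\|E_{D,q+1}\|_{\A^{-s}_{x,t}}+\|E_{T,q+1}\|_{\A^{-s}_{x,t}}.
\]
For the oscillation error we use the exact identity \eqref{eq:osc-exact}, which relies on $h_{q+1}^3E_q=E_q$ from \eqref{eq:hE} and on the polarization \eqref{eq:V-cubic}. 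It splits $E_{O,q+1}$ into the low-output part $E_q(1-\Theta_{q+1}^3)$ and three carrier terms with $\ell=1,2,3$. The low-output part is bounded by \eqref{eq:principal-osc}, giving $C_q\delta_q\mu_{q+1}^{-s}$. Each carrier term is bounded by \eqref{eq:carrier-osc}, and since $|\sigma|=1$ the three of them contribute $3C_qK_{q+1}^{-s}\lambda_{q+1}^{1-\eps_{q+1}}$.

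The remaining errors are bounded as follows.
\begin{itemize}
 \item \Cref{lem:Nash-error} bounds the Nash error by $C_q(\lambda_{q+1}^{-\frac23(1-\eps_{q+1})}+\lambda_{q+1}^{-\frac13(1-\eps_{q+1})})$.
 \item \Cref{lem:linear-errors} bounds the dispersion error by $C_q\lambda_{q+1}^{-\frac23(1-\eps_{q+1})}$.
 \item \Cref{lem:linear-errors} bounds the temporal error by $C_q(\lambda_{q+1}^{-\frac23(1-\eps_{q+1})}+\lambda_{q+1}^{\beta-\frac23(1-\eps_{q+1})})$.
\end{itemize}
Since $\lambda_{q+1}\geq1$, the term $\lambda_{q+1}^{-\frac23(1-\eps_{q+1})}$ is dominated by $\lambda_{q+1}^{-\frac13(1-\eps_{q+1})}$. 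After enlarging $C_q$, all contributions fit into the four terms of \eqref{eq:one-step-residual}.

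It remains to check that every power of $\lambda_{q+1}$ on the right-hand side is negative. Write $\eps=\eps_{q+1}\leq\frac12$.
\begin{itemize}
 \item The first term is $\mu_{q+1}^{-s}=\lambda_{q+1}^{-s\eps}$, which is negative since $s>0$ and $\eps>0$.
 \item For the second term, $K_{q+1}=16\lambda_{q+1}^{1+\eps}$, so its exponent is $-s(1+\eps)+1-\eps<0$. This holds because $s>3>1$.
 \item The third exponent is $-\frac13(1-\eps)<0$ because $\eps<1$.
 \item The fourth exponent is $\beta-\frac23(1-\eps)\leq\beta-\frac13$, using $\eps\leq\frac12$. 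This is negative by the standing assumption $\beta<\frac13$ in \eqref{eq:s-beta}.
\end{itemize}

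The fourth exponent is the only point needing care: the restriction $\beta<1/3$ is exactly what the worst intermittency exponent $\eps_1=1/2$ requires. Otherwise the argument is bookkeeping, since the lemmas already supply every bound. One should also confirm that each constant $C_q$ is independent of $\lambda_{q+1}$, which the lemmas guarantee.
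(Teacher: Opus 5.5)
Your proposal is correct and follows essentially the same route as the paper. You apply the triangle inequality to the decomposition \eqref{eq:error-decomp}, insert \cref{lem:linear-errors,lem:Nash-error,lem:oscillation-error}, absorb $\lambda_{q+1}^{-\frac23(1-\eps_{q+1})}$ into $\lambda_{q+1}^{-\frac13(1-\eps_{q+1})}$, and check the four exponents using $s>3$, $\eps_{q+1}\leq\frac12$, and $\beta<\frac13$, exactly as the paper does.
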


\begin{proof}
The estimate follows from \cref{lem:linear-errors,lem:Nash-error,lem:oscillation-error}; the omitted powers $\lambda_{q+1}^{-\frac23(1-\eps_{q+1})}$ are dominated by $\lambda_{q+1}^{-\frac13(1-\eps_{q+1})}$.  Since $\mu_{q+1}=\lambda_{q+1}^{\eps_{q+1}}$ and $K_{q+1}=16\lambda_{q+1}^{1+\eps_{q+1}}$, the first two powers are
\[
 -s\eps_{q+1}
 \qquad\text{and}\qquad
 1-\eps_{q+1}-(1+\eps_{q+1})s.
\]
They are negative, as the fixed factor $\delta_q$ in the first term does not affect its $\lambda_{q+1}$ exponent.  The Nash exponent is negative, and
\[
 \beta-\frac23(1-\eps_{q+1})
 \leq\beta-\frac13<0
\]
because $\eps_{q+1}\leq1/2$ and $\beta<1/3$.
\end{proof}

\subsection{Fourier summability}\label{subsec:absolute}

This subsection proves the estimate \eqref{eq:CS-induction-main} that defines the limiting cubic product. For $j=1,2$, define the sequence of old Fourier coefficients
\begin{equation}\label{eq:cj-def}
 c_{1,q+1}(t,\xi)=a_{q+1}\one_{\{\xi=0\}},
 \qquad
 c_{2,q+1}(t,\xi)=\widehat{b_{q+1}}(t,\xi).
\end{equation}
Then
\begin{equation}\label{eq:packet-Fourier-formula}
 \widehat{w_{q+1}^{(j)}}(t,n)
 =h_{q+1}(t)
 \sum_{\eta\in\mu_{q+1}\Z,\ |\eta|\leq2\nu_{q+1}}
 \widehat{\Theta_{q+1}}(\eta)c_{j,q+1}(t,n-jK_{q+1}-\eta).
\end{equation}
The sequence $c_{2,q+1}$ is supported in $[-M_q,M_q]$, whereas $c_{1,q+1}$ is supported only at $\{0\}$.  Since $100 M_q<\mu_{q+1}$, the intervals
\[
 \eta+[-M_q,M_q],
 \qquad \eta\in\mu_{q+1}\Z,
\]
are pairwise disjoint.  Hence, for every $j\in\{1,2\}$ and $n\in\Z$, at most one summand on the right-hand side of \eqref{eq:packet-Fourier-formula} is nonzero. Since the old stage is fixed, we have
\begin{equation}\label{eq:cj-A0-fixed}
 \sum_\xi\|c_{j,q+1}(\cdot,\xi)\|_{C_t^0}
 +\sum_k\|\widehat{u_q}(\cdot,k)\|_{C_t^0}
 \leq C_q.
\end{equation}

The expansion of the cubic functional $\CS_s(u_{q+1}) = \CS_s(u_q+w_{q+1})$ contains exactly eight placements:
\begin{equation}\label{eq:CS-eight}
 \begin{aligned}
    \CS_s(u_q+w_{q+1})
 \leq{}&\CS_s(u_q,u_q,u_q)\\
 &+\CS_s(w_{q+1},u_q,u_q)+\CS_s(u_q,w_{q+1},u_q)+\CS_s(u_q,u_q,w_{q+1})\\
 &+\CS_s(w_{q+1},w_{q+1},u_q)+\CS_s(w_{q+1},u_q,w_{q+1})+\CS_s(u_q,w_{q+1},w_{q+1})\\
 &+\CS_s(w_{q+1},w_{q+1},w_{q+1})\\
 &:= \CS_s(u_q,u_q,u_q) + I_1 + I_2 + I_3,
 \end{aligned}
\end{equation}
where 
\begin{align}
I_1&=\CS_s(w_{q+1},u_q,u_q)+\CS_s(u_q,w_{q+1},u_q)+
\CS_s(u_q,u_q,w_{q+1}), \label{def:I_1} \\
I_2 &= \CS_s(w_{q+1},w_{q+1},u_q)+\CS_s(w_{q+1},u_q,w_{q+1})+\CS_s(u_q,w_{q+1},w_{q+1}), \label{def:I_2} \\
I_3 &= \CS_s(w_{q+1},w_{q+1},w_{q+1}). \label{def:I_3}
\end{align}
We will continue to estimate $I_1$, $I_2$ and $I_3$ separately. Recall from \eqref{def:absolute-cubic} that, for example,
\[
\CS_s(\omega_{q+1},u_q,u_q) = \sum_{n_1,n_2,n_3\in\Z}
 \wt{n_1-n_2+n_3}^{-s}
 \left\|
 \widehat \omega_{q+1}(\cdot,n_1)
 \overline{\widehat u_q(\cdot,n_2)}
 \widehat u_q(\cdot,n_3)
 \right\|_{C_t^0}.
\]
Hence, to estimate $I_k,\  k = 1,2,3$, it will be helpful to first identify the Fourier support of the cubic interaction. This is discussed in the next lemma.

\subsubsection{Fourier support of cubic functionals}
We label the three slots of a cubic interaction
$f_1\overline{f_2}f_3$ by $1,2,3$, respectively.
Whenever slot $r$ contains the perturbation $\omega_{q+1}$, we use the expansion $w_{q+1}=w_{q+1}^{(1)}+w_{q+1}^{(2)}$ from \eqref{eq:packets} and denote the
selected packet by $w_{q+1}^{(j_r)}$, where $j_r\in\{1,2\}$.
Thus $j_r$ specifies the packet occupying slot $r$,
whose carrier frequency is $j_rK_{q+1}$.
The second slot is conjugated, so its carrier enters
the output frequency with a minus sign.

\begin{lemma}\label{lem:carrier-separation}
Suppose $K_{q+1}=16\nu_{q+1}$ and $\mu_{q+1}>100M_q$.
Let $I_1,I_2,I_3$ be given by \eqref{def:I_1}-\eqref{def:I_3}.
Then the following statements hold.
\begin{enumerate}[label=\textup{(\roman*)}]
 \item Every interaction in $I_1$ has output frequency
 of magnitude at least $K_{q+1}/2$.

 \item In $I_2$, the carrier vanishes precisely when the new
 factors occupy slots $(1,2)$ with $j_1=j_2$, or slots $(2,3)$
 with $j_2=j_3$. The remaining slab frequency is
 $\eta_1-\eta_2$ or $-\eta_2+\eta_3$, respectively.
 Every other interaction has output frequency of magnitude
 at least $K_{q+1}/2$.

 \item In $I_3$, the carrier vanishes only for
 $(j_1,j_2,j_3)=(1,2,1)$.
 For this interaction, the output frequency is
 $\eta_1-\eta_2+\eta_3-\xi_2$, where $\xi_2$ is a Fourier
 frequency of $b_{q+1}$.
 Every other packet triple has output frequency of magnitude
 at least $K_{q+1}/2$.
\end{enumerate}
\end{lemma}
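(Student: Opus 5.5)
The plan is a direct frequency bookkeeping argument built on \cref{lem:packet-support}. Every frequency of $u_q$ lies in $[-M_q,M_q]$. By \eqref{eq:packet-support-1}--\eqref{eq:packet-support-2}, every frequency of the packet $w_{q+1}^{(j)}$ can be written as $jK_{q+1}+\eta+\xi$, where $\eta\in\mu_{q+1}\Z$, $|\eta|\leq2\nu_{q+1}$, and $|\xi|\leq M_q$; here $\xi=0$ when $j=1$. For an interaction $f_1\overline{f_2}f_3$ the output frequency is $n_1-n_2+n_3$. Substituting these representations splits it into three parts:
\begin{itemize}
 \item a carrier part $cK_{q+1}$, where $c$ is the signed sum of the $j_r$ over the slots holding new factors, with a minus sign on slot $2$;
 \item a slab part, which is a signed sum of at most three $\eta$'s and hence has magnitude at most $6\nu_{q+1}$;
 \item an old-frequency part, of magnitude at most $3M_q$.
\end{itemize}

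The key quantitative step is this. Whenever $c\neq0$, we have $|c|K_{q+1}\geq K_{q+1}=16\nu_{q+1}$, while the perturbation from the other two parts is at most $6\nu_{q+1}+3M_q<7\nu_{q+1}$, because $M_q<\mu_{q+1}/100\leq\nu_{q+1}/100$. The output therefore has magnitude at least $16\nu_{q+1}-7\nu_{q+1}=9\nu_{q+1}\geq K_{q+1}/2$. The three items then reduce to identifying exactly when $c=0$.

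\textbf{Item (i).} There is a single new factor, so $c=\pm j_r$ with $j_r\in\{1,2\}$. Thus $c\neq0$ always, and every output has magnitude at least $K_{q+1}/2$.

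\textbf{Item (ii).} The placements $(1,2)$, $(1,3)$, $(2,3)$ give $c=j_1-j_2$, $j_1+j_3$, $-j_2+j_3$ respectively. The middle value is at least $2$. The others vanish exactly when the two packets coincide. In that case the slab part is $\eta_1-\eta_2$ or $-\eta_2+\eta_3$, as claimed.

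\textbf{Item (iii).} We have $c=j_1-j_2+j_3$ with $j_r\in\{1,2\}$. Since $j_1+j_3\geq2$ and $j_2\leq2$, $c=0$ forces $j_2=2$ and $j_1=j_3=1$, so $(j_1,j_2,j_3)=(1,2,1)$. In that case slots $1$ and $3$ are $w^{(1)}$ packets with no $\xi$ term, and slot $2$ carries $2K_{q+1}+\eta_2+\xi_2$ with $\xi_2\in\supp\widehat{b_{q+1}}$. The output is therefore $\eta_1-\eta_2+\eta_3-\xi_2$.

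The only delicate point is the uniform margin estimate, in particular the factor $6\nu_{q+1}$ coming from three slab frequencies of size up to $2\nu_{q+1}$. This is exactly why the choice $K_{q+1}=16\nu_{q+1}$ suffices, and nothing beyond the support bounds already established is needed.
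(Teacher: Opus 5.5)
Your proposal is correct and follows essentially the same route as the paper. Like the paper, you write each output as $cK_{q+1}+R$ with $|R|\leq 6\nu_{q+1}+3M_q<K_{q+1}/2$, so that any nonzero integer carrier forces magnitude at least $K_{q+1}/2$, and you then determine from the carrier coefficients exactly when $c=0$; your short argument that $j_1-j_2+j_3=0$ forces $(1,2,1)$ simply replaces the paper's full carrier table.
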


\begin{proof}
A Fourier frequency of the perturbation $w_{q+1}^{(j_r)}$ has the form
\[
 n_r=j_rK_{q+1}+\eta_r+\xi_r,
 \qquad
 \eta_r\in\mu_{q+1}\Z,
 \qquad
 |\eta_r|\leq2\nu_{q+1}.
\]
Here $\xi_r=0$ if $j_r=1$, since the first packet has the
spatially constant amplitude $a_{q+1}$.
If $j_r=2$, then $\xi_r$ is a Fourier frequency of $b_{q+1}$
and satisfies $|\xi_r|\leq M_q$.
We denote the frequencies of the old factor $u_q$ by $k_r$, and so $|k_r|\leq M_q$.

The output frequency of the cubic interaction is $n_1-n_2+n_3$.
For an interaction containing $m$ perturbations, it can be written as
\[
 n=cK_{q+1}+R,
 \qquad
 |R|\leq2m\nu_{q+1}+3M_q<K_{q+1}/2,
 \qquad 1\leq m\leq3,
\]
where $c$ is the sum of the packet labels.
The last inequality follows from $\mu_{q+1}\leq\nu_{q+1}$,
$\mu_{q+1}>100M_q$, and $K_{q+1}=16\nu_{q+1}$.
Since $c$ is an integer, every nonzero carrier gives
\[
 |n|\geq |c|K_{q+1}-|R|\geq K_{q+1}/2.
\]

For terms appearing in $I_1$, $c=j_r$ for $r=1,3$ and $c=-j_2$
for $r=2$. In every case $c\neq0$, proving \textup{(i)}. In the case of $I_2$, the three placements have carrier coefficients
\begin{equation}\label{eq:two-carrier-placement-table}
 \begin{array}{c|c}
 \text{perturbation slots}&\text{carrier coefficient}\\
 \hline
 (1,2)&c = j_1-j_2\\
 (1,3)&c= j_1+j_3\\
 (2,3)&c= j_3-j_2.
 \end{array}
\end{equation}
For example, the first row corresponds to
$w_{q+1}^{(j_1)}\overline{w_{q+1}^{(j_2)}}u_q$,
whose output frequency is
\[
 n=(j_1-j_2)K_{q+1}
   +(\eta_1-\eta_2)+(\xi_1-\xi_2)+k_3.
\]
The carrier vanishes exactly when $j_1=j_2$.
The third row is analogous, with remaining slab frequency
$-\eta_2+\eta_3$.
In the middle row, $j_1+j_3\geq2$, so the carrier cannot vanish.
This proves \textup{(ii)}.

The interaction in $I_3$ is given by
$w_{q+1}^{(j_1)}\overline{w_{q+1}^{(j_2)}}w_{q+1}^{(j_3)}$,
and its output frequency is
\[
 n=(j_1-j_2+j_3)K_{q+1}
   +(\eta_1-\eta_2+\eta_3)
   +(\xi_1-\xi_2+\xi_3).
\]
The complete carrier table is
\begin{equation}\label{eq:three-carrier-table}
 \begin{array}{c|cccccccc}
 (j_1,j_2,j_3)
 &(1,1,1)&(1,1,2)&(1,2,1)&(1,2,2)
 &(2,1,1)&(2,1,2)&(2,2,1)&(2,2,2)\\
 \hline
 j_1-j_2+j_3&1&2&0&1&2&3&1&2.
 \end{array}
\end{equation}
The only zero carrier occurs for $(j_1,j_2,j_3)=(1,2,1)$.
In this case $\xi_1=\xi_3=0$, so
\[
 n=\eta_1-\eta_2+\eta_3-\xi_2.
\]
This proves \textup{(iii)}.
\end{proof}
We now develop the bounds for $I_1$, $I_2$ and $I_3$. 
\subsubsection{Estimates for the cubic functionals}\label{subsubsec:one-new}

By \cref{lem:carrier-separation}, all three terms in $I_1$ involve output frequencies of magnitude at least $K_{q+1}/2$. Summing the absolute coefficients using \eqref{eq:cj-A0-fixed} gives
\begin{equation}\label{eq:one-new-estimate}
 \begin{aligned}
 &\CS_s(w_{q+1},u_q,u_q)+\CS_s(u_q,w_{q+1},u_q)+\CS_s(u_q,u_q,w_{q+1})\\
 &\qquad\leq C_q K_{q+1}^{-s}\|\Theta_{q+1}\|_{\A_x^0}
 \leq C_q K_{q+1}^{-s}\lambda_{q+1}^{\frac13(1-\eps_{q+1})}.
 \end{aligned}
\end{equation}
We summarize the estimate of $I_2$ in the following proposition.
\begin{proposition}\label{prop:two-new}
It holds that
\begin{equation}\label{eq:two-new-estimate}
 \begin{aligned}
 I_2 \leq
 C_q\left[
 K_{q+1}^{-s}\lambda_{q+1}^{\frac23(1-\eps_{q+1})}
 +\lambda_{q+1}^{-\frac13(1-\eps_{q+1})}
 +\lambda_{q+1}^{-\frac13(1-\eps_{q+1})}\mu_{q+1}^{-s}
 \right].
 \end{aligned}
\end{equation}
The low-mode terms are the same-packet pairs in the placements $(1,2)$ and $(2,3)$ of \eqref{eq:two-carrier-placement-table}.
\end{proposition}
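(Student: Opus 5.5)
We split $I_2$ using \cref{lem:carrier-separation}\textup{(ii)}. Expand each of the three placements in \eqref{def:I_2} into packets $w_{q+1}^{(j_1)}, w_{q+1}^{(j_2)}$ (or the corresponding pair of slots), giving finitely many interactions. Two classes remain. The \emph{high} ones have nonzero carrier. The \emph{low} ones are the same-packet pairs in the placements $(1,2)$ and $(2,3)$.

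\textbf{High interactions.} Every output frequency satisfies $|n|\ge K_{q+1}/2$, so $\wt{n}^{-s}\lesssim_s K_{q+1}^{-s}$. After inserting absolute values, the remaining sum over $(n_1,n_2,n_3)$ factorizes into a product of $\ell^1$ norms of $C_t^0$ coefficients. By \eqref{eq:packet-Fourier-formula}, and the fact that at most one summand is nonzero for each $n$, we get $\sum_n\|\widehat{w^{(j)}_{q+1}}(\cdot,n)\|_{C^0_t}\le \|\Theta_{q+1}\|_{\A^0_x}\sum_\xi\|c_{j,q+1}(\cdot,\xi)\|_{C_t^0}$. Together with \eqref{eq:cj-A0-fixed} and \eqref{eq:Theta-A0}, this yields a bound $C_qK_{q+1}^{-s}\|\Theta_{q+1}\|_{\A^0_x}^2\le C_qK_{q+1}^{-s}\lambda_{q+1}^{\frac23(1-\eps_{q+1})}$, which is the first term.

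\textbf{Low interactions.} Take placement $(1,2)$ with $j_1=j_2=j$; placement $(2,3)$ is symmetric. The output is $n=(\eta_1-\eta_2)+(\xi_1-\xi_2)+k_3$ with $|\xi_r|,|k_3|\le M_q$. Bound the $C_t^0$ norm of the product of coefficients by $\widehat\Theta(\eta_1)\widehat\Theta(\eta_2)\|c_j(\cdot,\xi_1)\|\,\|c_j(\cdot,\xi_2)\|\,\|\widehat{u_q}(\cdot,k_3)\|$, using nonnegativity of $\widehat\Theta$ and $0\le h\le1$. Then split according to $r=\eta_1-\eta_2$.

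When $r=0$, bound $\wt{n}^{-s}\le1$ and sum. The $\eta$-sum is exactly $\sum_{\eta_1=\eta_2}\widehat\Theta(\eta_1)\widehat\Theta(\eta_2)=\int\Theta_{q+1}^2\lesssim\lambda_{q+1}^{-\frac13(1-\eps_{q+1})}$ by \eqref{eq:Theta-Q2}. The old sums are bounded by $C_q$ via \eqref{eq:cj-A0-fixed}, which gives the second term.

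When $r\ne0$, we have $r\in\mu_{q+1}\Z\setminus\{0\}$ and $|\xi_1-\xi_2+k_3|\le3M_q<\mu_{q+1}/2$. Hence $\wt{n}\gtrsim\wt{r}$ and $\wt n^{-s}\lesssim_s\wt r^{-s}$. The $\eta$-sum then becomes $\sum_{r\ne0}\wt r^{-s}\widehat{\Theta^2_{q+1}}(r)=\|\Pnz(\Theta_{q+1}^2)\|_{\A^{-s}_x}$, since $\widehat\Theta$ is real, even and nonnegative, so that $\sum_{\eta_1-\eta_2=r}\widehat\Theta(\eta_1)\widehat\Theta(\eta_2)=\widehat{\Theta^2}(r)$. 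Applying \eqref{eq:Theta-Pneq} with $j=2$ bounds this by $\lambda_{q+1}^{-\frac13(1-\eps_{q+1})}\mu_{q+1}^{-s}$, which is the third term.

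\textbf{Main obstacle.} The delicate step is the $r\ne0$ low case. There one must check that the old modulations $\xi_1-\xi_2+k_3$ cannot push the output near zero. This is exactly where $\mu_{q+1}>100M_q$ is used. One must also make sure that inserting absolute values does not destroy the convolution identity; nonnegativity of $\widehat{\Theta}$ handles this. The same argument, with $-\eta_2+\eta_3$ in place of $\eta_1-\eta_2$, covers placement $(2,3)$. Summing finitely many packet choices completes \eqref{eq:two-new-estimate}.
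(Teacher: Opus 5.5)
Your proposal is correct and follows the paper's proof essentially step for step. It uses the same high/low split from \cref{lem:carrier-separation}, the same $C_qK_{q+1}^{-s}\|\Theta_{q+1}\|_{\A^0_x}^2$ bound for the nonzero-carrier terms, and the same $r=0$ versus $r\neq0$ treatment of the same-packet pairs via \eqref{eq:Theta-Q2}, the separation $\mu_{q+1}>100M_q$, and \eqref{eq:Theta-Pneq} with $j=2$; the only cosmetic difference is that you sum the old-frequency variables directly instead of packaging them into the auxiliary sequences $G^{12}_{j,q+1}$ and $G^{23}_{j,q+1}$.
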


\begin{proof}
We begin by writing all signed outputs.

\smallskip
\noindent\emph{Placement $\CS_s(w_{q+1},w_{q+1},u_q)$.}
Choose packet labels $j_1,j_2\in\{1,2\}$.  The signed output is
\begin{equation}\label{eq:two-new-12-output}
 (j_1-j_2)K_{q+1}
 +(\eta_1-\eta_2)
 +(\xi_1-\xi_2)
 +k_3.
\end{equation}
If $j_1\neq j_2$, the carrier is $\pm K_{q+1}$, so the term has high output frequency.  If $j_1=j_2=j$, the carrier vanishes and the slab output is
\[
 r:=\eta_1-\eta_2\in\mu_{q+1}\Z.
\]

\smallskip
\noindent\emph{Placement $\CS_s(w_{q+1},u_q,w_{q+1})$.}
Choose packet labels $j_1,j_3\in\{1,2\}$.  The signed output is
\begin{equation}\label{eq:two-new-13-output}
 (j_1+j_3)K_{q+1}
 +(\eta_1+\eta_3)
 +(\xi_1+\xi_3)
 -k_2.
\end{equation}
The carrier coefficient belongs to $\{2,3,4\}$, so every label choice is high.  No low interaction is possible when both new factors occupy the two nonconjugated slots.

\smallskip
\noindent\emph{Placement $\CS_s(u_q,w_{q+1},w_{q+1})$.}
Choose packet labels $j_2,j_3\in\{1,2\}$.  The signed output is
\begin{equation}\label{eq:two-new-23-output}
 (-j_2+j_3)K_{q+1}
 +(-\eta_2+\eta_3)
 +(-\xi_2+\xi_3)
 +k_1.
\end{equation}
If $j_2\neq j_3$, the carrier is $\pm K_{q+1}$.  If $j_2=j_3=j$, it vanishes and the slab output is
\[
 r:=-\eta_2+\eta_3\in\mu_{q+1}\Z.
\]

All high cases in \eqref{eq:two-new-12-output}--\eqref{eq:two-new-23-output} have output magnitude at least $K_{q+1}/2$ by \cref{lem:carrier-separation}.  There are eight such label-placement combinations: two unequal-label cases in each of the placements $(1,2)$ and $(2,3)$, and four cases in the placement $(1,3)$.  Summing their Fourier coefficients gives
\begin{equation}\label{eq:two-new-high}
 \text{all high-mode terms}
 \leq C_q K_{q+1}^{-s}\|\Theta_{q+1}\|_{\A_x^0}^2
 \leq C_q K_{q+1}^{-s}\lambda_{q+1}^{\frac23(1-\eps_{q+1})}.
\end{equation}

It remains to estimate the four low cases: $j=1,2$ in \eqref{eq:two-new-12-output} and $j=1,2$ in \eqref{eq:two-new-23-output}. For the placement $(1,2)$ and a fixed packet label $j$, define the nonnegative coefficient sum
\begin{equation}\label{eq:G12}
 G_{j,q+1}^{12}(\kappa)
 :=\sum_{\xi_1-\xi_2+k_3=\kappa}
 \left\|
 c_{j,q+1}(\cdot,\xi_1)
 \overline{c_{j,q+1}(\cdot,\xi_2)}
 \widehat{u_q}(\cdot,k_3)
 \right\|_{C_t^0}.
\end{equation}
It is supported in $|\kappa|\leq3M_q$, and
\begin{equation}\label{eq:G12-l1}
 \sum_\kappa G_{j,q+1}^{12}(\kappa)\leq C_q
\end{equation}
by \eqref{eq:cj-A0-fixed}.  The corresponding contribution is bounded by
\begin{equation}\label{eq:low12-sum}
 \CS_s(w_{q+1}^{(j)},w_{q+1}^{(j)},u_q) \leq \sum_{r\in\mu_{q+1}\Z}
 \sum_{\kappa\in\Z}
 \wt{r+\kappa}^{-s}
 G_{j,q+1}^{12}(\kappa)
 Q_{2,q+1}(r),
\end{equation}
where
\begin{equation}\label{eq:Q2-def}
 Q_{2,q+1}(r)
 :=\sum_{\eta_1-\eta_2=r}
 \widehat{\Theta_{q+1}}(\eta_1)\widehat{\Theta_{q+1}}(\eta_2)
 =\widehat{\Theta_{q+1}^2}(r)\geq0.
\end{equation}
Here the identity with $\widehat{\Theta_{q+1}^2}$ uses the evenness of $\widehat{\Theta_{q+1}}$; see \cref{prop:Theta}. For $r=0$, \eqref{eq:Theta-Q2} and \eqref{eq:G12-l1} give
\begin{equation}\label{eq:low12-r0}
 \sum_\kappa\wt\kappa^{-s}G_{j,q+1}^{12}(\kappa)Q_{2,q+1}(0)
 \leq C_q\int_{\T}\Theta_{q+1}^2\,\dd x
 \leq C_q\lambda_{q+1}^{-\frac13(1-\eps_{q+1})}.
\end{equation}
If $r\neq0$, then $|r|\geq\mu_{q+1}$ and $|\kappa|\leq3M_q<3\mu_{q+1}/100$, so
\[
 \wt{r+\kappa}^{-s}\lesssim_s\wt r^{-s}.
\]
Consequently,
\begin{equation}\label{eq:low12-rneq0}
 \begin{aligned}
 &\sum_{r\neq0}\sum_\kappa
 \wt{r+\kappa}^{-s}G_{j,q+1}^{12}(\kappa)Q_{2,q+1}(r)\\
 &\qquad\leq C_q
 \sum_{r\neq0}\wt r^{-s}Q_{2,q+1}(r)
 =C_q\|\Pnz(\Theta_{q+1}^2)\|_{\A_x^{-s}}\\
 &\qquad\leq C_q
 \lambda_{q+1}^{-\frac13(1-\eps_{q+1})}\mu_{q+1}^{-s},
 \end{aligned}
\end{equation}
where the last line follows from \eqref{eq:Theta-Pneq} with $j=2$. For the placement $(2,3)$, define
\begin{equation}\label{eq:G23}
 G_{j,q+1}^{23}(\kappa)
 :=\sum_{-\xi_2+\xi_3+k_1=\kappa}
 \left\|
 \widehat{u_q}(\cdot,k_1)
 \overline{c_{j,q+1}(\cdot,\xi_2)}
 c_{j,q+1}(\cdot,\xi_3)
 \right\|_{C_t^0}.
\end{equation}
Again $G_{j,q+1}^{23}$ is supported in $|\kappa|\leq3M_q$ and has $\ell^1$ size at most $C_q$.  Replacing $r=\eta_1-\eta_2$ by $r=-\eta_2+\eta_3$ gives exactly the same $Q_{2,q+1}(r)$, because again the Fourier coefficients of $\Theta_{q+1}$ are real and even. The estimates \eqref{eq:low12-r0} and \eqref{eq:low12-rneq0} therefore apply verbatim, and hence 
\begin{equation}\label{eq:C23}
\CS_s(u_q,w_{q+1}^{(j)},w_{q+1}^{(j)}) \leq C_q \lambda_{q+1}^{-\frac13(1-\eps_{q+1})} + C_q \lambda_{q+1}^{-\frac13(1-\eps_{q+1})}\mu_{q+1}^{-s}. 
\end{equation}

Finally, summing \eqref{eq:two-new-high}, \eqref{eq:low12-r0}, \eqref{eq:low12-rneq0} and \eqref{eq:C23} proves \eqref{eq:two-new-estimate}.
\end{proof}

Now we look at the terms in $I_3$. The packet carrier table \eqref{eq:three-carrier-table} shows that the unique low carrier is $(1,2,1)$.  We first compute it exactly.

\begin{lemma}\label{lem:exceptional-triad}
We have that
\begin{equation}\label{eq:exceptional-bound}
 \CS_s(w_{q+1}^{(1)},w_{q+1}^{(2)},w_{q+1}^{(1)})
 \leq
 \|E_q\|_{\A^{-s}_{x,t}}
 +C_q\mu_{q+1}^{-s}.
\end{equation}
\end{lemma}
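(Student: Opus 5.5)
We expand $\CS_s(w_{q+1}^{(1)},w_{q+1}^{(2)},w_{q+1}^{(1)})$ using the Fourier formula \eqref{eq:packet-Fourier-formula}. Since $c_{1,q+1}=a_{q+1}\one_{\{\xi=0\}}$ and $c_{2,q+1}=\widehat{b_{q+1}}$, a triple of input frequencies is
\[
n_1=K_{q+1}+\eta_1,\qquad n_2=2K_{q+1}+\eta_2+\xi_2,\qquad n_3=K_{q+1}+\eta_3 .
\]
By the disjointness of the intervals $\eta+[-M_q,M_q]$, each input frequency determines its $(\eta,\xi)$ uniquely. Hence the triple sum in $\CS_s$ is the sum over $(\eta_1,\eta_2,\eta_3,\xi_2)$ of
\[
\wt{\eta_1-\eta_2+\eta_3-\xi_2}^{-s}\,
\|h_{q+1}^3a_{q+1}^2\overline{\widehat{b_{q+1}}(\cdot,\xi_2)}\|_{C_t^0}\,
\prod_{r=1}^{3}\widehat{\Theta_{q+1}}(\eta_r).
\]
Here we use Lemma \ref{lem:carrier-separation}(iii) for the form of the output, and the nonnegativity of $\widehat\Theta$. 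Since $0\le h_{q+1}\le1$, we may drop $h_{q+1}^3$. By \eqref{eq:b-properties}, $a_{q+1}^2\overline{\widehat{b_{q+1}}(t,\xi_2)}=-\sigma\widehat{E_q}(t,-\xi_2)$, so the amplitude factor is exactly $\|\widehat{E_q}(\cdot,-\xi_2)\|_{C_t^0}$.

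Next we group by the slab output $r=\eta_1-\eta_2+\eta_3\in\mu_{q+1}\Z$ and set
\[
Q_3(r)=\sum_{\eta_1-\eta_2+\eta_3=r}\prod\widehat{\Theta_{q+1}}(\eta_r)\ \ge 0 .
\]
Using the evenness of $\widehat\Theta$, this equals $\widehat{\Theta_{q+1}^3}(r)$. The quantity to bound becomes
\[
\sum_{r}\sum_{\kappa}\wt{r+\kappa}^{-s}\|\widehat{E_q}(\cdot,\kappa)\|_{C_t^0}\,\widehat{\Theta_{q+1}^3}(r),
\qquad \kappa=-\xi_2 .
\]

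The $r=0$ term uses \eqref{eq:Theta-Q3}: $Q_3(0)=\int\Theta_{q+1}^3=1$. What remains is exactly $\sum_\kappa\wt\kappa^{-s}\|\widehat{E_q}(\cdot,\kappa)\|_{C_t^0}=\|E_q\|_{\A^{-s}_{x,t}}$, with constant exactly one. This is the delicate point, so no Peetre-type loss may enter here.

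For $r\ne0$ we have $|r|\ge\mu_{q+1}>100M_q\ge100|\kappa|$, so $\wt{r+\kappa}^{-s}\lesssim_s\wt r^{-s}$. This bounds the sum by
\[
C_s\,\|E_q\|_{\A^0_{x,t}}\,\|\Pnz(\Theta_{q+1}^3)\|_{\A^{-s}_x}.
\]
By \eqref{eq:Theta-Pneq} with $j=3$, $\|\Pnz(\Theta_{q+1}^3)\|_{\A^{-s}_x}\lesssim\mu_{q+1}^{-s}$, since the $\lambda$-exponent vanishes for $j=3$. The factor $\|E_q\|_{\A^0_{x,t}}$ is a stage-$q$ constant, so this term is $C_q\mu_{q+1}^{-s}$.

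The only real obstacle is bookkeeping: the absolute values inside $\CS_s$ must collapse exactly to $Q_3(r)$ times the $E_q$ coefficients. This needs two facts. First, each output tuple corresponds to a unique decomposition $(\eta,\xi)$, so no cross terms are double counted. Second, the coefficients $\widehat\Theta$ are nonnegative, so inserting absolute values does not change the convolution identity. Both facts were established above.
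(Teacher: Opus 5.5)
Your proposal is correct and follows essentially the same route as the paper: expand via \eqref{eq:packet-Fourier-formula}, convert $a_{q+1}^2\overline{\widehat{b_{q+1}}}$ into $-\sigma\widehat{E_q}(\cdot,-\xi)$, evaluate the $r=0$ slab output exactly using \eqref{eq:Theta-Q3}, and bound $r\neq0$ via $\wt{r+\kappa}^{-s}\lesssim_s\wt r^{-s}$ together with \eqref{eq:Theta-Pneq} for $j=3$. The only cosmetic differences are these: you discard $h_{q+1}^3$ using $0\le h_{q+1}\le1$, whereas the paper keeps it and uses $h_{q+1}=1$ on the time support of $E_q$; and you make the stage-$q$ constant explicit as $C_s\|E_q\|_{\A^0_{x,t}}$.
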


\begin{proof}
Using \eqref{eq:packets}, the cubic interaction is
\[
 w_{q+1}^{(1)}\overline{w_{q+1}^{(2)}}w_{q+1}^{(1)}
 =h_{q+1}^3a_{q+1}^2\overline{b_{q+1}}\,\Theta_{q+1}^3
 =-\sigma E_q\Theta_{q+1}^3,
\]
where we used $h_{q+1}^3E_q=E_q$ and $a_{q+1}^2\overline{b_{q+1}}=-\sigma E_q$. Set 
\[
J := \CS_s(w_{q+1}^{(1)},w_{q+1}^{(2)},w_{q+1}^{(1)}).
\]
Then \eqref{eq:packet-Fourier-formula} gives
\begin{equation}\label{eq:exceptional-Fourier-sum}
 J = a_{q+1}^2\sum_{\eta_1,\eta_2,\eta_3,\xi}
 \wt{\eta_1-\eta_2+\eta_3-\xi}^{-s}
 \|h_{q+1}^3\overline{\widehat{b_{q+1}}(\cdot,\xi)}\|_{C_t^0}
 \prod_{r=1}^3\widehat{\Theta_{q+1}}(\eta_r).
\end{equation}
From $b_{q+1}=-\sigma a_{q+1}^{-2}\overline{E_q}$,
\begin{equation}\label{eq:b-Fourier-E}
 \overline{\widehat{b_{q+1}}(t,\xi)}
 =-\sigma a_{q+1}^{-2}\widehat{E_q}(t,-\xi).
\end{equation}
Write $k=-\xi$ and
\[
 r=\eta_1-\eta_2+\eta_3\in\mu_{q+1}\Z.
\]
When $r=0$, \eqref{eq:Theta-Q3} implies that the right-hand side of \eqref{eq:exceptional-Fourier-sum} is equal to
\[
 \sum_k\wt k^{-s}\|\widehat{E_q}(\cdot,k)\|_{C_t^0}
 \sum_{\eta_1-\eta_2+\eta_3=0}
 \prod_{r=1}^3\widehat{\Theta_{q+1}}(\eta_r)
 =\|E_q\|_{\A^{-s}_{x,t}}.
\]
If $r\neq0$, then $|r|\geq\mu_{q+1}$ and $|k|\leq M_q<\mu_{q+1}/100$, so
\[
 \wt{r+k}^{-s}\lesssim_s\wt r^{-s}.
\]
Since $\widehat{\Theta_{q+1}}$ is even,
\[
 \sum_{\eta_1-\eta_2+\eta_3=r}
 \prod_{j=1}^3\widehat{\Theta_{q+1}}(\eta_j)
 =\widehat{\Theta_{q+1}^3}(r).
\]
Summing first in $k = -\xi$ and then in the slab variables in \eqref{eq:exceptional-Fourier-sum} therefore yields
\begin{align*}
     J &\leq \|E_q\|_{\A^{-s}_{x,t}} + C_q\sum_{r\neq0}\wt r^{-s}\widehat{\Theta_{q+1}^3}(r)
 =\|E_q\|_{\A^{-s}_{x,t}}+ C_q\|\Pnz(\Theta_{q+1}^3)\|_{\A_x^{-s}}\\ 
 &\leq \|E_q\|_{\A^{-s}_{x,t}}+ C_q\mu_{q+1}^{-s} 
\end{align*}
by \eqref{eq:Theta-Pneq}.  This proves \eqref{eq:exceptional-bound}.
\end{proof}

Every other packet triple has a nonzero carrier.  Its Fourier support lies at frequencies of magnitude at least $K_{q+1}/2$, and its coefficient sum is bounded by the product of three slab $\A^0$ masses and fixed old-amplitude bounds. Therefore
\begin{equation}\label{eq:pure-high}
 \sum_{(j_1,j_2,j_3)\neq(1,2,1)}
 \CS_s(w_{q+1}^{(j_1)},w_{q+1}^{(j_2)},w_{q+1}^{(j_3)})
 \leq C_q K_{q+1}^{-s}\|\Theta_{q+1}\|_{\A_x^0}^3
 \leq C_q K_{q+1}^{-s}\lambda_{q+1}^{1-\eps_{q+1}},
\end{equation}
where we used \eqref{eq:modulation-Wiener}. Combining \eqref{eq:exceptional-bound} and \eqref{eq:pure-high} gives
\begin{equation}\label{eq:pure-new-total}
 I_3 \leq
 \|E_q\|_{\A^{-s}_{x,t}}
 +C_q\left(\mu_{q+1}^{-s}+K_{q+1}^{-s}\lambda_{q+1}^{1-\eps_{q+1}}\right).
\end{equation}

\begin{proposition}\label{prop:absolute-increment}
Under the assumptions in \cref{lem:carrier-separation}, we have
\begin{equation}\label{eq:absolute-increment}
 \CS_s(u_{q+1})
 \leq
 \CS_s(u_q)
 +\|E_q\|_{\A^{-s}_{x,t}}
 +C_q\left[
 \mu_{q+1}^{-s}
 +\lambda_{q+1}^{-\frac13(1-\eps_{q+1})}
 +K_{q+1}^{-s}\lambda_{q+1}^{1-\eps_{q+1}}
 \right].
\end{equation}
\end{proposition}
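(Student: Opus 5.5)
The plan is to assemble the bound from the eight-placement expansion \eqref{eq:CS-eight}, which already isolates $\CS_s(u_q,u_q,u_q)=\CS_s(u_q)$. It then suffices to bound $I_1+I_2+I_3$ by $\|E_q\|_{\A^{-s}_{x,t}}$ plus the bracketed error terms. Before using \eqref{eq:CS-eight}, I would justify it. Each placement $\CS_s(f_1,f_2,f_3)$ is multilinear in the Fourier coefficients. Splitting $\widehat{u_{q+1}}=\widehat{u_q}+\widehat{w_{q+1}}$ in each slot and applying the triangle inequality inside the $C_t^0$ norm gives the eight-term bound. Because the supports of $\widehat{u_q}$ and $\widehat{w_{q+1}}$ are disjoint by \eqref{eq:block-step}, this split is actually an exact decomposition of the frequency sum. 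Likewise, splitting each $w_{q+1}=w^{(1)}_{q+1}+w^{(2)}_{q+1}$ inside $I_2$ and $I_3$ only costs the triangle inequality, which is what \cref{prop:two-new} and \eqref{eq:pure-new-total} already use.

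Next I would insert the three available estimates. For $I_1$, use \eqref{eq:one-new-estimate}, which gives $C_qK_{q+1}^{-s}\lambda_{q+1}^{\frac13(1-\eps_{q+1})}$. For $I_2$, use \cref{prop:two-new}. For $I_3$, use \eqref{eq:pure-new-total}, which supplies the single sharp term $\|E_q\|_{\A^{-s}_{x,t}}$ with constant exactly one, coming from \cref{lem:exceptional-triad}. This exact constant is essential for the inductive bound \eqref{eq:CS-induction-main}, and it is the only place where the constant matters.

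It then remains to absorb the remaining powers. Since $\lambda_{q+1}\geq1$ and $1-\eps_{q+1}>0$, both $\lambda_{q+1}^{\frac13(1-\eps_{q+1})}$ and $\lambda_{q+1}^{\frac23(1-\eps_{q+1})}$ are at most $\lambda_{q+1}^{1-\eps_{q+1}}$, so the $K_{q+1}^{-s}$ terms from $I_1$ and $I_2$ are dominated by $K_{q+1}^{-s}\lambda_{q+1}^{1-\eps_{q+1}}$. The term $\lambda_{q+1}^{-\frac13(1-\eps_{q+1})}\mu_{q+1}^{-s}$ is at most $\lambda_{q+1}^{-\frac13(1-\eps_{q+1})}$. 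The $\mu_{q+1}^{-s}$ term from $I_3$ appears directly in the bracket. Collecting these and enlarging $C_q$ gives \eqref{eq:absolute-increment}.

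I do not expect a real obstacle, since the substantive work is done in \cref{lem:exceptional-triad} and \cref{prop:two-new}. The one point needing care is to check that the high-frequency bound \eqref{eq:pure-high} legitimately uses \eqref{eq:modulation-Wiener} at the level of the absolute functional $\CS_s$ rather than the norm of the product. This holds because every triple in those terms has output magnitude at least $K_{q+1}/2$ by \cref{lem:carrier-separation}(iii). Hence the weight is at most $CK_{q+1}^{-s}$, and the unweighted sum of absolute coefficient products factors as a product of $\A^0$ masses, bounded via \eqref{eq:cj-A0-fixed} and \eqref{eq:Theta-A0}.
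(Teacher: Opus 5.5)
Your proposal is correct and follows essentially the same route as the paper: you insert \eqref{eq:one-new-estimate}, \cref{prop:two-new}, and \eqref{eq:pure-new-total} into \eqref{eq:CS-eight}. You then absorb $K_{q+1}^{-s}\lambda_{q+1}^{\frac13(1-\eps_{q+1})}$ and $K_{q+1}^{-s}\lambda_{q+1}^{\frac23(1-\eps_{q+1})}$ into $K_{q+1}^{-s}\lambda_{q+1}^{1-\eps_{q+1}}$, and $\lambda_{q+1}^{-\frac13(1-\eps_{q+1})}\mu_{q+1}^{-s}$ into $\lambda_{q+1}^{-\frac13(1-\eps_{q+1})}$. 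Your extra remarks are accurate clarifications rather than a different argument: the eight-placement split is exact because the Fourier supports are disjoint, and \eqref{eq:pure-high} really rests on the weight bound $\wt{n}^{-s}\leq 2^sK_{q+1}^{-s}$ together with factorization into $\A^0$ masses, not on \eqref{eq:modulation-Wiener} as a norm estimate.
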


\begin{proof}
Insert \eqref{eq:one-new-estimate}, \eqref{eq:two-new-estimate}, and \eqref{eq:pure-new-total} into the expansion \eqref{eq:CS-eight}.  The terms
\[
 K_{q+1}^{-s}\lambda_{q+1}^{\frac13(1-\eps_{q+1})},
 \qquad
 K_{q+1}^{-s}\lambda_{q+1}^{\frac23(1-\eps_{q+1})}
\]
are dominated by $K_{q+1}^{-s}\lambda_{q+1}^{1-\eps_{q+1}}$ for $\lambda_{q+1}\geq1$.  The term
$\lambda_{q+1}^{-\frac13(1-\eps_{q+1})}\mu_{q+1}^{-s}$ is dominated by $\lambda_{q+1}^{-\frac13(1-\eps_{q+1})}$.  This gives \eqref{eq:absolute-increment}.
\end{proof}

\subsection{Parameter choice and completion of the induction}\label{subsec:parameter-choice}

We now finish the proof of \cref{prop:main-inductive}.

\begin{proof}[Proof of \cref{prop:main-inductive}]
We start by showing the base case. Choose a nonzero real function $\chi_0\in C_c^\infty((7/16,9/16))$ and a small scalar $\eta>0$. Let
\begin{equation}\label{eq:seed-u0}
 u_0(t,x)=\eta\chi_0(t)e^{2\pi ix}
\end{equation}
and
\begin{equation}\label{eq:seed-E0}
 E_0=i\partial_tu_0+\partial_x^2u_0+\sigma|u_0|^2u_0.
\end{equation}
Explicitly,
\[
 E_0
 =\eta(i\chi_0'-4\pi^2\chi_0)e^{2\pi ix}
 +\sigma\eta^3|\chi_0|^2\chi_0e^{2\pi ix}.
\]
Both functions are compactly supported smooth trigonometric polynomials with spatial frequency $1$.  Choose $\eta$ small enough that
\[
 \|E_0\|_{\A^{-s}_{x,t}}\leq\delta_0.
\]
Set $M_0=10$ and, consistently with \eqref{eq:Iq-explicit}, set
\begin{equation}\label{eq:I0-explicit}
 I_0=\left[\frac7{16},\frac9{16}\right].
\end{equation}
The identity \eqref{eq:seed-E0} is the relaxed equation
\eqref{eq:relaxed-main-prop} at stage $q=0$. The interval
\eqref{eq:I0-explicit} is exactly \eqref{eq:Iq-explicit} for $q=0$, and
$\chi_0\in C_c^\infty((7/16,9/16))$ ensures that both temporal supports
are contained in $I_0$.  Since $u_0$ and $E_0$ have only spatial
frequency $1$ and $M_0=10$, the support condition
\eqref{eq:uq-Eq-support} also holds.  The choice of $\eta$ gives
\eqref{eq:error-budget-main}.  Finally,
$m_0=\|u_0\|_{C_t^0L_x^1}>0$, the base case of
\eqref{eq:CS-induction-main} is the identity
$\CS_s(u_0)\leq\CS_s(u_0)$, and \eqref{eq:nontriviality-main} follows
from $\|u_0-u_0\|_{C_t^0L_x^1}=0$.

Assume the sequence has been constructed through stage $q$.  Choose $A_{q+1}$ by \eqref{eq:A-choice}, then define $a_{q+1}$ and $b_{q+1}$ by \eqref{eq:a-b-choice}.  At this point every old-stage norm and every amplitude norm is fixed.  Next fix the prescribed exponent $\eps_{q+1} = 2^{-(q+1)}$. Choose
\[
 \lambda_{q+1}=2^{L_{q+1}}
\]
with $L_{q+1}$ divisible by $2^{q+1}$.  This makes $\lambda_{q+1}^{\eps_{q+1}}$ and $\lambda_{q+1}^{1-\eps_{q+1}}$ integers.  Enlarge $L_{q+1}$ successively until all of the following conditions hold:
\begin{enumerate}[label=\textup{(\arabic*)}]
 \item the slab normalization in \cref{prop:Theta} is valid;
 \item $\mu_{q+1} = \lambda_{q+1}^{\eps_{q+1}}>100M_q$; \label{parameter_req_2}
 \item $\lambda_{q+1}^{-\beta}<2^{-q-10}$ and
 \[
  \sum_{j=1}^{q+1}\lambda_j^{-\beta}<\frac1{16}; \label{parameter_req_3}
 \]
 \item the right side of \eqref{eq:one-step-residual} is below $\delta_{q+1}= 2^{-q-201}$; \label{parameter_req_4}
 \item the full remainder term $C_q[\cdots]$ in \eqref{eq:absolute-increment} is below $\gamma_{q+1}$;\label{parameter_req_5}
 \item the estimates \eqref{eq:w-Halpha} and \eqref{eq:w-Lp}, with $\alpha=\alpha_{q+1}$ and $p=p_{q+1}$, are each below $2^{-q-11}$;\label{parameter_req_6}
 \item the $L^1$ estimate \eqref{eq:w-L1} is below $2^{-q-3}m_0$. \label{parameter_req_7}
\end{enumerate}
These requirements are compatible.  Indeed, the exponents on the right-hand side of \eqref{eq:one-step-residual} are negative by \cref{prop:one-step-residual}. The remainder terms in $\eqref{eq:absolute-increment}$ contain the negative powers
\[
 -s\eps_{q+1},
 \qquad
 -\frac13(1-\eps_{q+1}),
 \qquad
 1-\eps_{q+1}-(1+\eps_{q+1})s.
\]
The Sobolev and $L^{p_{q+1}}$ exponents are negative by \eqref{eq:H-diagonal-exponent} and \eqref{eq:Lp-diagonal-exponent}.  Finally, $\lambda_{q+1}^{-\beta}$ and the $L^1$ increment tend to zero as $\lambda_{q+1}\to\infty$.

Now, construct $h_{q+1}$, $\Theta_{q+1}$, and $w_{q+1}$ as in
\cref{subsec:one-step}. The solution to the equation
\eqref{eq:relaxed-main-prop} at stage $q+1$ is then given by
\eqref{eq:packets}, \eqref{eq:relaxed-next} and \eqref{eq:error-decomp}. The identity \eqref{eq:Iqplus1-explicit-step} is \eqref{eq:Iq-explicit} with $q$
replaced by $q+1$.  Together with the cutoff support in
\eqref{eq:h-properties} and the temporal conclusion of
\cref{lem:new-error-support}, it gives
\[
 \supp_tu_{q+1}\cup\supp_tE_{q+1}\subset I_{q+1}.
\]
Moreover, since the first inequality in the parameter requirement \ref{parameter_req_3} is imposed at every stage,
\[
 \sum_{j=1}^{\infty}\lambda_j^{-\beta}
 <\sum_{j=1}^{\infty}2^{-j-9}
 =2^{-9}<\frac1{16},
\]
which yields \eqref{eq:time-sum}.

The divisibility by $2^{q+1}$ imposed on $L_{q+1}$ ensures that the parameters in
\eqref{eq:mu-nu-K} are all integers. Parameter requirement~\ref{parameter_req_2}
gives the inequality in \eqref{eq:old-new-separation-main}, while
$M_{q+1}=5K_{q+1}$ holds by definition in
\eqref{eq:one-step-frequency-definitions}.
Furthermore, \eqref{eq:packet-support-1}--\eqref{eq:packet-support-2}
give the exact block inclusion \eqref{eq:exact-block}. With this choice
of $M_{q+1}$, the bounds on the explicit packet sets in the proof of
\cref{lem:packet-support} give the annular localization
\eqref{eq:block-annulus}. The blocks are pairwise disjoint because
\eqref{eq:block-annulus} places the new block above $2M_q$, whereas the
inductively constructed earlier blocks lie below $M_q$. Finally,
\eqref{eq:new-error-support}, together with $M_{q+1}=5K_{q+1}$, proves \eqref{eq:uq-Eq-support} at stage $q+1$.

The parameter requirement \ref{parameter_req_6} together with \eqref{eq:w-Halpha} and
\eqref{eq:w-Lp} gives \eqref{eq:small-increments-main}, while the requirement \ref{parameter_req_7}
and \eqref{eq:w-L1} give \eqref{eq:L1-budget-main}.  The
requirement \ref{parameter_req_4} and \eqref{eq:one-step-residual} give
\eqref{eq:error-budget-main} at stage $q+1$.  By
\eqref{eq:absolute-increment}, the requirement \ref{parameter_req_5}, and the inductive
bound \eqref{eq:error-budget-main} at stage $q$,
\[
 \CS_s(u_{q+1})
 \leq\CS_s(u_q)+\delta_q+\gamma_{q+1}
 \leq
 \CS_s(u_0)+\sum_{j=0}^{q}(\delta_j+\gamma_{j+1}),
\]
which is \eqref{eq:CS-induction-main} at stage $q+1$.  Finally,
\[
 \|u_{q+1}-u_0\|_{C_t^0L_x^1}
 \leq\sum_{j=0}^q\|w_{j+1}\|_{C_t^0L_x^1}
 <\sum_{j=0}^q2^{-j-3}m_0
 <\frac14m_0<\frac12m_0,
\]
which is \eqref{eq:nontriviality-main}.  The induction is then complete.
\end{proof}

\section*{Acknowledgments}
The author is grateful to Xiaoan Shen and Christof Sparber for stimulating discussions on the cubic NLS equation. The author used OpenAI's ChatGPT as an interactive aid in checking portions of the mathematical argument and in editing the exposition. The author takes full responsibility for the content.


\begin{thebibliography}{99}

\bibitem{ABGN}
E.~Ashkarian, A.~Bhargava, N.~Gismondi, and M.~Novack,
\emph{Intermittent singular solutions of the stationary 2D Navier--Stokes equations in sharp Sobolev spaces},
arXiv:2506.00841, 2025.
\bibitem{Bourgain}
J.~Bourgain,
\emph{Fourier transform restriction phenomena for certain lattice subsets and applications to nonlinear evolution equations. I. Schr\"odinger equations},
Geom. Funct. Anal. \textbf{3} (1993), no.~2, 107--156.

\bibitem{BuckmasterVicol}
T.~Buckmaster and V.~Vicol,
\emph{Nonuniqueness of weak solutions to the Navier--Stokes equation},
Ann. of Math. (2) \textbf{189} (2019), no.~1, 101--144.

\bibitem{CheskidovHou}
A.~Cheskidov and H.~Hou,
\emph{On non-uniqueness of mild solutions and stationary singular solutions to the Navier--Stokes equations},
arXiv:2603.03666, 2026.

\bibitem{CheskidovLuo}
A.~Cheskidov and X.~Luo,
\emph{Sharp nonuniqueness for the Navier--Stokes equations},
Invent. Math. \textbf{229} (2022), no.~3, 987--1054.

\bibitem{Christ}
M.~Christ,
\emph{Nonuniqueness of weak solutions of the nonlinear Schr\"odinger equation},
arXiv:math/0503366, 2005.

\bibitem{CollianderOh}
J.~Colliander and T.~Oh,
\emph{Almost sure well-posedness of the cubic nonlinear Schr\"odinger equation below $L^2(\T)$},
Duke Math. J. \textbf{161} (2012), no.~3, 367--414.

\bibitem{DeLellisSzekelyhidi}
C.~De Lellis and L.~Sz\'ekelyhidi, Jr.,
\emph{The Euler equations as a differential inclusion},
Ann. of Math. (2) \textbf{170} (2009), no.~3, 1417--1436.

\bibitem{GMPR}
N.~Gismondi, K.~Ma, M.~Pathak, and A.~F.~Radu,
\emph{Non-unique solutions to the periodic gKdV equation},
arXiv:2606.06916, 2026.

\bibitem{Gromov}
M.~Gromov,
\emph{Partial differential relations},
Ergebnisse der Mathematik und ihrer Grenzgebiete (3), vol.~9,
Springer-Verlag, Berlin, 1986.

\bibitem{Gross}
E.~P.~Gross,
\emph{Structure of a quantized vortex in boson systems},
Nuovo Cimento (10) \textbf{20} (1961), 454--477.

\bibitem{GKO}
Z.~Guo, S.~Kwon, and T.~Oh,
\emph{Poincar\'e--Dulac normal form reduction for unconditional well-posedness of the periodic cubic NLS},
Comm. Math. Phys. \textbf{322} (2013), no.~1, 19--48.

\bibitem{GuoOh}
Z.~Guo and T.~Oh,
\emph{Non-existence of solutions for the periodic cubic NLS below $L^2$},
Int. Math. Res. Not. IMRN \textbf{2018} (2018), no.~6, 1656--1729.

\bibitem{HasegawaTappertI}
A.~Hasegawa and F.~Tappert,
\emph{Transmission of stationary nonlinear optical pulses in dispersive dielectric fibers. I. Anomalous dispersion},
Appl. Phys. Lett. \textbf{23} (1973), no.~3, 142--144.

\bibitem{HasegawaTappertII}
A.~Hasegawa and F.~Tappert,
\emph{Transmission of stationary nonlinear optical pulses in dispersive dielectric fibers. II. Normal dispersion},
Appl. Phys. Lett. \textbf{23} (1973), no.~4, 171--172.
\bibitem{Isett}
P.~Isett,
\emph{A proof of Onsager's conjecture},
Ann. of Math. (2) \textbf{188} (2018), no.~3, 871--963.
\bibitem{Katznelson}
Y.~Katznelson,
\emph{An introduction to harmonic analysis},
third ed., Cambridge Mathematical Library, Cambridge University Press, Cambridge, 2004.

\bibitem{Kuiper}
N.~H.~Kuiper,
\emph{On $C^1$-isometric imbeddings. I, II},
Nederl. Akad. Wetensch. Proc. Ser. A \textbf{58} = Indag. Math. \textbf{17} (1955),
545--556 and 683--689.

\bibitem{Molinet}
L.~Molinet,
\emph{On ill-posedness for the one-dimensional periodic cubic Schr\"odinger equation},
Math. Res. Lett. \textbf{16} (2009), no.~1, 111--120.

\bibitem{Nash}
J.~Nash,
\emph{$C^1$ isometric imbeddings},
Ann. of Math. (2) \textbf{60} (1954), no.~3, 383--396.

\bibitem{OhWangGWP}
T.~Oh and Y.~Wang,
\emph{Global well-posedness of the one-dimensional cubic nonlinear Schr\"odinger equation in almost critical spaces},
J. Differential Equations \textbf{269} (2020), no.~1, 612--640.

\bibitem{OhWangNF}
T.~Oh and Y.~Wang,
\emph{Normal form approach to the one-dimensional periodic cubic nonlinear Schr\"odinger equation in almost critical Fourier--Lebesgue spaces},
J. Anal. Math. \textbf{143} (2021), no.~2, 723--762.

\bibitem{Pitaevskii}
L.~P.~Pitaevskii,
\emph{Vortex lines in an imperfect Bose gas},
Sov. Phys. JETP \textbf{13} (1961), no.~2, 451--454.

\bibitem{Tsutsumi}
Y.~Tsutsumi,
\emph{$L^2$-solutions for nonlinear Schr\"odinger equations and nonlinear groups},
Funkcial. Ekvac. \textbf{30} (1987), 115--125.

\bibitem{ZakharovWaterWaves}
V.~E.~Zakharov,
\emph{Stability of periodic waves of finite amplitude on the surface of a deep fluid},
J. Appl. Mech. Tech. Phys. \textbf{9} (1968), no.~2, 190--194.

\bibitem{ZakharovShabatFocusing}
V.~E.~Zakharov and A.~B.~Shabat,
\emph{Exact theory of two-dimensional self-focusing and one-dimensional self-modulation of waves in nonlinear media},
Sov. Phys. JETP \textbf{34} (1972), no.~1, 62--69.

\bibitem{ZakharovShabatDefocusing}
V.~E.~Zakharov and A.~B.~Shabat,
\emph{Interaction between solitons in a stable medium},
Sov. Phys. JETP \textbf{37} (1973), no.~5, 823--828.

\bibitem{pathak2026nontrivial}
M.~Pathak,
\emph{Nontrivial weak solutions of the stationary KdV equation
in sharp \(L^p\) spaces},
arXiv preprint arXiv:2603.12555, 2026.

\bibitem{GismondiSlabs}
N.~Gismondi,
\emph{Nontrivial integrable weak stationary solutions to
active scalar equations with non-odd drift},
arXiv:2601.14592, 2026.


\end{thebibliography}
\end{document}